\newcommand{\bs}{\boldsymbol}
\newcommand{\mrm}{\mathrm}
\def\Xint#1{\mathchoice
{\XXint\displaystyle\textstyle{#1}}%
{\XXint\textstyle\scriptstyle{#1}}%
{\XXint\scriptstyle\scriptscriptstyle{#1}}%
{\XXint\scriptscriptstyle\scriptscriptstyle{#1}}%
\!\int}
\def\XXint#1#2#3{{\setbox0=\hbox{$#1{#2#3}{\int}$ }
	\vcenter{\hbox{$#2#3$ }}\kern-.6\wd0}}
\def\dashint{\Xint-}
\def\usigma{\undertilde{\sigma}}
\def\utau{\undertilde{\tau}}
\def\uphi{\undertilde{\phi}}
\def\upsi{\undertilde{\psi}}
\def\uzeta{\undertilde{\zeta}}
\def\ugamma{\undertilde{\gamma}}
\def\uf{\undertilde{f}}
\def\uH{\undertilde{H}}
\def\un{\undertilde{n}}
\def\ur{\undertilde{r}}
\def\us{\undertilde{s}}
\def\ut{\undertilde{t}}
\def\up{\undertilde{p}}
\def\ux{\undertilde{x}}
\def\uL{\undertilde{L}}
\def\rot{{\rm rot}}
\def\curl{{\rm curl}}
\def\dv{{\rm div}}
\def\uH{\undertilde{H}}
\def\uL{\undertilde{L}}
\def\usigma{\undertilde{\sigma}}
\def\utau{\undertilde{\tau}}
\def\uphi{\undertilde{\phi}}
\def\upsi{\undertilde{\psi}}
\def\uzeta{\undertilde{\zeta}}
\def\ugamma{\undertilde{\gamma}}
\def\uf{\undertilde{f}}
\def\up{\undertilde{p}}
\def\rot{{\rm rot}}
\def\curl{{\rm curl}}
\def\dv{{\rm div}}
\newtheorem{theorem}{Theorem}
\newtheorem{remark}[theorem]{Remark}
\newtheorem{lemma}[theorem]{Lemma}
\newtheorem{definition}[theorem]{Definition}
\newcounter{mnote}
\let\oldmarginpar\marginpar
\renewcommand\marginpar[1]{\-\oldmarginpar[\raggedleft\footnotesize #1]%
{\raggedright\footnotesize #1}}
\begin{document}

%题目+作者+单位+邮箱+致谢词+关键词
\title[Polynomial de Rham complex on general quadrilateral grids]{Lowest-degree piecewise polynomial de Rham complex on general quadrilateral grids}

\author{Qimeng Quan, Xia Ji, Shuo Zhang}

%
%
%address and email
\address{University of Chinese Academy of Sciences, Beijing 100190, People's Republic of China}
\email{quanqimeng@lsec.cc.ac.cn}
\address{LSEC, Institute of Computational Mathematics and Scientific/Engineering Computing, Academy of Mathematics and System Sciences, Chinese Academy of Sciences, Beijing 100190, People's Republic of China}
\email{jixia@lsec.cc.ac.cn}
\address{LSEC, Institute of Computational Mathematics and Scientific/Engineering Computing, Academy of Mathematics and System Sciences, Chinese Academy of Sciences, Beijing 100190, People's Republic of China}
\email{szhang@lsec.cc.ac.cn}

\thanks{The research of X. Ji is supported by the National Natural Science Foundation of China under grants 91630313 and 11971468, and National Centre for Mathematics and Interdisciplinary Sciences, CAS. The research of S. Zhang is supported partially by the National Natural Science Foundation of China with Grant Nos. 11471026 and 11871465 and National Centre for Mathematics and Interdisciplinary Sciences, CAS}

%% Keywords
\keywords{nonconforming finite element; de Rham complex; general quadrilateral grids}

%% Mathematical classification (2010)
\subjclass{65N35; 15A15}

\begin{abstract} 
This paper is devoted to the construction of finite elements on grids that consist of general quadrilaterals not limited in parallelograms. Two finite elements defined as Ciarlet's triple are established for the $H^1$ and $H(\rm rot)$ elliptic problems, respectively. An $\mathcal{O}(h)$ order convergence rate in energy norm for both of them and an $\mathcal{O}(h^2)$ order convergence in $L^2$ norm for the $H^1$ scheme are proved under the asymptotic-parallelogram assumption on the grids. Further, the two finite element spaces on general quadrilateral grids, together with the space of piecewise constant functions, formulate a discretized de Rham complex.

The finite element spaces consist of piecewise polynomial functions, and, thus, are nonconforming on general quadrilateral grids. Indeed, a rigorous analysis is given in this paper that it is impossible to construct a practically useful finite element defined as Ciarlet's triple that can formulate a finite element space which consists of continuous piecewise polynomial functions on a grid that may include arbitrary quadrilaterals.
\end{abstract}

\maketitle
%\tableofcontents
%
%
%正文
\section{Introduction}
\label{intro}

There has been a long history on the study of finite element methods on general quadrilateral grids. Many conforming and nonconforming finite elements have been established for various model problems. A classical strategy for constructing quadrilateral elements is to utilize isoparametric technique (cf., e.g., \cite{ciarlet2002finite}). With this strategy, one begins with a given shape function space on a reference square and a class of bilinear transforms (or Piola transforms, etc.), the finite element on any convex quadrilateral cell can be constructed correspondingly. Great success has been achieved via this approach, particularly in constructing conforming finite element spaces; we refer to \cite{arnold2002approximation,arnold2005quadrilateral,falk2011hexahedral} for more details. On the other hand, a solid difficulty of these methods, as discussed in, e.g., Zhang \cite{zhang2004nested}, is that one will encounter the problem of rational function integration in practical numerical computation due to non-constant Jacobian determinants and inverse Jacobian matrices. The same difficulty may happen for theoretical issues. We take the construction of discretized differential complexes for example. This is one of the most fundamental structural feature of the finite element schemes and has been a central topic of the finite element methods during the passed decades. If we consider the fundamental de Rham complex which reads in two dimension
\begin{equation}\label{eq:dr2d}
\mathbb{R}\xrightarrow{\rm inclusion} H^1  \xrightarrow{\nabla}  H(\rot) \xrightarrow{\rot} L^2,
\end{equation}
we have to pay extra attention to the different but relevant Jacobian matrices for the spaces, respectively. We are thus motivated to study finite element schemes with piecewise polynomials on every cell which possess clearer structure and more friendly implementation.
~\\

There have been various finite element schemes on general quadrilateral grids with piecewise polynomials; we refer to, e.g., \cite{rannacher1992simple,jeon2013class,park2003p,zhang2016stable} for details. It is worthy of attention that all these aforementioned finite element spaces are relatively nonconforming ones. Based on this observation, we prove in Section \ref{sec:geo} that, roughly speaking, it is impossible to construct a practically useful finite element whose shape functions are always piecewise polynomials and which can form conforming subspaces on a grid that consists of arbitrary quadrilaterals rather than parallelograms only. We thus do not seek to construct conforming elements in this paper; instead, hinted by the construction of discretized de Rham complex, we seek to construct so-called quasi-conforming ones. Namely, as to the finite element for $H^1$ problem, continuity is imposed on the vertices of the grids, and as to that for $H(\rot)$ problem, continuity is imposed on the tangential average along the interior edges. With respect to continuity restriction of such type, two lowest-degree finite elements are constructed on arbitrary quadrilateral grids. Together with the space of piecewise constants, the two newly constructed finite element spaces formulate a discretized de Rham complex.
~\\

Different from most existing nonconforming elements, the newly designed finite element for the $H^1$ problem does not pass the patch test on general quadrilateral grids. As it is conforming on parallelogram patches and recovers the bilinear element on rectangular patches, its moduli of continuity can be of $\mathcal{O}(h)$ order on a $\mathcal{O}(h^2)$ asymptotic-parallelogram grid. We note that this $\mathcal{O}(h^2)$ asymptotic-parallelogram assumption is quite frequently used, if not standard, in the analysis of the nonconforming finite elements. Further, the finite element space does not contain a nontrivial conforming subspace, and the standard duality arguments (c.f., e.g.,\cite{brenner2007mathematical,shi1984convergence}) can not be directly used. With the help of the specific commutative diagram and stability of the interpolator we establish in the sequel, however, we finally manage to prove that $\mathcal{O}(h^2)$ order in $L^2$ norm hinted by the similar technique in \cite{zeng2019optimal}. We have to point out that, though the $H(\rot)$ element designed is also nonconforming, different from existing ones for which higher regularity assumption than general is assumed for a same convergence rate (cf., e.g., \cite{shi2009low}), by the aid of this $\mathcal{O}(h^2)$ asymptotic-parallelogram assumption, we can prove the optimal convergence rate in energy norm for the proposed elements with the proper regularity assumptions of the exact solutions.
~\\

This paper is relevant to but different from some existing works on the discussed topic. For example, discretized de Rham complex was studied in \cite{arbogast2018direct}, where however rational functions are used together with polynomials for conformity; similarly, both polynomials and rational functions are used in \cite{arbogast2016two} on quadrilateral grids; \cite{bochev2008rehabilitation} saved the loss of convergence for $H(\dv)$ on non-affine quadrilateral grids but it is based on a mimetic divergence operator rather than an original one; \cite{dubach2009pseudo} studied finite elements that can pass the patch test, which can not be fulfilled by this lowest-degree finite element. We finally remark that, to our best knowledge, the first finite element complex with piecewise polynomials on general quadrilateral grids can be found in \cite{zhang2016stable}. The finite elements in \cite{zhang2016stable} are in some sense of the lowest degree for the Stokes complex, but they can pass the patch test. A comparision between the complexes in \cite{zhang2016stable} and in the present paper can thus be an interesting topic.
~\\

The remaining of the paper is organized as follows. In the remaining of this section, we introduce some necessary notations. In Section \ref{sec:geo}, we introduce some geometrical features of the grids. We will particularly prove that, shortly speaking, no practically useful conforming element with piecewise polynomials can be constructed for general quadrilateral grids. In Section \ref{sec:a sequence FEs}, three finite elements are introduced with a commutative diagram. The approximation error is analyzed by the technique of combining the classical Taylor expansion procedure and a specific commutative property. In Section \ref{sec:NFEs and mc}, the modulus of the continuity of the finite element functions are given. Then in Section \ref{sec:FE schemes for model problems}, the performance of the finite elements are studied for the $H^1$ and $H(\rot)$ problems; both theoretical analysis and numerical verifications are given. Some conclusions and comments are given in Section \ref{sec:concluding remarks}.
~\\

\paragraph{\bf Notations}

In this paper, conventional notations for the Sobolev spaces and grid-related quantities will be used. Let $\Omega\subset \mathbb{R}^2$ be a simple connected Lipschitz domain and $\Gamma = \partial\Omega$ be the piecewise boundary with $\un$ the outward unit normal vector and $\ut$ the counterclockwise unit tangential vector, ``$\undertilde{\cdot}$" representing the vector valued quantities. Denote by
$H^{m}(\Omega)$ and
$H^{m}_{0}(\Omega)$
the standard Sobolev spaces equipped with the norm $\|\cdot\|_{m,\Omega}$ and seminorm $|\cdot|_{m,\Omega}$ as usual, and $L^2_0(\Omega):=\{q\in L^2(\Omega):\int_{\Omega} q \ \mathrm{d}x=0\}$. We also denote by $\uL^2(\Omega)=(L^{2}(\Omega))^2$ , $\uH^m(\Omega)=(H^{m}(\Omega))^2$ and $\uH{}_{0}^{m}(\Omega)=(H^{m}_0(\Omega))^2$.
The inner product of $L^2$ and $\uL^2$ is denoted by $(\cdot,\cdot)$ on the domain $\Omega$.
We define two forms of rotation operator in two-dimensional case by
\begin{align*}
& \text{Given a vector}\
\usigma(x_{1},x_{2})
=(\sigma_{1},\sigma_{2})^{T}    &
\mathrm{rot}\usigma&=
\frac{\partial \sigma_{2}}{\partial x_{1}}-
\frac{\partial \sigma_{1}}{\partial x_{2}} \\
& \text{Given a scalar function}\
\sigma=\sigma(x_{1},x_{2})      &
\undertilde{\mathrm{curl}}\sigma&=
(\frac{\partial\sigma}{\partial x_{2}},
-\frac{\partial\sigma}{\partial x_{1}})^{T}.
\end{align*}
Superscript $T$ indicates transposition of vector or matrix as usual.
We also use these notations to denote Sobolev spaces
$H(\mathrm{rot},\Omega)=
\{\usigma|\usigma\in \uL^2(\Omega),\mathrm{rot}\usigma\in L^{2}(\Omega)\}
$
and
$
H_{0}(\mathrm{rot},\Omega)=\{\usigma|\usigma\in H(\mathrm{rot},\Omega), \usigma\cdot\ut=0 \ \text{on}\ \Gamma\}$ equipped with the norm   $\|\usigma\|_{\rot,\Omega}=(\|\usigma\|^2_{0,\Omega}+\|\rot\usigma\|^2_{0,\Omega})^{\frac{1}{2}}$.
%and seminorm $|\cdot|_{{\rm rot},\Omega}=\|{\rm rot}\cdot\|_{0,\Omega}$.
Specially, a new notation is used for the space
$
H^{1}(\mathrm{rot},\Omega)
\triangleq
\{
\usigma|\usigma\in \uH^1(\Omega), \mathrm{rot}\usigma\in H^{1}(\Omega)
\}$.

Let $\mathcal{J}_{h}$ be a regular subdivision of domain $\Omega$,
with the elements being convex quadrilaterals, i.e.,
$\Omega=\cup_{K\in\mathcal{J}_{h}} K$.
And any two distinct quadrilaterals $K_{1}$ and $K_{2}$ in $\mathcal{J}_{h}$
with $\bar{K_{1}}\cap \bar{{K}_{2}}\neq\emptyset$, share exactly one vertex or
have one edge in common.
Denote a finite element $(K,P_K,D_K)$ by Ciarlet's triple \cite{ciarlet2002finite}, the subscription $K$ implying the dependence of the quadrilateral $K$.
Let $\mathcal{N}_{h}$ denote the set of all the vertexes,
$\mathcal{N}_{h}=
\mathcal{N}_{h}^{i}\cup
\mathcal{N}_{h}^{b}$,
with
$\mathcal{N}_{h}^{i}$ and
$\mathcal{N}_{h}^{b}$
consisting of the interior vertexes and the boundary vertexes, respectively.
Similarly,
let
$\mathcal{E}_{h}=
\mathcal{E}_{h}^{i}\cup
\mathcal{E}_{h}^{b}$
denote the set of all the edges,
with
$\mathcal{E}_{h}^{i}$ and
$\mathcal{E}_{h}^{b}$
consisting of the interior edges and the boundary edges, respectively.
The subscript $h$ in various notations implies the dependence of the subdivision.
Denote by $h_{K}$  the diameter of each quadrilateral $K$
and the grid size
$h\triangleq\max_{K\in\mathcal{J}_{h}}{h_{K}}$.
On the edge $e$, we use $[\![]\!]_{e}$ for the jump across $e$.

Throughout the paper we denote by $C$ a positive constant not necessarily the same at each occurrence but always independent of the diameter $h_K$ or the grid size $h$. Denote $\lambda_{F,G}$ by the generalized eigenvalue of matrix pair $(F,G)$, i.e., $F\ux=\lambda_{F,G}G\ux$.
We use notations $P_e$ and $P_{K}$
to denote the average of the integral
on the edge $e$ and quadrilateral $K$, respectively.

\section{Geometry of the quadrilaterals}
\label{sec:geo}

\subsection{Quadrilateral and functions}
\label{subsec:quad and fun}

Let $K$ be a convex quadrilateral
with $A_{i}$ the vertices and $e_{i}$ the edges, $i = 1:4$,
see Figure \ref{fig:convquad}.
Let $m_{i}$ be the mid-point of $e_{i}$,
then the quadrilateral
$\square m_{1}m_{2}m_{3}m_{4}$
is a parallelogram.
The cross point of $m_{1}m_{3}$ and $m_{2}m_{4}$,
which is labelled as $O$,
is the midpoint of both
$m_{1}m_{3}$ and $m_{2}m_{4}$.
Denote
$\undertilde{r}=\overrightarrow{Om_4}$
and
$\undertilde{s}=\overrightarrow{Om_1}$.
Then, the coordinates of the vertices in the coordinate system $\undertilde{r}O\undertilde{s}$
are
$A_{1}(1+\alpha, 1+\beta)$,
$A_{2}(-1-\alpha, 1-\beta)$,
$A_{3}(-1+\alpha, -1+\beta)$,
$A_{4}(1-\alpha, -1-\beta)$
and for some $\alpha, \beta$.
Since $K$ is convex,
$|\alpha|+|\beta|<1$.
Without loss of generality,
we assume that
$\alpha>0$, $\beta>0$
and
$\undertilde{r}\times\undertilde{s}>0$. Here and after, we call $\alpha$, $\beta$ local shape parameters.

Define the shape regularity indicator of the quadrilateral $K$ by
$\mathcal{R}_K=\max\{\frac{|r||s|}{\ur\times\us},\frac{|r|}{|s|},\frac{|s|}{|r|}\}$. Evidently $\mathcal{R}_K\geqslant 1$, and $\mathcal{R}_K=1$ if and only if $K$ is a square. A given family of quadrilateral subdivisions $\{\mathcal{J}_h\}$ of $\Omega$ is called regular, if all the shape regularity indicators of the quadrilaterals of all the subdivisions are uniformly bounded.

\begin{figure}[htbp]
	\vspace{-0.7cm}
	\centering
	\begin{tikzpicture}
	\draw (0.05, 0.26)node{$O$};
	
	%\draw[parameter]:
	%point(x,y);line(x1,y1)--(x2,y2)
	%node{\Large$...$}
	\draw[dashed](-2, 2)--(3, 2);
	\draw[dashed](-3.5, -2)--(1.5, -2);
	\draw[dashed](-3.5, -2)--(-2, 2);
	\draw[dashed](1.5, -2)--(3, 2);
	
	\draw[dashed](0.5, 2)--(-1, -2);
	\draw[dashed](2.25, 0)--(-2.75, 0);
	
	\draw(-2.345, 0.275)node{\Large$m_2$};
	\draw(-0.75, -1.9)node{\Large$m_3$};
	\draw(2.1, 0.35)node{\Large$m_4$};
	\draw(0.85, 1.75)node{\Large$m_1$};
	
	\draw(-3.4, 0.175)node{\Large$e_2$};
	\draw(-0.95, -2.4)node{\Large$e_3$};
	\draw(2.7, 0.05)node{\Large$e_4$};
	\draw(0.4, 2.35)node{\Large$e_1$};
	
	\draw[->, line width=3pt, -latex](-0.25, 0)-- node[auto] {$\undertilde{r}$} (2.25, 0);
	\draw[->, line width=3pt, -latex](-0.25, 0)-- node[auto] {$\undertilde{s}$} (0.5, 2);
	
	\draw(3.725, 2.6)--(-2.725, 1.4);
	\draw(3.725, 2.6)--(0.775, -2.6);
	\draw(-2.775, -1.4)--(0.775, -2.6);
	\draw(-2.775, -1.4)-- (-2.725, 1.4);
	
	\draw(-2.9, 1.7)node{\Large$A_{2}$};
	\draw(-2.8, -1.7)node{\Large$A_{3}$};
	\draw(0.9, -2.85)node{\Large$A_{4}$};
	\draw(3.85, 2.8)node{\Large$A_{1}$};
	
	\draw[dashed](0.5, 2)--(2.25, 0);
	\draw[dashed](2.25, 0)--(-1, -2);
	\draw[dashed](-1, -2)--(-2.75, 0);
	\draw[dashed](-2.75, 0)--(0.5, 2);
	
	\end{tikzpicture}
	
	\caption{Illustration of a convex quadrilateral $K$.}
	\label{fig:convquad}
	\vspace{-0.5cm}
\end{figure}
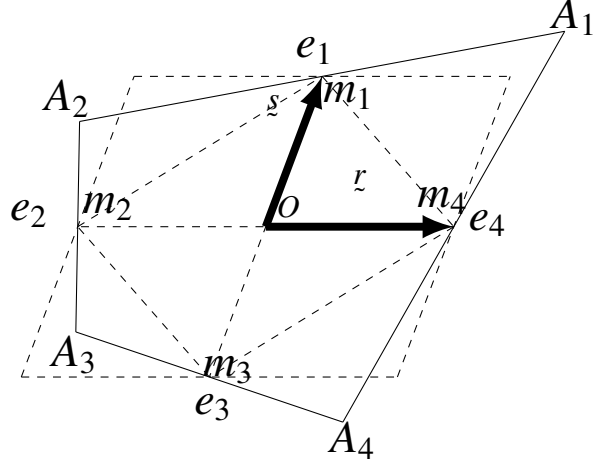

Define two linear functions $\xi$ and $\eta$ by $\xi(a\undertilde{r}+b\undertilde{s})=a$ and $\eta(a\undertilde{r}+b\undertilde{s})=b$.
The two functions play the same role on quadrilaterals as that played by barycentric coordinates on triangles.
Additionally we also define two functions $\hat{\xi}$ and $\hat{\eta}$ by $\hat{\xi}=\xi-\dashint_{K}\xi\ \mathrm{d}x$,
$\hat{\eta}=\eta-\dashint_{K}\eta\ \mathrm{d}x$
for convenience of calculation in Section \ref{sec:NFEs and mc}. Technically, we construct two tables \ref{tab:tab1}-\ref{tab:tab2} about the evaluation of some functions which will be useful in theoretical analysis and numerical computation.

\begin{table}[htbp]
	\vspace{-0.5cm}
	\caption{boundary integral evaluation of some functions}\label{tab:tab1}
	\begin{center}
		\begin{tabular}{|c|ccc|}
			\hline
			Function &
			$\xi^{2}$ &
			$\xi\eta$ &
			$\eta^{2}$ \\ \hline
			$\int_{e_{1}}$ &
			$\frac{(1+\alpha)^{2}|e_{1}|}{3}$ & $\frac{(1+\alpha)\beta|e_{1}|}{3}$ & $\frac{(3+\beta^{2})|e_{1}|}{3}$\\ \hline
			$\int_{e_{2}}$ &
			$\frac{(3+\alpha^{2})|e_{2}|}{3}$ & $\frac{\alpha(-1+\beta)|e_{2}|}{3}$ &
			$\frac{(1-\beta)^{2}|e_{2}|}{3}$\\ \hline
			$\int_{e_{3}}$ &
			$\frac{(1-\alpha)^{2}|e_{3}|}{3}$ & $\frac{(-1+\alpha)\beta|e_{3}|}{3}$ & $\frac{(3+\beta^{2})|e_{3}|)}{3}$\\ \hline
			$\int_{e_{4}}$ &
			$\frac{(3+\alpha^{2})|e_{4}|}{3}$ & $\frac{\alpha(1+\beta)|e_{4}|}{3}$ & $\frac{(1+\beta)^{2}|e_{4}|}{3}$\\ \hline
		\end{tabular}
	\end{center}
	\vspace{-1cm}
\end{table}

\begin{table}[htbp]
	\caption{integral evaluation of some functions in domain K}\label{tab:tab2}
	\begin{center}
		\begin{tabular}{|c|cccccc|}
			\hline
			Function & $1$ & $\xi$ & $\eta$ & $\xi^{2}$ & $\xi\eta$ & $\eta^{2}$ \\ \hline
			$\int_{K}$ &
			$4\ur\times\us$ & $\frac{4\beta}{3}\ur\times\us$ & $\frac{4\alpha}{3}\ur\times\us$ & $\frac{4}{3}(1+\alpha^{2})\ur\times\us$ & $\frac{4}{3}\alpha\beta\ur\times\us$& $\frac{4}{3}(1+\beta^{2})\ur\times\us$ \\ \hline
			Function & $1$ & $\hat{\xi}$ & $\hat{\eta}$ & $\hat{\xi}^{2}$ & $\hat{\xi}\hat{\eta}$ & $\hat{\eta}^{2}$ \\ \hline
			$\int_{K}$ &
			$4\ur\times\us$ &
			$0$ & $0$ & $\frac{4}{9}(3+3\alpha^{2}-\beta^2)\ur\times\us$ &
			$\frac{8}{9}\alpha\beta\ur\times\us$ & $\frac{4}{9}(3+3\beta^{2}-\alpha^2)\ur\times\us$ \\ \hline
		\end{tabular}
	\end{center}
\end{table}

\subsection{Grid refinement}
\label{subsec:mesh refinement}

Denote by $d_{K}$ the distance between the midpoints of the diagonals of the quadrilateral $K$, then we introduce a lemma.

\begin{lemma}
	All refined quadrilaterals produced by a bisection scheme of grid subdivisions have the property $d_{K}=\mathcal{O}(h^{2}_{K})$.
	\label{lemma:mesh refinement}
\end{lemma}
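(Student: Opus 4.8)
The plan is to carry out the whole argument in the skew coordinate system $\ur O\us$ fixed above, in which $d_K$ has a transparent closed form, and then to watch how $d_K$ transforms under one refinement. First I would note that for the cyclically ordered vertices $A_1,\dots,A_4$ the midpoints of the two diagonals are $\frac12(A_1+A_3)$ and $\frac12(A_2+A_4)$, whence
\[
d_K=\tfrac12\big|(A_1+A_3)-(A_2+A_4)\big|=2\,\|\alpha\ur+\beta\us\|.
\]
Thus $d_K$ vanishes precisely when $\alpha=\beta=0$, i.e.\ when $K$ is a parallelogram, so $d_K$ genuinely measures the failure of $K$ to be a parallelogram; moreover, since both diagonal midpoints lie in $\overline K$, the crude bound $d_K\le h_K$ holds and serves as the base level of the induction.

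Next I would analyse a single refinement step, namely the subdivision of $K$ into the four sub-quadrilaterals cut out by joining the center $O$ to the edge midpoints $m_1,m_2,m_3,m_4$ (equivalently, the two bisections along the pairs of opposite edges). Taking the child $K'$ with cyclic vertices $A_1,m_1,O,m_4$, and using $m_1=\frac12(A_1+A_2)$, $m_4=\frac12(A_1+A_4)$ and $O=0$, the same formula gives
\[
d_{K'}=\tfrac12\big|(A_1+O)-(m_1+m_4)\big|
=\tfrac12\big|{-}\tfrac12(A_2+A_4)\big|
=\tfrac12\|\alpha\ur+\beta\us\|=\tfrac14\,d_K .
\]
The remaining three children give the identical relation by the evident symmetry of the labelling, so every child satisfies the \emph{exact} identity $d_{K'}=\frac14 d_K$. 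This quartering of $d_K$ against a halving of the diameter is the heart of the matter and the step I expect to do the real work.

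Finally I would iterate and combine with a diameter estimate. Applying the identity $n$ times, any cell $K$ produced after $n$ refinements of a coarse ancestor $K_0$ obeys $d_K=4^{-n}d_{K_0}\le 4^{-n}h_{K_0}$. On the other hand, regularity of the refined family---the children tend to parallelograms, so the indicators $\mathcal{R}_K$ stay uniformly bounded---supplies the matching diameter bound $h_K\ge c\,2^{-n}h_{K_0}$, hence $h_K^2\ge c^2 4^{-n}h_{K_0}^2$. Dividing,
\[
\frac{d_K}{h_K^2}\le\frac{4^{-n}h_{K_0}}{c^2\,4^{-n}h_{K_0}^2}=\frac{1}{c^2 h_{K_0}},
\]
which is independent of the refinement level $n$; taking the maximum over the finitely many coarse cells yields a single constant $C$ with $d_K\le C h_K^2$, i.e.\ $d_K=\mathcal{O}(h_K^2)$.

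The only genuinely nontrivial point I anticipate is the exact factor-$\frac14$ recursion above; the diameter lower bound is the standard statement that midpoint (red) refinement preserves shape regularity, which I would either cite or confirm by checking that each child shape parameter is a fixed fraction of the parent's.
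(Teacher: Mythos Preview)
Your argument is correct; the paper itself gives no proof at all and simply cites \cite{shi1984convergence,zhang2004polynomial}. What you have written is essentially the classical argument found in those references: the exact recursion $d_{K'}=\tfrac14 d_K$ under one midpoint subdivision, combined with the approximate halving of the diameter, yields $d_K=\mathcal{O}(h_K^2)$ after iteration. Your computation of the recursion in the $\ur O\us$ coordinates is clean and correct for all four children, and the bound $d_K\le h_K$ for the base case is immediate.

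The one point you defer---that midpoint refinement preserves shape regularity, giving $h_K\ge c\,2^{-n}h_{K_0}$ with $c$ independent of $n$---is indeed the standard fact, but note that it is essential: a crude bound like $h_{K'}\ge \tfrac14 h_K$ (which follows without regularity) would only give $h_K\ge 4^{-n}h_{K_0}$ and the ratio $d_K/h_K^2$ would blow up. The regularity argument works because the child shape parameters $\alpha',\beta'$ are controlled by the parent's, and since $d_{K'}/h_{K'}\to 0$ the children become asymptotically parallelogram, so the ratio $h_{K'}/h_K$ tends to exactly $\tfrac12$. Either citing this or sketching the one-step estimate on $(\alpha',\beta')$ would complete your self-contained proof.
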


\begin{proof}
	The proof can be found in {\cite{shi1984convergence,zhang2004polynomial}}.
\end{proof}

Here and after, we call the grid generated by the bisection scheme as an asymptotically parallelogram grid.
We notice that the quantity $\max_{K\in\mathcal{J}_{h}}\{\alpha_K,\beta_K\}$ is of order $\mathcal{O}(h)$ uniformly for asymptotically regular parallelogram grid by Lemma \ref{lemma:mesh refinement}. This proposition will be used frequently in the Section \ref{sec:NFEs and mc}.

\subsection{On the construction of conforming element with piecewise polynomials}
\label{subsec:mcpp on quads}

Hereby, we prove that, as rigorously presented in the below theorem, for general quadrilateral grids, no practically useful conforming elements can be constructed with piecewise polynomials.

\begin{lemma}\label{lemma:nocon}
	Let $K_1$ be a given quadrilateral with $D$ being one of its vertices. Let $q$ be a polynomial defned on $K_1$, such that $q$ vanishes along the two opposite edges of $D$. If for any patch $\omega_D$ that consists of quadrilaterals and is centered at $D$ (including $K_1$, see Figure \ref{fig:patch for no CFE space} for a reference), we can find a piecewise polynomial $r\in H^1_0(\omega_D)$, such that $r|_{K_1}=q$, then $q$ vanishes on the boundary of $K_1$.
\end{lemma}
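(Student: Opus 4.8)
The plan is to exploit the freedom in the choice of the surrounding patch $\omega_D$. Label the vertices of $K_1$ in cyclic order as $D,P,R,Q$, so that the two edges meeting at $D$ are $DP$ and $DQ$, while the two opposite edges are $PR$ and $RQ$, meeting at the far vertex $R$. Since $q$ vanishes along $PR$ and $RQ$ by hypothesis, in particular $q(P)=q(Q)=q(R)=0$. It therefore suffices to prove that $q$ vanishes identically along each of the two edges $DP$ and $DQ$; by symmetry I treat $DP$, the argument for $DQ$ being identical. Throughout I use the elementary fact that a polynomial which vanishes on a segment of a straight line is divisible, as a polynomial, by the affine linear form defining that line: its restriction to the line is a univariate polynomial with infinitely many zeros, hence identically zero, so the linear form is a factor.

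For any admissible patch I take the cell $K_2$ of $\omega_D$ adjacent to $K_1$ across the edge $DP$. Being one of the quadrilaterals of a patch centred at $D$, $K_2$ has $D$ as a vertex and may be written in order as $D,P,S,T$, so that its edges issuing from $D$ are $DP$ (shared with $K_1$) and $TD$ (shared with the next cell), while its two remaining edges $PS$ and $ST$ lie on the outer boundary $\partial\omega_D$. Writing $r_2=r|_{K_2}$, membership $r\in H^1(\omega_D)$ forces $r_2=q$ along $DP$, while $r\in H^1_0(\omega_D)$ forces $r_2=0$ along $PS$ and along $ST$. By the divisibility fact, the linear form $\ell_{ST}$ defining the line $L_{ST}$ through $S$ and $T$ divides $r_2$. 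Restricting to the line $L_{DP}$ through $D$ and $P$ and parametrising it by $s$, I obtain $q|_{L_{DP}}=r_2|_{L_{DP}}=\big(\ell_{ST}|_{L_{DP}}\big)\,g$ for some univariate polynomial $g$; since $\ell_{ST}|_{L_{DP}}$ is a nontrivial affine function vanishing exactly at the parameter $s_0$ where $L_{ST}$ meets $L_{DP}$ (choosing $K_2$ so that these lines are transverse), this shows $q|_{L_{DP}}(s_0)=0$.

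The decisive observation is that $s_0$ is not fixed: it is the abscissa at which the outer edge $ST$ of $K_2$, prolonged, cuts $L_{DP}$, and this may be prescribed throughout an interval by tilting $ST$ while keeping $K_2$ a convex quadrilateral and completing $\omega_D$ into an admissible patch around $D$ (for instance a fan of a few quadrilaterals, all but $K_2$ and its successor held fixed). Since the hypothesis guarantees an extension $r$ for \emph{every} such patch, the fixed univariate polynomial $q|_{L_{DP}}$ must vanish at $s_0$ for each of these infinitely many distinct values; a nonzero polynomial having only finitely many roots, I conclude $q|_{L_{DP}}\equiv 0$, i.e.\ $q$ vanishes along $DP$. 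Applying the same construction to the neighbour across $DQ$ gives the vanishing of $q$ along $DQ$, and together with the hypothesis on $PR$ and $RQ$ this yields $q\equiv 0$ on $\partial K_1$. I expect the only delicate point to be the geometric claim that a continuum of distinct crossing parameters $s_0$ is realised by genuinely admissible patches --- that the deforming cell $K_2$ stays convex, that $L_{ST}$ stays transverse to $L_{DP}$, and that the remaining cells can always be arranged to close up a valid subdivision in a neighbourhood of $D$; once this is secured, the degree count closes the proof at once.
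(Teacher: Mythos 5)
Your proof is correct and follows essentially the same route as the paper's: pass to the neighbouring cell $K_2$ across an edge $e\ni D$, use $r|_{K_2}=0$ on an outer boundary edge to factor out the corresponding linear form, deduce that $q|_e$ vanishes at the point where that edge's line meets the line of $e$, and then vary the patch to move that intersection point. The only (cosmetic) difference is that you conclude by noting the fixed univariate polynomial $q|_e$ acquires infinitely many distinct roots, whereas the paper iterates the factorization to produce a factor of unbounded degree; both versions leave the same geometric point (that admissible deformations realize a continuum of intersection parameters) at the same informal level.
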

\begin{proof}
	First we observe that, by the continuity of the finite element space generated from Ciarlet's triple by the continuity of nodal parameters and by the arbitrariness of choosing the evaluation of nodal parameters, for any groups of quadrilaterals (including $K_1$) which can form a patch $\omega_D$ centered at $D$, there exists a piecewise polynomial $r\in H^1_0(\omega_D)$, such that $r|_K=q$. We emphasize that the evaluation on these common nodal parameters does not inflect the evaluation of the finite element function on the edges which do not intersect with $K_1$, and this is why $r$ can be chosen in $H^1_0(\omega_D)$.
	
	Now we assume that $q$ is nontrivial along one edge $e\ni D$ of $K_1$ and are going to establish a contradiction. Since $r|_f=0$, we can rewrite $r|_{K_2}=r_{-1}\cdot l_f$, where $l_f$ is a first degree polynomial which vanishes on $f$ and $r_{-1}$ is a polynomial with one degree lower than $r$. Without loss of generality, we assume that $f$ is not parallel to $e$. By the continuity of $r$ on $e$, we can rewrite $q|_e=q_{-1}l_f|_e=q_{-1}(t-\theta)$, where $t$ is the length parameter of $e$, $q_{-1}$ is a polynomial on $e$ with one degree lower than $q$ and $\theta$ varies as the angle between $e$ and $f$ varies.  Recall that  $K_1$ and $q_{K_1}$ are fixed, but $f$ can be arbitrary. Change $f$ to another direction $f'$, by elementary calculation, it follows that $q|_e=q_{-2}(t-\theta)(t-\theta')$. This way, by repeating the procedure, we can see that $q|_e$ contains a polynomial factor with growing degree and thus can not be a nontrivial polynomial. This leads to a contradiction to the assumption that $q|_e\not\equiv 0$ and completes the proof.
\end{proof}
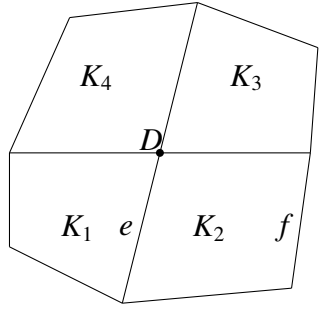
\begin{figure}[htbp]
	%	\vspace{-1cm}
	\centering
	\begin{tikzpicture}
	\draw(-1.5,-1.25)--(0,-2);
	\draw(0,-2)--(2.25,-1.8);
	\draw(-1.5,0)--(0.5,0);
	\draw(0.5, 0)--(2.5, 0);
	\draw(-0.7, 1.8)--(1,2);
	\draw(1,2)--(2.6, 1.4);
	
	\draw(-1.5,-1.25)--(-1.5,-0);
	\draw(-1.5,-0)--(-0.7, 1.8);
	\draw(0,-2)--(0.5,0);
	\draw(0.5, 0)--(1, 2);
	\draw(2.25,-1.8)--(2.5,-0);
	\draw(2.5, -0)--(2.6, 1.4);
	
	\draw(0.38,0.17)node{$D$};
	\draw(-0.6,-1)node{$K_{1}$};
	\draw(1.15,-1)node{$K_{2}$};
	\draw(-0.35,1)node{$K_{4}$};
	\draw(1.65,1)node{$K_{3}$};
	\draw(0.05,-1)node{$e$};
	\draw(2.15,-1)node{$f$};
	\fill(0.5,0) circle(1.5pt);
	\end{tikzpicture}
	\caption{The patch $\omega_D$ around $K_1$ centered at $D$.}
	\vspace{-0.5cm}
	\label{fig:patch for no CFE space}
\end{figure}

\begin{theorem}\label{thm:nocon}
	Let ${\rm FEM_{pq}}=(K, P_K, N_K)$ be a finite element defined by Ciarlet's triple, with $K$ being any quadrilateral, and $P_K$ being a space of polynomials on $K$. If the finite element space generated by ${\rm FEM_{pq}}$ by the continuity of nodal parameters is an $H^1$ subspace on any grid that consists of arbitrary quadrilaterals, then $P_K$ only contains polynomials that vanish on the boundary of $K$.
\end{theorem}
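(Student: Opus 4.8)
The plan is to derive the theorem as a consequence of Lemma \ref{lemma:nocon}, which already does the essential geometric work. Given any $q \in P_K$, I want to show $q$ vanishes on $\partial K$. Fix an arbitrary vertex $D$ of $K$ and relabel so that $K = K_1$ in the setting of the lemma. The key observation is that the hypothesis of Lemma \ref{lemma:nocon} — that for every patch $\omega_D$ centered at $D$ one can find $r \in H^1_0(\omega_D)$ with $r|_{K_1} = q$ — is precisely what is guaranteed when the finite element space is $H^1$-conforming on every quadrilateral grid and the continuity is enforced through nodal parameters. So the first step is to verify this reduction carefully.

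First I would argue the reduction as follows. Suppose $q \in P_K$ vanishes along the two edges of $K_1$ opposite to $D$ (i.e.\ the two edges not meeting $D$). I claim such $q$ satisfy the hypothesis of the lemma. Indeed, embed $K_1$ into any patch $\omega_D$ built from quadrilaterals sharing the vertex $D$. Because the element is defined as a Ciarlet triple with nodal parameters, and the generated space is $H^1$ on arbitrary grids, the global function must be continuous across interelement edges. Since $q$ already vanishes on the two edges of $K_1$ away from $D$, one can choose the nodal parameters on the neighboring cells so that the assembled piecewise-polynomial $r$ matches $q$ on $K_1$, is continuous across all edges, and vanishes on $\partial \omega_D$; the argument given in the first paragraph of the proof of Lemma \ref{lemma:nocon} — that the shared nodal parameters do not constrain the values on edges disjoint from $K_1$ — is exactly what makes $r \in H^1_0(\omega_D)$ achievable. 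Thus Lemma \ref{lemma:nocon} applies and forces $q$ to vanish on all of $\partial K_1$, in particular on the two edges meeting $D$.

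Next I would upgrade this from "$q$ vanishing on the two opposite edges" to a statement about all of $P_K$. The cleanest route is to show that membership in $P_K$ already forces $q$ to vanish on the two edges opposite $D$, for \emph{every} choice of vertex $D$; combining the four choices then yields vanishing on the whole boundary. To see the vanishing on opposite edges, I would use the conformity hypothesis directly: if some $q \in P_K$ did not vanish on an edge $e$ of $K$, then on a grid in which $K$ abuts a neighbor across $e$, continuity of the assembled space across $e$ would have to be maintained for arbitrary admissible nodal data, which (by the same degree-growth contradiction as in the lemma, applied with $e$ now playing the role of the boundary edge and a transverse neighboring edge playing the role of $f$) is impossible unless $q|_e \equiv 0$. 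Running this for each of the four edges gives $q|_{\partial K} \equiv 0$, which is the claim.

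The main obstacle I anticipate is the bookkeeping in the reduction step, not the core contradiction: one must be careful that the nodal parameters of a Ciarlet triple can genuinely be chosen freely enough to realize an arbitrary boundary polynomial $q|_e$ as the trace of a conforming assembled function, and that enforcing continuity at the shared vertex $D$ and along $e$ does not secretly over-determine the system. Concretely, I would need to confirm that the finite element is \emph{unisolvent} and that its nodal parameters localize to edges/vertices in a way compatible with the patch construction of Figure \ref{fig:patch for no CFE space}; otherwise the phrase "by the arbitrariness of choosing the evaluation of nodal parameters" in the lemma's proof must be justified rather than assumed. Once that localization is granted, the degree-growth argument of Lemma \ref{lemma:nocon} supplies the contradiction and the theorem follows.
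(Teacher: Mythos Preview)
Your approach is correct and aligned with the paper, whose entire proof is the single sentence ``The proof follows immediately from Lemma~\ref{lemma:nocon}.'' Note, however, that your two-step plan is redundant: once your Step~2 runs the degree-growth contradiction of the lemma directly on each edge of $K$, you have already shown $q|_{\partial K}=0$ for every $q\in P_K$, so Step~1 (reducing to $q$ that already vanishes on the two edges opposite $D$) becomes unnecessary --- indeed, the lemma's \emph{statement} presupposes that vanishing, whereas what the theorem actually needs is the lemma's \emph{argument} applied to a general $q$, which is exactly your Step~2. Your closing concern about localization of nodal parameters is well-placed and is precisely the point the paper glosses over with the phrase ``by the arbitrariness of choosing the evaluation of nodal parameters.''
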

The proof follows immediately from Lemma \ref{lemma:nocon}.

\begin{remark}
	Shortly speaking, if a finite element ${\rm FEM_{pq}}$, the subscripts ``p" for {\it polynomial} and ``q" for {\it quadrilateral}, can formulate continuous piecewise polynomial space on general quadrilateral grids, then the shape function space of the finite element consists of bubble functions on $K$ only. This theorem shows the non-existence of practically useful conforming finite element defined by Ciarlet's triple on general quadrilateral grids. However, we emphasize that it does not exclude the possibility that, on a given quadrilateral grid, a subspace of $H^1_0(\Omega)$ that consists of piecewise $k$-th degree polynomials can contain more than cell bubbles.
\end{remark}

\begin{remark}\label{rem:nocon}
	Similarly, conforming finite elements can not be defined for $H(\rot)$ with piecewise polynomials for general quadrilateral grids. Indeed, the assertion can be generalized to general Sobolev spaces.
\end{remark}

\section{A sequence of lowest-degree finite elements}
\label{sec:a sequence FEs}

\subsection{Definitions of the finite elements}
\label{subsec:def of FEs}

In the subsection we introduce three types of finite elements.

The quadrilateral finite element presented below is similar to the bilinear element on rectangle, and we call it the quadrilateral bilinear (${\rm QBL}$) element.

\fbox{
	\begin{minipage}{0.99\textwidth}
		The ${\rm QBL}$ element is defined by
		$(K,P_{K}^{\rm QBL},D_{K}^{\rm QBL})$ with
		\begin{enumerate}
			\item[1.] $K$ is a convex quadrilateral with vertexes
			$A_{i}$, $i=1:4$
			\item[2.] $P_{K}^{\rm QBL}
			\triangleq
			\mathrm{span}\{1, \xi, \eta, \xi \eta \}$
			\item[3.] $D_{K}^{\rm QBL}
			\triangleq
			\{u(A_{i}),i=1:4\}$
			\quad
			\text{for any}
			$u\in H^{2}(K)$
			\label{def:(K,P,V)}	
		\end{enumerate}
	\end{minipage}
}
${\rm QBL}$ element defined above is unisolvent.
Indeed, define
\begin{equation}
\left\{
\begin{aligned}
\phi_{1} & =
\frac{\alpha+\beta-1}{4(\alpha^{2}+\beta^{2}-1)}\xi\eta+ \frac{(\beta-1)(-\alpha+\beta+1)}{4(\alpha^{2}+\beta^{2}-1)}\xi+  \frac{(\alpha-1)(\alpha-\beta+1)}{4(\alpha^{2}+\beta^{2}-1)}\eta+
\frac{-(\alpha-1)(\beta-1)(\alpha+\beta+1)}{4(\alpha^{2}+\beta^{2}-1)} \\
\phi_{2} & =
\frac{-\alpha+\beta+1}{4(\alpha^{2}+\beta^{2}-1)}\xi\eta+ \frac{-(\beta+1)(\alpha+\beta-1)}{4(\alpha^{2}+\beta^{2}-1)}\xi+  \frac{(\alpha-1)(\alpha+\beta+1)}{4(\alpha^{2}+\beta^{2}-1)}\eta+
\frac{(\alpha-1)(\beta+1)(\alpha-\beta+1)}{4(\alpha^{2}+\beta^{2}-1)}  \\
\phi_{3} & =
\frac{-(\alpha+\beta+1)}{4(\alpha^{2}+\beta^{2}-1)}\xi\eta+ \frac{(\beta+1)(\alpha-\beta+1)}{4(\alpha^{2}+\beta^{2}-1)}\xi+ \frac{(\alpha+1)(-\alpha+\beta+1)}{4(\alpha^{2}+\beta^{2}-1)}\eta+
\frac{(\alpha+1)(\beta+1)(\alpha+\beta-1)}{4(\alpha^{2}+\beta^{2}-1)}  \\
\phi_{4} & =
\frac{\alpha-\beta+1}{4(\alpha^{2}+\beta^{2}-1)}\xi\eta+ \frac{(\beta-1)(\alpha+\beta+1)}{4(\alpha^{2}+\beta^{2}-1)}\xi+ \frac{-(\alpha+ 1)(\alpha+\beta-1)}{4(\alpha^{2}+\beta^{2}-1)}\eta+
\frac{(\alpha+1)(\beta-1)(-\alpha+\beta+1)}{4(\alpha^{2}+\beta^{2}-1)}.
\end{aligned}
\right.
\label{equ:local dual basis of QBL}
\end{equation}
then we can verify directly
$\phi_{i}(A_{j})=\delta_{ij}$, $i,j=1:4$.

Here and after, the functions
$\{\phi_{i}\}_{i=1:4}$ are called local basis of
$P_{K}^{\rm QBL}$. We use the notation $\phi_i^{(j)}$ to denote $j$-th coefficient of $i$-th basis, for example,
$\phi_{1}^{(2)} = \frac{(\beta-1)(-\alpha+\beta+1)}{4(\alpha^{2}+\beta^{2}-1)}$.

Given a ${\rm QBL}$ element
$(K,P_{K}^{\rm QBL},D_{K}^{\rm QBL})$,
define the local interpolation operator $J_{K}$ by
\begin{equation}
J_{K}u=\sum_{i=1}^{4}u(A_{i})\phi_{i},
\quad
\forall
u \in H^2(K).
\label{def:J_K}
\end{equation}
Furthermore, given a family of ${\rm QBL}$ elements
$(K_{i},P_{K_{i}}^{\rm QBL},D_{K_{i}}^{\rm QBL})$ in a subdivision $\mathcal{J}_h$,
define the global interpolation operator $J_{h}$ by
\begin{equation}
J_{h}u|_{K_{i}}=
J_{K_{i}}u
\quad
\forall K_{i}\in\mathcal{J}_{h}.
\label{def:J_h}
\end{equation}

The quadrilateral finite element presented below is similar to the Raviart-Thomas element on rectangle, and we call it the quadrilateral Raviart-Thomas (${\rm QRT}$) element.

\fbox{
	\begin{minipage}{0.99\textwidth}
		The ${\rm QRT}$ element is defined by
		$(K,P_{K}^{\rm QRT},D_{K}^{\rm QRT})$ with
		\begin{enumerate}
			\item[1.]
			$K$ is a convex quadrilateral with edges
			$e_{i}$, $i=1:4$
			\item[2.] $P_{K}^{\rm QRT}
			\triangleq
			\mathrm{span}
			\{\nabla\xi,\nabla\eta,\xi\nabla\eta,\eta\nabla\xi\}$
			\item[3.] $D_{K}^{\rm QRT}
			\triangleq
			\{\dashint_{e_{i}}
			\undertilde{\sigma}
			\cdot
			\undertilde{t}{}_{i}\ \mathrm{d}s,i=1:4\}$
			\quad
			\text{for any}
			$\undertilde{\sigma}\in \uH^1(K)$
			\label{def:(K,P,D) of rot FEM}
		\end{enumerate}
		Here $\ut{}_{i}$ is the unit tangential vector of $e_{i}$ respectively and the positive direction is counterclockwise along $\partial K$.
	\end{minipage}
}

The ${\rm QRT}$ element as above is unisolvent.
Indeed, define
\begin{equation}
\left\{
\begin{aligned}
\uphi{}_{1} & =
\frac{(1-\alpha)(1-\beta^2)|e_1|}{4(\alpha^{2}+\beta^{2}-1)}\nabla\xi+ \frac{\alpha(1-\alpha)\beta|e_1|}{4(\alpha^{2}+\beta^{2}-1)}\nabla\eta+  \frac{-\alpha(1-\alpha)|e_1|}{4(\alpha^{2}+\beta^{2}-1)}\xi\nabla\eta+
\frac{(1-\alpha-\beta^2)|e_1|}{4(\alpha^{2}+\beta^{2}-1)}\eta\nabla\xi \\
\uphi{}_{2} & =
\frac{\alpha\beta(1+\beta)|e_2|}{4(\alpha^{2}+\beta^{2}-1)}\nabla\xi+ \frac{(1-\alpha^2)(1+\beta)|e_2|}{4(\alpha^{2}+\beta^{2}-1)}\nabla\eta+  \frac{-(1-\alpha^2+\beta)|e_2|}{4(\alpha^{2}+\beta^{2}-1)}\xi\nabla\eta+
\frac{-\beta(1+\beta)|e_2|}{4(\alpha^{2}+\beta^{2}-1)}\eta\nabla\xi  \\
\uphi{}_{3} & =
\frac{-(1+\alpha)(1-\beta^2)|e_3|}{4(\alpha^{2}+\beta^{2}-1)}\nabla\xi+ \frac{-\alpha(1+\alpha)\beta|e_3|}{4(\alpha^{2}+\beta^{2}-1)}\nabla\eta+ \frac{\alpha(1+\alpha)|e_3|}{4(\alpha^{2}+\beta^{2}-1)}\xi\nabla\eta+
\frac{(1+\alpha-\beta^2)|e_3|}{4(\alpha^{2}+\beta^{2}-1)}\eta\nabla\xi  \\
\uphi{}_{4} & =
\frac{-\alpha\beta(1-\beta)|e_4|}{4(\alpha^{2}+\beta^{2}-1)}\nabla\xi+ \frac{-(1-\alpha^2)(1-\beta)|e_4|}{4(\alpha^{2}+\beta^{2}-1)}\nabla\eta+ \frac{-(1-\alpha^2-\beta)|e_4|}{4(\alpha^{2}+\beta^{2}-1)}\xi\nabla\eta+
\frac{\beta(1-\beta)|e_4|}{4(\alpha^{2}+\beta^{2}-1)}\eta\nabla\xi.
\end{aligned}
\right.
\label{equ:local dual basis of QRT}
\end{equation}
Denote $\{D_i^{\rm QRT}\}_{i=1:4}$ by the components of $D_K^{\rm QRT}$, then we can verify directly $D_i^{\rm QRT}(\uphi{}_j)=\delta_{ij}$, $i,j=1:4$.

Here and after, the functions $\{\uphi{}_i\}_{i=1:4}$ are called local  basis of $P_K^{\rm QRT}$.

Given the ${\rm QRT}$ element
$(K,P_{K}^{\rm QRT},D_{K}^{\rm QRT})$,
define the local interpolation operator $\sqcap_{K}$ by
\begin{equation}
\sqcap_{K}
\undertilde{\sigma} =
\sum_{i=1}^{4}
D_i^{\rm QRT}(\usigma)\uphi{}_i
\quad
\forall
\usigma \in \uH^1(K).
\label{def:local interpolation of rot}
\end{equation}
Furthermore, given a family of ${\rm QRT}$ elements $(K_i,P_{K_i}^{\rm QRT},D_{K_i}^{\rm QRT})$ in a subdivision
$\mathcal{J}_{h}$,
define the global interpolation operator
$\sqcap_{h}$ by
\begin{equation}
\sqcap_{h}\undertilde{\sigma}|_{K_{i}}=
\sqcap_{K_{i}}\undertilde{\sigma}
\quad
\forall K_{i}\in\mathcal{J}_{h}.
\label{def:global rot interpolation}
\end{equation}

Finally, for any $q\in L^2(\Omega)$ define the interpolation operator $P_h$ by
$P_hq|_{K_i}= P_{K_{i}}q, \forall K_{i}\in \mathcal{J}_h$.

%%
%
%
%\subsubsection{\color{red}Interpolations associated with the finite elements}
%
%In the subsection we introduce three type interpolation operators.
%
%
%
%
%%
%%
%\subsection{\color{red}Exact sequences}

%
\subsubsection{Exact sequences on a quadrilateral}
\label{subsubsec:exact sequence on a quad}

\begin{theorem}
	The commutative diagram holds as below:
	\begin{equation*}
	\begin{array}{ccccccccc}
	\mathbb{R} & ~~~\longrightarrow~~~ & H^2(K) & ~~~\xrightarrow{\bs{\mrm{\nabla}}}~~~ & \uH^1(K) & ~~~\xrightarrow{\mrm{rot}}~~~ & L^2(K)  \\
	%	& ~~~\xrightarrow{\int_K\cdot}~~~ & \mathbb{R}  \\
	& & \downarrow J_K & & \downarrow \sqcap_{K} & & \downarrow P_K & & \\
	\mathbb{R} & \longrightarrow & P_K^{\rm QBL} & \xrightarrow{\bs{\mrm{\nabla}}} & P_K^{\rm QRT} & \xrightarrow{\mrm{rot}} & \mathbb{R}.
	%	& \xrightarrow{\int_K\cdot} & \mathbb{R}.
	\end{array}
	\end{equation*}
	\label{thm:commutativity on a cell}
\end{theorem}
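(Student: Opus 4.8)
The plan is to verify the two squares separately, in each case reducing the claimed operator identity to a comparison of the four degrees of freedom, and exploiting that the interpolation operators reproduce their own degrees of freedom (an immediate consequence of the unisolvence already checked above, since $J_Ku(A_k)=\sum_i u(A_i)\phi_i(A_k)=u(A_k)$ and $D_i^{\rm QRT}(\sqcap_K\usigma)=\sum_j D_j^{\rm QRT}(\usigma)\,D_i^{\rm QRT}(\uphi{}_j)=D_i^{\rm QRT}(\usigma)$). The leftmost square is trivial: since $1\in P_K^{\rm QBL}$, the nodal basis reproduces constants, $\sum_i\phi_i\equiv 1$, so $J_K$ acts as the identity on $\mathbb{R}$.

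For the left square $\sqcap_K\circ\bs{\nabla}=\bs{\nabla}\circ J_K$ on $H^2(K)$, I would first record the inclusion $\bs{\nabla}P_K^{\rm QBL}\subset P_K^{\rm QRT}$, which holds because $\nabla\xi,\nabla\eta\in P_K^{\rm QRT}$ and $\nabla(\xi\eta)=\xi\nabla\eta+\eta\nabla\xi\in P_K^{\rm QRT}$. Hence $\bs{\nabla}J_Ku$ and $\sqcap_K\bs{\nabla}u$ both lie in $P_K^{\rm QRT}$, and by unisolvence it suffices to show they carry the same values of $D_i^{\rm QRT}$. Since $\sqcap_K$ reproduces its degrees of freedom, $D_i^{\rm QRT}(\sqcap_K\bs{\nabla}u)=D_i^{\rm QRT}(\bs{\nabla}u)=\dashint_{e_i}\bs{\nabla}u\cdot\ut{}_i\,\mathrm{d}s$, which by the fundamental theorem of calculus along $e_i$ equals $|e_i|^{-1}$ times the difference of the values of $u$ at the two endpoints of $e_i$. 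On the other side, $D_i^{\rm QRT}(\bs{\nabla}J_Ku)$ equals $|e_i|^{-1}$ times the corresponding endpoint difference of $J_Ku$; as $J_Ku$ interpolates $u$ at all vertices, the two endpoint differences coincide. Agreement of all four degrees of freedom then forces $\sqcap_K\bs{\nabla}u=\bs{\nabla}J_Ku$.

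For the right square $P_K\circ\rot=\rot\circ\sqcap_K$ on $\uH^1(K)$, I would first establish the structural fact that $\rot$ maps $P_K^{\rm QRT}$ into constants: $\rot\nabla\xi=\rot\nabla\eta=0$, while a direct computation gives $\rot(\xi\nabla\eta)=\nabla\xi\times\nabla\eta$ and $\rot(\eta\nabla\xi)=-\nabla\xi\times\nabla\eta$, both constant (this is also what legitimizes the bottom-right entry $\mathbb{R}$). Consequently $\rot\sqcap_K\usigma$ is a constant and equals its own average over $K$. I would then apply Green's theorem, $\int_K\rot\sqcap_K\usigma\,\mathrm{d}x=\int_{\partial K}\sqcap_K\usigma\cdot\ut\,\mathrm{d}s=\sum_{i=1}^4|e_i|\,D_i^{\rm QRT}(\sqcap_K\usigma)$, replace $D_i^{\rm QRT}(\sqcap_K\usigma)$ by $D_i^{\rm QRT}(\usigma)$, and use Green's theorem in reverse to rewrite the sum as $\int_K\rot\usigma\,\mathrm{d}x$. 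Dividing by $|K|$ identifies the constant as $P_K\rot\usigma$, giving $\rot\sqcap_K\usigma=P_K\rot\usigma$.

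The only delicate part is bookkeeping: one must fix the orientation conventions so that the counterclockwise tangential direction used in the degrees of freedom $D_i^{\rm QRT}$, the counterclockwise boundary traversal in Green's theorem, and the endpoint differences produced by the fundamental theorem of calculus all carry consistent signs. The genuine structural input, and the step I expect to be the crux, is the verification that $\rot P_K^{\rm QRT}\subset\mathbb{R}$, since this is precisely what makes the bottom row a complex terminating in $\mathbb{R}$ and justifies the ``constant equals its average'' reduction in the second square.
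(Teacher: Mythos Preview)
Your proof of the commutativity is correct, and for the left square it is cleaner than the paper's. The paper expands $\sqcap_K\usigma$ explicitly as $g_1\nabla\xi+g_2\nabla\eta+g_3\xi\nabla\eta+g_4\eta\nabla\xi$, writes out closed formulas for $g_1,\dots,g_4$ in terms of the edge averages $P_{e_i}(\usigma)$, substitutes $\usigma=\nabla u$, and verifies by direct calculation that the result matches $\nabla J_Ku$. Your argument---noting that both $\sqcap_K\nabla u$ and $\nabla J_Ku$ lie in $P_K^{\rm QRT}$ and then matching the four tangential degrees of freedom via the fundamental theorem of calculus on each edge together with the vertex-interpolation property of $J_K$---avoids all of this algebra and is coordinate-free. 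For the right square the two arguments are essentially the same Green's-theorem computation, the paper just phrasing it through the explicit identity $\rot(\sqcap_K\usigma)=(g_3-g_4)/(\ur\times\us)$.

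One point to be aware of: the paper reads the theorem as also asserting exactness of the bottom row, and its proof begins by checking $\ker(\rot|_{P_K^{\rm QRT}})=\nabla P_K^{\rm QBL}$, namely that $\rot(d_1\nabla\xi+d_2\nabla\eta+d_3\xi\nabla\eta+d_4\eta\nabla\xi)=0$ forces $d_3=d_4$. Your proposal verifies only that the bottom row is a complex ($\nabla P_K^{\rm QBL}\subset P_K^{\rm QRT}$, $\rot P_K^{\rm QRT}\subset\mathbb{R}$), not that it is exact. This is a one-line addition using your own computation $\rot(\xi\nabla\eta)=-\rot(\eta\nabla\xi)=\nabla\xi\times\nabla\eta\neq 0$, so it costs nothing to include; whether it is strictly required depends on how one reads the theorem statement.
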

\begin{proof}	
	We first prove the discretized de Rham complex. Evidently ${\rm ker}(\nabla)=\mathbb{R}$ and
	$\nabla P^{\rm QBL}_K\subset P_{K}^{\rm QRT}$.
	On the other hand, $\rot P_{K}^{\rm QRT}=\mathbb{R}$.
	It remains to prove that
	${\rm ker}({\rm rot})=\nabla P^{\rm QBL}_K$. Given a $\utau\in P_K^{\rm QRT}$, such that ${\rm rot}\utau=0$. Since $\utau= d_1\nabla\xi+d_2\nabla\eta+d_3\xi\nabla\eta+d_4\eta\nabla\xi$, then we have $d_3=d_4$ and $\utau \in \nabla P^{\rm QBL}_K$.
	
	Then we are going to show that $\nabla J_K=\sqcap_{K}\nabla$ on $H^2(K)$, and $\mathrm{rot}\sqcap_{K}=P_K\mathrm{rot}$ on $\uH^1(K)$. We first prove the former. Given a $\usigma \in \uH^1(K)$, let $\sqcap_{K}\usigma=g_{1}\cdot\nabla\xi+g_{2}\cdot\nabla\eta+g_{3}\cdot\xi\nabla\eta+g_{4}\cdot\eta\nabla\xi$. By definition, we have
	\begin{align*}
	g_{1}&=
	\frac{
		(1-\alpha)(1-\beta^2)\undertilde{e}{}_{1}\cdot P_{e_{1}}(\usigma)+
		\alpha\beta(1+\beta)\undertilde{e}{}_{2}\cdot P_{e_{2}}(\usigma)-
		(1+\alpha)(1-\beta^{2})\undertilde{e}{}_{3}\cdot P_{e_{3}}(\usigma)-
		\alpha\beta(1-\beta)\undertilde{e}{}_{4}\cdot P_{e_{4}}(\usigma)
	}
	{4(\alpha^2+\beta^{2}-1)}\\
	g_{2}&=
	\frac{
		\alpha(1-\alpha)\beta\undertilde{e}{}_{1}\cdot P_{e_{1}}(\usigma)+
		(1-\alpha^{2})(1+\beta)\undertilde{e}{}_{2}\cdot P_{e_{2}}(\usigma)-
		\alpha(1+\alpha)\beta\undertilde{e}{}_{3}\cdot P_{e_{3}}(\usigma)-
		(1-\alpha^{2})(1-\beta)\undertilde{e}{}_{4}\cdot P_{e_{4}}(\usigma)
	}
	{4(\alpha^2+\beta^{2}-1)}\\
	g_{3}&=
	\frac{
		-\alpha(1-\alpha)\undertilde{e}{}_{1}\cdot P_{e_{1}}(\usigma)
		-(1-\alpha^{2}+\beta)\undertilde{e}{}_{2}\cdot P_{e_{2}}(\usigma)
		+\alpha(1+\alpha)\undertilde{e}{}_{3}\cdot P_{e_{3}}(\usigma)
		-(1-\alpha^{2}-\beta)\undertilde{e}{}_{4}\cdot P_{e_{4}}(\usigma)
	}
	{4(\alpha^2+\beta^{2}-1)}
	\\
	g_{4}&=
	\frac{
		(1-\alpha-\beta^2)\undertilde{e}{}_{1}\cdot P_{e_{1}}(\usigma)
		-\beta(1+\beta)\undertilde{e}{}_{2}\cdot P_{e_{2}}(\usigma)
		+(1+\alpha-\beta^{2})\undertilde{e}{}_{3}\cdot P_{e_{3}}(\usigma)
		+\beta(1-\beta)\undertilde{e}{}_{4}\cdot P_{e_{4}}(\usigma)
	}
	{4(\alpha^2+\beta^{2}-1)}.\\
	\end{align*}
	Now given a $u\in H^2(K)$, we take $\usigma=\nabla u\in \uH^1(K)$ and the former follows by simple calculation.
	
	It remains to prove the latter.
	Since
	$\nabla\xi=\frac{s^{\bot}}{\ur\times\us}$,  $\nabla\eta=\frac{r^{\bot}}{\ur\times\us}$,
	then for $\usigma\in\uH^1(K)$ it holds
	\begin{equation*}
	\mathrm{rot}(\sqcap_{K}\usigma)=
	(g_{3}-g_{4})\nabla\xi\times\nabla\eta=
	\frac{g_3-g_4}{\ur\times\us}=
	\frac{1}{4\ur\times\us}\int_{\partial K}
	\undertilde{\sigma}
	\cdot
	\undertilde{t}\ \mathrm{d}s=
	P_K({\rm rot}\usigma).
	\end{equation*}
	The proof is completed.
\end{proof}

\subsection{Interpolation error estimation}
\label{subsec:interpolation error}

\subsubsection{Interpolation error estimations in $L^2$ norm}
\label{subsubsec:interpolation error in L2 norm}

\begin{theorem}
	Let $K$ be a convex quadrilateral, then it holds
	\begin{equation*}
	\| u-J_{K}u\|_{0, K}
	\leqslant
	Ch^{2}_{K}|u|_{2, K}
	\quad
	\forall u\in H^2(K).
	%	\label{inequ:L2 norm for local estimation in H1}
	\end{equation*}
	%	\label{thm:L2 norm for interpolation estimation in H1}
\end{theorem}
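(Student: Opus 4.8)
The plan is to build everything on the fact that $J_K$ reproduces all affine functions. Since $\xi$ and $\eta$ are non-degenerate affine functions of $x$ (because $\ur,\us$ are linearly independent), the span $\{1,\xi,\eta\}$ is exactly $P_1(K)$, and $P_1(K)\subset P_K^{\rm QBL}$; as the QBL element is unisolvent with vertex degrees of freedom, any $p\in P_1$ is interpolated exactly, $J_Kp=p$. Reading this reproduction off the nodal basis gives the two identities
\[\sum_{i=1}^4\phi_i\equiv 1,\qquad \sum_{i=1}^4\phi_i(x)\,(A_i-x)\equiv 0,\]
which I will use to annihilate the zeroth- and first-order Taylor terms.

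First I would expand, for each vertex and any $u\in H^2(K)$, about a running point $x\in K$ with integral remainder,
\[u(A_i)=u(x)+\nabla u(x)\cdot(A_i-x)+R_i(x),\]
with
\[R_i(x)=\int_0^1(1-t)\,(A_i-x)^{T}D^2u\big((1-t)x+tA_i\big)(A_i-x)\,\mathrm dt.\]
Substituting into $J_Ku=\sum_i u(A_i)\phi_i$ and applying the two identities kills the constant and linear contributions, leaving the clean error formula
\[u-J_Ku=-\sum_{i=1}^4 R_i\,\phi_i.\]
Because $|A_i-x|\le h_K$ and $D^2u$ enters linearly, the right-hand side is pointwise bounded by $h_K^2$ times a weighted average of $|D^2u|$ along the segments from $x$ to the vertices, which is the source of both the power $h_K^2$ and the appearance of the seminorm $|u|_{2,K}$ (with no lower-order terms, thanks to exact reproduction of $P_1$).

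The main work, and the step I expect to be the real obstacle, is to bound $\|\sum_i R_i\phi_i\|_{0,K}$ by $Ch_K^2|u|_{2,K}$ with $C$ independent of $h_K$. Two points need care. First, $R_i$ contains $D^2u$ evaluated on the segments $(1-t)x+tA_i$, so to return to $\int_K|D^2u|^2$ I would use Cauchy--Schwarz in $t$ together with the change of variables $x\mapsto(1-t)x+tA_i$ (Jacobian $(1-t)^2$), the integrable behaviour as $t\to1$ being absorbed by the $(1-t)$ Taylor weight, exactly as in the classical scalar interpolation estimate. Second, the factors $\phi_i$ must be uniformly bounded in $L^\infty(K)$: from the explicit formulas \eqref{equ:local dual basis of QBL} each coefficient is a rational function of $(\alpha,\beta)$ with denominator $4(\alpha^2+\beta^2-1)$, which under the shape-regularity hypothesis stays bounded away from $0$, since regularity confines $(\alpha,\beta)$ to a compact subset of $\{|\alpha|+|\beta|<1\}$; hence $\|\phi_i\|_{\infty,K}\le C$ uniformly.

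A cleaner alternative leading to the same estimate is a scaling plus Bramble--Hilbert argument. I would dilate $K$ to a unit-diameter quadrilateral $\hat K$; this is a similarity, so the shape parameters $\alpha,\beta$ are preserved, $\xi,\eta$ are scale-invariant, and hence $J_{\hat K}$ has the same structure. On $\hat K$ the operator $I-J_{\hat K}$ is bounded on $H^2(\hat K)$ (vertex evaluations are continuous via the embedding $H^2(\hat K)\hookrightarrow C^0(\overline{\hat K})$ in two dimensions) and annihilates $P_1$, so the Bramble--Hilbert lemma yields
\[\|\hat u-J_{\hat K}\hat u\|_{0,\hat K}\le C\,|\hat u|_{2,\hat K},\]
and scaling back supplies precisely the factor $h_K^2$. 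Either way, uniformity of $C$ over a regular family reduces to the compactness of the admissible $(\alpha,\beta)$-region and the continuous dependence of $\{\phi_i\}$ on $(\alpha,\beta)$; verifying that shape regularity keeps $(\alpha,\beta)$ off the degenerate boundary $|\alpha|+|\beta|=1$ is the delicate bookkeeping I would be most careful about.
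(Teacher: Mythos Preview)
Your primary argument---Taylor expansion with integral remainder about a running point, cancellation of the zeroth and first order terms via $J_Kp=p$ for $p\in P_1$, and the change of variables $x\mapsto(1-t)x+tA_i$ whose Jacobian $(1-t)^2$ cancels the squared Taylor weight---is exactly the proof the paper gives. Your explicit discussion of the uniform $L^\infty$ bound on the $\phi_i$ (which the paper silently absorbs into $C$) and your Bramble--Hilbert alternative are welcome additions but not needed to match the paper.
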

\begin{proof}
	By density, it suffices to consider $u\in C^{2}(\bar{K})$.
	Let $A$ be any point in the quadrilateral $K$ with vertexes $\{A_i\}_{i=1:4}$.
	Using Taylor expansion with integral remainder, we have
	\begin{equation*}
	u(A_{i})=u(A)+\nabla u(A)\cdot(A_{i}-A)+R_{i}(A),
	\quad
	R_{i}(A)=\int_{0}^{1}
	(1-t)
	\frac{\mathrm{d}^{2}u}
	{\mathrm{d} t^{2}}
	(\xi_{i},\eta_{i})
	\ \mathrm{d}t.
	\end{equation*}
	Here
	$\xi_{i}=tx_{i}+(1-t)x$,
	$\eta_{i}=ty_{i}+(1-t)y$.
	
	Since $J_{K}$ preserves linear polynomial
	\begin{equation*}
	J_{K}u(A)
	=\sum_{i=1}^{4}
	u(A_{i})\phi_{i}(A)
	=u(A)+\sum_{i=1}^{4}R_{i}(A)\phi_{i}(A).
	\end{equation*}
	Then we obtain
	\begin{equation*}
	u(A)-J_{K}u(A)=-\sum_{i=1}^{4}R_{i}(A)\phi_{i}(A).
	\end{equation*}
	
	Evidently
	\begin{align*}
	|R_{i}(A)|^{2}
	&=|\int_{0}^{1}(1-t)
	(\frac{\partial^{2}u}{\partial \xi_{i}^{2}}(x_{i}-x)^{2}+
	2\frac{\partial^{2}u}{\partial\xi_{i}\partial\eta_{i}}(x_{i}-x)(y_{i}-y)+\frac{\partial^{2}u}{\partial \eta_{i}^{2}}(y_{i}-y)^{2})
	\ \mathrm{d}t|^{2} \\
	&\leqslant
	4h_{K}^{4}\int_{0}^{1}(1-t)^{2}
	\sum_{|m|=2}
	|\partial^{m}u(\xi_{i}, \eta_{i})|^2
	\ \mathrm{d}t.
	\end{align*}
	\begin{equation*}
	\| u-J_{K}u\|_{0, K}^{2}
	\leqslant
	Ch_{K}^{4}\sum_{i=1}^{4}\sum_{|m|=2}
	\int_{0}^{1}(1-t)^2\int_{K}|\partial^{m}u(\xi_{i}, \eta_{i})|^2\ \mathrm{d}x\mathrm{d}y\mathrm{d}t.
	\end{equation*}
	Take integral variable substitution:
	$\mathrm{d}\xi_{i}=(1-t)\mathrm{d}x$,
	$\mathrm{d}\eta_{i}=(1-t)\mathrm{d}y$,
	then we have
	\begin{equation*}
	\| u-J_{K}u\|_{0, K}^{2}
	\leqslant
	Ch_{K}^{4}
	\sum_{i=1}^{4}\sum_{|m|=2}
	\int_{0}^{1}\int_{K}
	|\partial^{m}u(\xi_{i}, \eta_{i})|^2
	\ \mathrm{d}\xi_{i}\mathrm{d}\eta_{i}\mathrm{d}t
	=Ch_{K}^{4}|u|_{2, K}^2.
	\end{equation*}
	The proof is completed.
\end{proof}

\begin{theorem}
	Let $K$ be a convex quadrilateral,
	then it holds
	\begin{equation}
	\|\usigma-\sqcap_{K}\usigma\|_{0, K}
	\leqslant Ch_{K}|\usigma|_{1, K}
	\qquad\forall\usigma\in\uH^1(K).
	\label{inequ:L2 norm for local estimation in H(rot)}
	\end{equation}
	\label{thm:L2 norm for interpolation estimation in H(rot)}
\end{theorem}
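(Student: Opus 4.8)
The plan is to exploit that $\sqcap_{K}$ is a projection onto $P_K^{\rm QRT}$ which reproduces constant vector fields, and then to combine a Poincar\'e inequality with explicit size estimates for the degrees of freedom and for the local basis. First I would record the invariance of constants: since $\nabla\xi=\frac{s^{\bot}}{\ur\times\us}$ and $\nabla\eta=\frac{r^{\bot}}{\ur\times\us}$ are constant vectors spanning $\mathbb{R}^2$ (because $\ur,\us$ are independent), every constant field $\uc$ lies in $P_K^{\rm QRT}$; together with $D_i^{\rm QRT}(\uphi{}_j)=\delta_{ij}$ this gives $\sqcap_{K}\uc=\uc$. Taking $\uc=P_K\usigma$, the mean of $\usigma$ over $K$, and setting $\utau=\usigma-P_K\usigma$, linearity yields
\[
\usigma-\sqcap_{K}\usigma=\utau-\sqcap_{K}\utau,
\]
so that $\|\usigma-\sqcap_{K}\usigma\|_{0,K}\le\|\utau\|_{0,K}+\|\sqcap_{K}\utau\|_{0,K}$. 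The first term is handled by the Poincar\'e inequality, $\|\utau\|_{0,K}\le Ch_K|\usigma|_{1,K}$, with a constant depending only on the shape regularity.

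For the second term I would estimate $\|\sqcap_{K}\utau\|_{0,K}\le\sum_{i=1}^4|D_i^{\rm QRT}(\utau)|\,\|\uphi{}_i\|_{0,K}$ factor by factor. For the functionals, bound the edge average by the edge $L^2$ norm, $|D_i^{\rm QRT}(\utau)|\le|e_i|^{-1/2}\|\utau\|_{0,e_i}$, and then invoke the scaled trace inequality $\|\utau\|_{0,e_i}^2\le C(h_K^{-1}\|\utau\|_{0,K}^2+h_K|\utau|_{1,K}^2)$. Since $\|\utau\|_{0,K}\le Ch_K|\usigma|_{1,K}$ and $|\utau|_{1,K}=|\usigma|_{1,K}$, this gives $\|\utau\|_{0,e_i}^2\le Ch_K|\usigma|_{1,K}^2$, hence $|D_i^{\rm QRT}(\utau)|\le C|\usigma|_{1,K}$ using $|e_i|\simeq h_K$. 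For the basis, I would read off from \eqref{equ:local dual basis of QRT} that each $\uphi{}_i$ equals $|e_i|/(4(\alpha^2+\beta^2-1))$ times an $O(1)$ combination of $\nabla\xi,\nabla\eta,\xi\nabla\eta,\eta\nabla\xi$; since $|\nabla\xi|,|\nabla\eta|\simeq h_K^{-1}$, the functions $\xi,\eta$ are $O(1)$ on $K$, and $|K|\simeq h_K^2$, one obtains $\|\uphi{}_i\|_{0,K}\le Ch_K$. Multiplying the two bounds gives $\|\sqcap_{K}\utau\|_{0,K}\le Ch_K|\usigma|_{1,K}$, and adding the Poincar\'e term completes \eqref{inequ:L2 norm for local estimation in H(rot)}.

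The step requiring the most care, and the place where the regularity hypothesis really enters, is the passage from the explicit formulas to the uniform bound $\|\uphi{}_i\|_{0,K}\le Ch_K$ with $C$ independent of $K$. This needs $|e_i|\simeq h_K$, $\ur\times\us\simeq h_K^2$, and, crucially, that the denominator $\alpha^2+\beta^2-1$ stays bounded away from $0$: convexity alone only gives $\alpha^2+\beta^2<1$, whereas here I need $1-(\alpha^2+\beta^2)\ge\delta>0$ uniformly, which must be drawn from the shape regularity indicator $\mathcal{R}_K$. The same comparabilities underlie the scaled trace and Poincar\'e constants. An alternative, more structural route would be to observe that the affine coordinates $(\xi,\eta)$ make $\sqcap_{K}$ covariant-Piola equivalent to a fixed Raviart--Thomas-type interpolation on a reference quadrilateral $\hat K(\alpha,\beta)$, reduce \eqref{inequ:L2 norm for local estimation in H(rot)} to a Bramble--Hilbert estimate there, and scale back; but this merely relocates the same difficulty into proving that the reference family $\{\hat K(\alpha,\beta)\}$ stays uniformly nondegenerate, i.e.\ again that $1-(\alpha^2+\beta^2)$ is bounded below.
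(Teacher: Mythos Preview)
Your proposal is correct but takes a genuinely different route from the paper.

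The paper does \emph{not} argue via constant reproduction and direct size estimates. Instead it first establishes an $H^1$-stability lemma, $|\sqcap_K\usigma|_{1,K}\le C|\usigma|_{1,K}$, by introducing an auxiliary interpolation $\sqcap_Q:\uH^1(K)\to\undertilde{\rm span}\{1,\xi,\eta,\xi^2-\eta^2\}$ defined by the same edge averages, writing both $|\sqcap_K\usigma|_{1,K}^2$ and $|\sqcap_Q\usigma|_{1,K}^2$ as quadratic forms $\up^TD\up$ and $\up^TB\up$ in the vector $\up$ of edge averages, showing $\ker B\subset\ker D$, and then comparing via Rayleigh quotients (the stability of $\sqcap_Q$ itself being quoted from earlier work). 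With stability in hand, the paper observes that $\usigma-\sqcap_K\usigma$ has vanishing tangential averages on all four edges, splits $K$ along a diagonal, and invokes a Poincar\'e inequality of Carstensen type to obtain $\|\usigma-\sqcap_K\usigma\|_{0,K}\le Ch_K|\usigma-\sqcap_K\usigma|_{1,K}\le Ch_K|\usigma|_{1,K}$.

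Your argument is more elementary and self-contained: it avoids the auxiliary operator $\sqcap_Q$ and the matrix comparison entirely. What the paper's route buys is the $H^1$-stability of $\sqcap_K$ as a reusable lemma; it is invoked again (through the identity $\nabla J_K=\sqcap_K\nabla$) to prove the energy-norm estimate for $J_K$, and later, via $|J_hz|_{2,K}\le C|z|_{2,K}$, in the duality argument for the $L^2$-error of the Poisson scheme. As for your worry about $1-(\alpha^2+\beta^2)$ being bounded below: the paper faces exactly the same issue, since its matrix $D$ is built from the coefficients $g_i$ carrying the same denominator $4(\alpha^2+\beta^2-1)$; the paper simply asserts that $D$ has ``$\mathcal{O}(1)$ elements'', so this bound is tacitly assumed there as well and is not something your approach is missing relative to the paper.
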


We postpone the proof of the theorem after a technical lemma.
Define a new interpolation operator
$\sqcap_{Q}: \uH^1(K)\to \undertilde{\rm{span}}\{1,\xi,\eta,\xi^{2}-\eta^{2}\}$ by
$\dashint_{e_{i}}\usigma\ \mathrm{d}s=
\dashint_{e_{i}}\sqcap_{Q}\usigma\ \mathrm{d}s, i=1:4$.
Evidently $\sqcap_{Q}$ is well-defined.

\begin{lemma}
	The local interpolation operator $\sqcap_{K}$ is $H^{1}$ stable, namely
	\begin{equation*}
	|\sqcap_{K}\usigma|_{1, K}\leqslant C|\usigma|_{1, K}
	\qquad\forall \usigma\in\uH^1(K).
	\end{equation*}
	\label{lemma:H^{1} stable on interpolation associated with QRT}	
\end{lemma}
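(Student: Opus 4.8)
The plan is to exploit the auxiliary operator $\sqcap_Q$ just introduced, which factors the seminorm estimate into two more tractable pieces, and to reduce everything to a reference configuration by an affine change of variables. The starting observation is that the tangential degrees of freedom of the ${\rm QRT}$ element see $\usigma$ only through its edge averages: since each $e_i$ is a segment, $\ut_i$ is constant along $e_i$, and hence $D_i^{\rm QRT}(\usigma)=\ut_i\cdot\dashint_{e_i}\usigma\,\dS$. Because $\sqcap_Q\usigma$ has, by its defining property, the same (full, vector-valued) edge averages as $\usigma$ on every $e_i$, we get $D_i^{\rm QRT}(\sqcap_Q\usigma)=D_i^{\rm QRT}(\usigma)$ and therefore the factorization $\sqcap_K\usigma=\sqcap_K(\sqcap_Q\usigma)$ for all $\usigma\in\uH^1(K)$. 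Consequently it suffices to establish the two separate estimates $|\sqcap_Q\usigma|_{1,K}\leqslant C|\usigma|_{1,K}$ and $|\sqcap_K p|_{1,K}\leqslant C|p|_{1,K}$ for $p$ ranging over $\undertilde{\rm span}\{1,\xi,\eta,\xi^2-\eta^2\}$, and then to compose them.

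Next I would set up the affine reduction. Writing $B=[\ur,\us]$ for the matrix whose columns are the frame vectors and $F_K\colon\hat K\to K$, $x=B\hat x+c$, for the associated affine map, the coordinate functions satisfy $\xi=\hat\xi\circ F_K^{-1}$ and $\eta=\hat\eta\circ F_K^{-1}$, where $\hat\xi,\hat\eta$ are the Cartesian coordinates on $\hat K$; thus $\hat K$ is a quadrilateral of fixed combinatorics whose vertices depend on the two shape parameters $\alpha,\beta$ alone. Transporting vector fields by the covariant rule $\usigma=B^{-T}(\hat\usigma\circ F_K^{-1})$ preserves the tangential edge integrals and carries $P_{\hat K}^{\rm QRT}$ onto $P_K^{\rm QRT}$, so both $\sqcap_Q$ and $\sqcap_K$ commute with the transform. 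Since the $H^1$-seminorm of a covariantly transported field and that of its reference counterpart differ only by factors depending on $B$, and these same factors appear on both sides of each inequality, the two target estimates are scale-invariant and it is enough to prove them on the reference family $\{\hat K(\alpha,\beta)\}$.

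On the reference element the estimate for $\sqcap_Q$ is a Bramble--Hilbert argument: $\sqcap_Q$ reproduces constant vectors (constants lie in its range and $\sqcap_Q$ is a projection by unisolvence), so $|\sqcap_Q\hat\usigma|_{1,\hat K}=|\sqcap_Q(\hat\usigma-\overline{\hat\usigma})|_{1,\hat K}$; bounding the edge-average functionals by the trace theorem yields $\leqslant C\|\hat\usigma-\overline{\hat\usigma}\|_{1,\hat K}$, and Poincar\'e converts the full norm back into the seminorm $|\hat\usigma|_{1,\hat K}$. The bound for $\sqcap_K$ on $\undertilde{\rm span}\{1,\hat\xi,\hat\eta,\hat\xi^2-\hat\eta^2\}$ is then a finite-dimensional norm-equivalence statement: $\sqcap_K$ likewise annihilates constant vectors, and on the remaining finite-dimensional space all norms are mutually equivalent, with equivalence constants governed by the explicit coefficients of the dual basis $\{\uphi_i\}$ written out before Theorem~\ref{thm:commutativity on a cell}. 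Composing, $|\sqcap_K\usigma|_{1,K}=|\sqcap_K\sqcap_Q\usigma|_{1,K}\leqslant C|\sqcap_Q\usigma|_{1,K}\leqslant C|\usigma|_{1,K}$.

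The single genuine obstacle is the uniformity of all these constants across the shape family. The coefficients of the $\uphi_i$ carry the factor $(\alpha^2+\beta^2-1)^{-1}$, which degenerates exactly when the quadrilateral collapses, so the crux is to show that the regularity of $\{\mathcal{J}_h\}$, together with the strict convexity $|\alpha|+|\beta|<1$, confines $(\alpha,\beta)$ to a compact subset on which $\alpha^2+\beta^2$ stays bounded away from $1$, keeps the condition number of $B$ and the edge lengths $|e_i|/h_K$ controlled, and hence makes the $\uphi_i$ uniformly bounded in $\bigl(W^{1,\infty}(\hat K)\bigr)^2$ while keeping the trace and Poincar\'e constants on $\hat K(\alpha,\beta)$ bounded. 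Once this uniform control is secured, everything else is routine bookkeeping of the affine scaling.
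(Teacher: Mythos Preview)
Your plan is sound and shares with the paper the decisive factorization $\sqcap_K\usigma=\sqcap_K(\sqcap_Q\usigma)$, which both of you justify by the observation that the ${\rm QRT}$ degrees of freedom see $\usigma$ only through its edge averages. From there the two arguments diverge in execution. The paper does not pass to a reference element; instead it writes both $|\sqcap_Q\usigma|_{1,K}^2$ and $|\sqcap_K\usigma|_{1,K}^2$ as explicit quadratic forms $\up^TB\up$ and $\up^TD\up$ in the vector $\up$ of edge averages, checks $\ker B\subset\ker D$ (precisely your ``$\sqcap_K$ kills constants because $\sqcap_Q$ does'' observation), and then computes the nonzero eigenvalues of $B$ in closed form to bound the Rayleigh quotient $\up^TD\up/\up^TB\up$. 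The $H^1$-stability of $\sqcap_Q$ itself is not reproved but quoted from \cite{zhang2016stable}. Your route---Bramble--Hilbert for $\sqcap_Q$ and a soft norm-equivalence/compactness argument for the finite-dimensional step---is more conceptual and would generalize more easily to higher order; the paper's explicit algebra buys concrete, trackable constants.

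One caution on the point you flag as the ``single genuine obstacle''. The shape-regularity indicator $\mathcal{R}_K$ defined in this paper depends only on $\ur,\us$ (i.e.\ on the midpoint parallelogram) and does \emph{not} involve $(\alpha,\beta)$ at all; in particular, a square midpoint parallelogram gives $\mathcal{R}_K=1$ for every admissible $(\alpha,\beta)$. So the sentence ``the regularity of $\{\mathcal{J}_h\}$ \ldots\ confines $(\alpha,\beta)$ to a compact subset on which $\alpha^2+\beta^2$ stays bounded away from $1$'' does not follow from the definition used here, and your compactness argument cannot be closed on that basis alone. The paper's explicit route is less exposed to this: the nonzero eigenvalues $\lambda_2,\lambda_3,\lambda_4$ of $B$ are written down and are bounded below uniformly in $(\alpha,\beta)$ over $|\alpha|+|\beta|<1$, so the only remaining uniformity claim is that $D$ has $\mathcal{O}(1)$ entries, which the paper asserts (and which, upon inspection, relies on cancellations between the $(\alpha^2+\beta^2-1)^{-1}$ factors in the $g_i$ and the vanishing edge lengths near the degenerate boundary). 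If you keep your soft approach, you will need either to strengthen the regularity hypothesis to one that genuinely bounds $(\alpha,\beta)$ away from $|\alpha|+|\beta|=1$, or to verify those cancellations directly.
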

\begin{proof}
	Let
	$
	\sqcap_{Q}\usigma=
	\undertilde{d}{}_{1}\cdot 1+
	\undertilde{d}{}_{2}\cdot\xi+
	\undertilde{d}{}_{3}\cdot\eta+
	\undertilde{d}{}_{4}\cdot(\xi^{2}-\eta^{2})
	$, then by definition we have
	\begin{align*}
	\undertilde{d}{}_{1}&=
	\frac{\alpha^{2}-\beta^{2}+2}{8}P_{e_{1}}(\usigma)+
	\frac{-\alpha^{2}+\beta^{2}+2}{8}P_{e_{2}}(\usigma)+
	\frac{\alpha^{2}-\beta^{2}+2}{8}P_{e_{3}}(\usigma)+
	\frac{-\alpha^{2}+\beta^{2}+2}{8}P_{e_{4}}(\usigma)\\
	\undertilde{d}{}_{2}&=
	-\frac{\beta}{4}P_{e_{1}}(\usigma)+
	\frac{\beta-2}{4}P_{e_{2}}(\usigma)-
	\frac{\beta}{4}P_{e_{3}}(\usigma)+
	\frac{\beta+2}{4}P_{e_{4}}(\usigma)\\
	\undertilde{d}{}_{3}&=
	\frac{\alpha+2}{4}P_{e_{1}}(\usigma)-
	\frac{\alpha}{4}P_{e_{2}}(\usigma)+
	\frac{\alpha-2}{4}P_{e_{3}}(\usigma)-
	\frac{\alpha}{4}P_{e_{4}}(\usigma)\\
	\undertilde{d}{}_{4}&=
	-\frac{3}{8}P_{e_{1}}(\usigma)+
	\frac{3}{8}P_{e_{2}}(\usigma)-
	\frac{3}{8}P_{e_{3}}(\usigma)+
	\frac{3}{8}P_{e_{4}}(\usigma).
	\end{align*}
	
	Denote by row vector
	$\usigma=(\sigma_{1},\sigma_{2})$,
	$\undertilde{p}{}_{1}=(P_{e_{i}}(\sigma_{1}))_{i=1:4}$,
	$\undertilde{p}{}_{2}=(P_{e_{i}}(\sigma_{2}))_{i=1:4}$,
	and column vector
	$\undertilde{p}=
	(\undertilde{p}{}_{1},\undertilde{p}{}_{2})^{T}$.
	Denote $\undertilde{=}$ by $=$ up to a constant,
	independent of diameter $h_{K}$,
	then
	\begin{align*}
	\int_{K}|\nabla(\sqcap_{Q}\usigma)|^{2}\ \mathrm{d}x
	&\undertilde{=}
	h_{K}^{-2}\int_{K}
	|\partial_{\xi}(\sqcap_{Q}\usigma)|^{2}+
	|\partial_{\eta}(\sqcap_{Q}\usigma)|^{2}
	\ \mathrm{d}x
	=h_{K}^{-2}\int_{K}|\undertilde{d}{}_{2}+2\undertilde{d}{}_{4}\xi|^{2}+|\undertilde{d}{}_{3}-2\undertilde{d}{}_{4}\eta|^{2}\ \mathrm{d}x \\
	&\undertilde{=}
	|\undertilde{d}{}_{2}|^{2}+
	|\undertilde{d}{}_{3}|^{2}+
	|\undertilde{d}{}_{4}|^{2}
	=\undertilde{p}{}_{1}B_{1}^{T}B_{1}\undertilde{p}{}_{1}^{T}+
	\undertilde{p}{}_{2}B_{1}^{T}B_{1}\undertilde{p}{}_{2}^{T}
	=\undertilde{p}^{T}B\undertilde{p}.
	\end{align*}
	Here $\undertilde{p}\in \mathbb{R}^{8}$ and
	
	\begin{equation*}
	B_{1}=
	\begin{bmatrix}
	-\frac{\beta}{4} & -\frac{\beta-2}{4} & -\frac{\beta}{4} & \frac{\beta+2}{4} \\
	\frac{\alpha+2}{4} & -\frac{\alpha}{4} & \frac{\alpha-2}{4} & -\frac{\alpha}{4} \\
	-\frac{3}{8} & \frac{3}{8} & -\frac{3}{8} & \frac{3}{8} \\
	\end{bmatrix},
	\quad
	B=
	\begin{bmatrix}
	B_{1}^{T}B_{1} & \Large{0} \\
	\Large{0} & B_{1}^{T}B_{1} \\
	\end{bmatrix}.
	\end{equation*}
	
	Similarly, let $\sqcap_{K}\usigma=
	g_{1}\cdot\nabla\xi+
	g_{2}\cdot\nabla\eta+
	g_{3}\cdot\xi\nabla\eta+
	g_{4}\cdot\eta\nabla\xi$, recalling $\{g_i\}_{i=1:4}$ in Theorem \ref{thm:commutativity on a cell}, then there exists a semi-positive matrix $D$ with $\mathcal{O}(1)$ elements such that
	$
	|\sqcap_{K}\undertilde{\sigma}|_{1, K}^{2}=
	\undertilde{p}^{T}D\undertilde{p}
	$.
	
	Since  $\sqcap_{Q}$ is $H^{1}$ stable (see \cite{zhang2016stable}), it remains to show that
	$
	|\sqcap_{K}\undertilde{\sigma}|_{1, K}
	\leqslant
	C
	|\sqcap_{Q}\undertilde{\sigma}|_{1, K}
	$.
	
	We first show that $\ker B\subset\ \ker D$.
	Given a
	$\undertilde{p} \in \ker B$,
	then
	$|\sqcap_{Q}\usigma|_{1, K}^{2}=0$ and $\sqcap_{Q}\usigma$ is a constant vector.
	Since
	$\sqcap_{K}\usigma=\sqcap_{K}(\sqcap_{Q}\usigma)$,
	thus $\sqcap_{K}\usigma$ is also a constant vector and 	
	$|\sqcap_{K}\usigma|_{1, K}^{2}=0$, i.e. $\undertilde{p}\in \ker D$.
	
	Subsequently, we calculate all the eigenvalues of
	$B^{T}_{1}B_{1}$,
	(below we use $\lambda_{i}=\lambda_{i}(B_{1}^{T}B_{1})$, $i=1:4$)
	\begin{align*}
	\lambda_{1}&=0
	&
	\lambda_{3}&=
	\frac{17}{32}+\frac{\alpha^{2}+\beta^{2}}{8}-
	\frac
	{
		\sqrt{
			16(\alpha^{4}+\beta^{4})+
			32\alpha^{2}\beta^{2}+
			136(\alpha^{2}+\beta^{2})+1
		}
	}
	{32}  \\
	\lambda_{2}&=\frac{1}{2}
	&
	\lambda_{4}&=
	\frac{17}{32}+\frac{\alpha^{2}+\beta^{2}}{8}+
	\frac
	{
		\sqrt{
			16(\alpha^{4}+\beta^{4})+
			32\alpha^{2}\beta^{2}+
			136(\alpha^{2}+\beta^{2})+1
		}
	}
	{32}.
	\end{align*}
	The eigenvalues
	$\lambda(B)=
	\{\lambda_{1}, \lambda_{2}, \lambda_{3}, \lambda_{4}\}$
	of matrix $B$ are all double eigenvalues.
	Let
	$\{\undertilde{\nu}{}_{1}$,
	$\undertilde{\nu}{}_{2}\}$
	be two eigenvectors subordinating to the eigenvalue
	$\lambda_{1}=0$.
	Then we can decompose
	$
	\mathbb{R}^8=
	\mathrm{span}
	\{\undertilde{\nu}{}_{1}, \undertilde{\nu}{}_{2}\}
	\oplus
	(\mathrm{span}
	\{\undertilde{\nu}{}_{1}, \undertilde{\nu}{}_{2}\})^{\bot}
	$.
	Suppose
	$
	\undertilde{p}=
	\undertilde{\psi}{}_{1}+
	\undertilde{\psi}{}_{2}
	$,
	$\undertilde{\psi}{}_{1}
	\in
	\mathrm{span}\{\undertilde{\nu}{}_{1}
	$,
	$\undertilde{\nu}{}_{2}\}$,
	$\undertilde{\psi}{}_{2}
	\in
	(\mathrm{span}\{\undertilde{\nu}{}_{1}
	,\undertilde{\nu}{}_{2}\})^{\bot}$.
	Rayleigh quotient theorem reads
	\begin{align*}
	|\sqcap_{K}\usigma|_{1, K}^{2}
	&=
	\up^{T}D\up=\upsi{}_{2}^{T}D\upsi{}_{2}\leqslant
	\lambda_{max}(D)|\upsi{}_{2}|^{2} \\
	|\sqcap_{Q}\usigma|_{1, K}^{2}
	& \undertilde{=}
	\up^{T}B\up=\upsi{}_{2}^{T}B\upsi{}_{2}\geqslant
	\text{min}
	\{\lambda_{2}, \lambda_{3}, \lambda_{4}\}|\upsi{}_{2}|^{2}.
	\end{align*}
	This finishes the proof.
\end{proof}

\paragraph{\bf Proof of Theorem \ref{thm:L2 norm for interpolation estimation in H(rot)}}
Dividing a quadrilateral $K$ along the diameter into two triangles, we have the following estimation from the similar argument referred to Theorem $5.1$ in \cite{carstensen2012explicit}.
\begin{equation*}
\| \usigma-\sqcap_{K} \usigma\|_{0, K}
\leqslant Ch_{K}\|\nabla(\usigma-\sqcap_{K}\usigma)\|_{0, K}.
\end{equation*}
This proves \eqref{inequ:L2 norm for local estimation in H(rot)} by Lemma \ref{lemma:H^{1} stable on interpolation associated with QRT}.

\begin{theorem}
	Let $K$ be a convex quadrilateral,
	then it holds
	\begin{equation*}
	\| q-P_{K}q\|_{0, K}
	\leqslant
	Ch_{K}|q|_{1, K}
	\quad
	\forall
	q\in H^1(K).
	%	\label{inequ:local interpolation estimate in L2}
	\end{equation*}
	%	Moreover, it holds for the global interpolation operator $P_{h}$
	%	\begin{equation}
	%	\| q-P_{h}q\|_{0,\Omega}
	%	\leqslant
	%	Ch|q|_{1,\Omega}
	%	\quad
	%	\forall
	%	q\in H^1(\Omega)
	%	\label{inequ:global interpolation estimate in L2}
	%	\end{equation}
	%	\label{thm:interpolation estimate in L2}
\end{theorem}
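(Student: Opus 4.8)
The plan is to establish the standard $L^2$ approximation estimate for the cell-average projector $P_K$, which maps $q$ to its integral mean $\dashint_K q\,\mathrm{d}x$ over the quadrilateral $K$. Since $P_K q$ is the constant equal to the average of $q$ on $K$, the quantity $q - P_K q$ is exactly the difference between $q$ and its mean value, and the estimate sought is nothing but a Poincar\'e--Wirtinger (mean-zero Poincar\'e) inequality on the cell $K$ with the correct $h_K$ scaling. First I would record that $P_K$ reproduces constants, i.e. $P_K c = c$ for any constant $c$, so that by the triangle inequality one may write, for any constant $c$,
\begin{equation*}
\|q - P_K q\|_{0,K} = \|(q-c) - P_K(q-c)\|_{0,K} \leqslant \|q-c\|_{0,K} + \|P_K(q-c)\|_{0,K}.
\end{equation*}
Choosing $c = \dashint_K q\,\mathrm{d}x$ and using that $P_K$ is an $L^2$-bounded projection onto constants (in fact the $L^2$-orthogonal projection onto constants, so $\|P_K w\|_{0,K}\leqslant \|w\|_{0,K}$), the problem reduces to bounding $\|q - \dashint_K q\,\mathrm{d}x\|_{0,K}$ by $Ch_K|q|_{1,K}$.

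Next I would invoke the Poincar\'e--Wirtinger inequality on the convex domain $K$, which states that for any $q\in H^1(K)$,
\begin{equation*}
\Big\|q - \dashint_K q\,\mathrm{d}x\Big\|_{0,K} \leqslant C_P(K)\,|q|_{1,K},
\end{equation*}
where $C_P(K)$ is the Poincar\'e constant of $K$. For a convex domain the classical Payne--Weinberger bound gives $C_P(K)\leqslant \mathrm{diam}(K)/\pi = h_K/\pi$, so the constant scales exactly like $h_K$ and is moreover independent of the shape of $K$ beyond its diameter. This immediately yields $\|q-P_K q\|_{0,K}\leqslant Ch_K|q|_{1,K}$ with $C$ independent of $h_K$, as desired. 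Alternatively, to keep the argument self-contained and parallel to the proofs of the preceding theorems, I could mimic the Taylor/scaling approach: the regularity of the family $\{\mathcal{J}_h\}$ (the uniform bound on the shape-regularity indicators $\mathcal{R}_K$) guarantees that the Poincar\'e constant obtained by a scaling argument on a reference configuration stays uniformly bounded in $h_K^{-1}$ times a shape-dependent constant that is controlled by $\mathcal{R}_K$.

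The main point to be careful about — and the only place where genuine content enters — is ensuring that the constant $C$ is uniform over the grid family, i.e. independent of $h_K$ and of the particular quadrilateral. If one uses the Payne--Weinberger estimate this is automatic since it depends only on the diameter and convexity, both of which are built into the assumptions ($K$ convex, $h_K$ its diameter). If instead one prefers a reference-cell/scaling argument, then the shape-dependence of the Poincar\'e constant must be absorbed into the regularity assumption on $\{\mathcal{J}_h\}$; this is where the uniform bound on $\mathcal{R}_K$ does the work. I expect no real obstacle here: the estimate is the weakest and most classical of the three interpolation bounds in this subsection, and the convexity of $K$ makes the sharp constant $h_K/\pi$ directly available, so the proof is short once the reduction to the mean-zero Poincar\'e inequality is made explicit.
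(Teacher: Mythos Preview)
Your proposal is correct and matches the paper's approach: the paper simply cites Payne--Weinberger \cite{payne1960optimal} for this estimate, which is precisely the $C_P(K)\leqslant h_K/\pi$ bound you invoke. The initial triangle-inequality reduction is unnecessary (once $c=\dashint_K q$, the term $P_K(q-c)$ vanishes identically), but the core argument is exactly right.
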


\begin{proof}
	The proof can be found in \cite{payne1960optimal}.
\end{proof}

\subsubsection{Interpolation error estimations in energy norms}
\label{subsubsec:interpolation error in energy norms}

\begin{theorem}
	Let $K$ be a convex quadrilateral, then it holds
	\begin{equation}
	|u-J_{K}u|_{1,K}
	\leqslant
	Ch_{K}|u|_{2, K}
	\quad
	\forall u\in H^2(K),
	%	\label{inequ:energy norm for local estimation in H1}
	\end{equation}
	and
	\begin{equation}
	|\usigma-\sqcap_{K}\usigma|_{\mathrm{rot},K}
	\leqslant Ch_{K}|\mathrm{rot}\usigma|_{1,K}
	\quad\forall\usigma\in H^1({\rm rot},K).
	%	\label{inequ:energy norm for local estimation in H(rot)}
	\end{equation}
	\label{thm:inequ:energy norm for interpolation estimation in H1}
\end{theorem}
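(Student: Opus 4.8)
The plan is to obtain both energy-seminorm bounds as essentially immediate consequences of the commutative diagram of Theorem~\ref{thm:commutativity on a cell} combined with the two $L^2$-norm interpolation estimates already established, namely Theorem~\ref{thm:L2 norm for interpolation estimation in H(rot)} for $\sqcap_K$ and the $P_K$ estimate $\|q-P_Kq\|_{0,K}\leqslant Ch_K|q|_{1,K}$. The guiding idea is that applying a derivative to an interpolation error shifts it one step along the sequence $J_K\rightsquigarrow\sqcap_K\rightsquigarrow P_K$: differentiating the $J_K$-error produces a $\sqcap_K$-error, and taking $\rot$ of the $\sqcap_K$-error produces a $P_K$-error. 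No genuinely new approximation theory is needed; the estimates of Section~\ref{subsubsec:interpolation error in L2 norm} do all the work.

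First I would treat the ${\rm QBL}$ estimate. Since $|u-J_Ku|_{1,K}=\|\nabla(u-J_Ku)\|_{0,K}$, and since the commutativity $\nabla J_K=\sqcap_K\nabla$ holds on $H^2(K)$, I would write $\nabla(u-J_Ku)=\nabla u-\sqcap_K(\nabla u)$. Because $u\in H^2(K)$ gives $\nabla u\in\uH^1(K)$, Theorem~\ref{thm:L2 norm for interpolation estimation in H(rot)} applies with $\usigma=\nabla u$, yielding
\[
|u-J_Ku|_{1,K}=\|\nabla u-\sqcap_K(\nabla u)\|_{0,K}\leqslant Ch_K|\nabla u|_{1,K}=Ch_K|u|_{2,K}.
\]

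Next I would treat the ${\rm QRT}$ estimate, reading $|\cdot|_{\rot,K}$ as the $\rot$-seminorm $\|\rot\,\cdot\|_{0,K}$ (complementary to the full norm $\|\cdot\|_{\rot,K}$). Using the commutativity $\rot\,\sqcap_K=P_K\,\rot$ on $\uH^1(K)$, I would write $\rot(\usigma-\sqcap_K\usigma)=\rot\,\usigma-P_K(\rot\,\usigma)$. Since $\usigma\in H^1(\rot,K)$ guarantees $\rot\,\usigma\in H^1(K)$, the $P_K$ estimate applies with $q=\rot\,\usigma$, giving
\[
|\usigma-\sqcap_K\usigma|_{\rot,K}=\|\rot\,\usigma-P_K(\rot\,\usigma)\|_{0,K}\leqslant Ch_K|\rot\,\usigma|_{1,K}.
\]

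I do not expect a substantive obstacle here: once Theorem~\ref{thm:commutativity on a cell} and the two $L^2$ estimates are in hand, both inequalities drop out. The only points demanding care are bookkeeping ones, namely verifying that the regularity hypotheses feed the correct operand into each $L^2$ estimate ($\nabla u\in\uH^1(K)$ in the first case, $\rot\,\usigma\in H^1(K)$ in the second) and fixing the convention that $|\cdot|_{\rot,K}$ denotes the $\rot$-seminorm, so that a right-hand side depending only on $|\rot\,\usigma|_{1,K}$ is indeed the correct target. Had the full norm $\|\usigma-\sqcap_K\usigma\|_{\rot,K}$ been intended, the $L^2$ part would additionally invoke Theorem~\ref{thm:L2 norm for interpolation estimation in H(rot)} and contribute a further $Ch_K|\usigma|_{1,K}$ term; a constant vector field (for which $\rot\,\usigma=0$ but $\sqcap_K\usigma$ need not reproduce $\usigma$ unless it lies in $\nabla P_K^{\rm QBL}$) confirms that the $L^2$ error cannot be controlled by $|\rot\,\usigma|_{1,K}$ alone, which is precisely why the seminorm reading is the one consistent with the stated right-hand side.
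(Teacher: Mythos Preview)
Your proof is correct and matches the paper's own argument exactly: both estimates drop out of the commutative diagram (Theorem~\ref{thm:commutativity on a cell}) combined with the $L^2$ bounds for $\sqcap_K$ and $P_K$, just as you describe. One tangential slip in your closing discussion: constant vector fields \emph{do} lie in $P_K^{\rm QRT}=\mathrm{span}\{\nabla\xi,\nabla\eta,\xi\nabla\eta,\eta\nabla\xi\}$ and are therefore reproduced exactly by $\sqcap_K$, so that example does not illustrate what you intend---but this has no bearing on the proof itself.
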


\begin{proof}
	We prove the estimations by the commutative diagrams.
	%	We first prove \eqref{inequ:energy norm for local estimation in H1}.
	Since $\nabla (J_K)=\sqcap_{K}(\nabla)$ on $H^2(K)$, we have
	\begin{equation*}
	|u-J_Ku|_{1,K} =
	\|\nabla u -\sqcap_{K}(\nabla u)\|_{0,K}
	\leqslant
	Ch_K|u|_{2,K}.
	\end{equation*}
	Similarly, since ${\rm rot}\sqcap_{K}=P_K{\rm rot}$ on $\uH^1(K)$, we have
	\begin{equation*}
	\|\mathrm{rot} \usigma-\mathrm{rot}(\sqcap_{K}\usigma)\|_{0,K}=
	\|\mathrm{rot} \usigma-P_K(\mathrm{rot}\usigma)\|_{0,K}\leqslant
	Ch_K|\mathrm{rot} \usigma|_{1,K}.
	\end{equation*}
	The proof is completed.
\end{proof}

\subsection{Finite element spaces on a grid $\mathcal{J}_h$}
\label{subsec:FE sapces on a grid}

\begin{definition}
	Associated with the
	${\rm QBL}$ element,
	define the finite element spaces
	$V_{h}^{\rm QBL}$ and $V_{h0}^{\rm QBL}$ by
	\begin{equation*}
	V_{h}^{\rm QBL}
	\triangleq
	\{
	v_{h}\in L^{2}(\Omega):
	v_{h}|_{K}\in P_{K}^{\rm QBL},
	v_{h}\ \text{is continuous at two endpoints of edge}\ e\in\mathcal{E}^{i}_{h}
	\},
	\end{equation*}
	%	\label{def:P_h^1Omega}
	%	Associated with
	%	$H_{0}^{1}(\Omega)$,
	%	define the nonconforming FE space
	%	$V_{h0}^{\rm QBL}$ by
	\begin{equation*}
	\mbox{and}\ \ \
	V_{h0}^{\rm QBL}
	\triangleq
	\{
	v_{h}\in V_{h}^{\rm QBL}:
	v_{h}=0\ \text{at two endpoints of edge}\ e \in\mathcal{E}^{b}_{h}
	\}.
	\end{equation*}
	%	\label{def:P_h0^1Omega}
\end{definition}

\begin{definition}
	Associated with the
	${\rm QRT}$ element,
	define the finite element spaces
	$V_{h}^{\rm QRT}$ and $V_{h0}^{\rm QRT}$ by
	\begin{equation*}
	V_{h}^{\rm QRT}
	\triangleq
	\{
	\utau{}_{h}
	\in \uL^2(\Omega):
	\utau{}_{h}|_{K}
	\in P_{K}^{\rm QRT},
	\dashint_{e}
	\utau{}_{h}
	\cdot
	\ut{}_{e}
	\ \mathrm{d}s
	\ \mbox{is continuous at the edge}\ e\in\mathcal{E}^{i}_{h}
	\},
	\end{equation*}
	\label{def:P_h^rot,Omega}
	%	Associated with
	%	$H_{0}(\mathrm{rot},\Omega)$,
	%	define the noncomforming FE space
	%	$V_{h0}^{\rm QRT}$ by
	\begin{equation*}
	\mbox{and}\ \ \ V_{h0}^{\rm QRT}
	\triangleq
	\{
	\undertilde{\tau}{}_{h}
	\in V_{h}^{\rm QRT}:
	\dashint_{e}
	\undertilde{\tau}{}_{h}\cdot\undertilde{t}{}_{e}
	\ \mathrm{d}s=0
	\ \text{at the edge}\ e \in\mathcal{E}^{b}_{h}
	\}.
	\end{equation*}
	%	\label{def:P_h0^rot,Omega}
\end{definition}

\begin{definition}
	Define the piecewise constant finite element spaces
	$W_h$ and $W_{h0}$ by
	\begin{equation*}
	W_h\triangleq
	\{q_h\in L^2(\Omega):q_h|_K=P_Kq, q\in L^2(\Omega)\},
	\quad \mbox{and}\ \	W_{h0}\triangleq
	\{q_h\in W_h:\int_{\Omega}q_h\ \mathrm{d}x = 0\}.
	\end{equation*}
\end{definition}

%\subsubsection{\color{red}Exact sequences on a subdivision}
%\label{subsubsec:exact sequences on a subdivision}

The properties on a single cell can be generalized on a grid. We firstly adopt a lemma below.
\begin{lemma}
	It holds on the subdivision that
	\begin{equation}\label{eq:infsup1}
	\sup_{\utau{}_h\in V^{\rm QRT}_h}(\rot_h\utau{}_h,q_h)\geqslant C\|\utau{}_h\|_{\rot,h}\|q_h\|_{0,\Omega},\ \ \mbox{for\ any}\ q_h\in W_h,
	\end{equation}
	and
	\begin{equation}\label{eq:infsup2}
	\sup_{\utau{}_h\in V^{\rm QRT}_{h0}}(\rot_h\utau{}_h,q_h)\geqslant C\|\utau{}_h\|_{\rot,h}\|q_h\|_{0,\Omega},\ \ \mbox{for\ any}\ q_h\in W_{h0}.
	\end{equation}
\end{lemma}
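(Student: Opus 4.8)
The plan is to prove both inequalities by a Fortin-type argument: for each given $q_h$ I construct an explicit test function $\utau_h\in V_h^{\rm QRT}$ (resp. $V_{h0}^{\rm QRT}$) whose broken rotation reproduces $q_h$ exactly and whose $\|\cdot\|_{\rot,h}$-norm is controlled by $\|q_h\|_{0,\Omega}$. Taking this $\utau_h$ as the competitor in the supremum then yields the claim, since $(\rot_h\utau_h,q_h)=\|q_h\|_{0,\Omega}^2$ while $\|\utau_h\|_{\rot,h}\le C\|q_h\|_{0,\Omega}$. The construction couples a stable right inverse of $\rot$ at the continuous level with the interpolation $\sqcap_h$, glued together by the commutative diagram of Theorem \ref{thm:commutativity on a cell}.

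First I would produce a continuous lifting. For \eqref{eq:infsup1}, given $q_h\in W_h\subset L^2(\Omega)$, solve the Dirichlet problem $-\Delta\psi=q_h$ with $\psi\in H^1_0(\Omega)$ and set $\utau=\undertilde{\mathrm{curl}}\,\psi$; then $\rot\utau=-\Delta\psi=q_h$, $\utau\in\uH^1(\Omega)$, and by elliptic regularity $\|\utau\|_{1,\Omega}\le C\|\psi\|_{2,\Omega}\le C\|q_h\|_{0,\Omega}$. For \eqref{eq:infsup2}, given $q_h\in W_{h0}$ (hence of vanishing mean), solve instead the Neumann problem $-\Delta\psi=q_h$, $\partial_n\psi=0$, whose compatibility condition is exactly $\int_\Omega q_h\,\mathrm{d}x=0$; then $\utau=\undertilde{\mathrm{curl}}\,\psi$ again satisfies $\rot\utau=q_h$ and $\|\utau\|_{1,\Omega}\le C\|q_h\|_{0,\Omega}$, and moreover $\utau\cdot\ut=-\partial_n\psi=0$ on $\Gamma$, so $\utau\in\uH^1(\Omega)\cap H_0(\rot,\Omega)$. (One may equally invoke the standard bounded right inverse of $\rot$ onto $L^2(\Omega)$, resp. $L^2_0(\Omega)$, without naming a PDE.)

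Then I would set $\utau_h=\sqcap_h\utau$. Since $\utau\in\uH^1(\Omega)$ is single-valued, its edge averages $\dashint_e\utau\cdot\ut_e\,\mathrm{d}s$ agree across interior edges, so $\sqcap_h\utau\in V_h^{\rm QRT}$; in the second case the boundary averages vanish because $\utau\cdot\ut=0$ on $\Gamma$, whence $\sqcap_h\utau\in V_{h0}^{\rm QRT}$. The commutative diagram of Theorem \ref{thm:commutativity on a cell} gives, cell by cell, $\rot(\sqcap_K\utau)=P_K(\rot\utau)=P_K q_h=q_h|_K$, because $q_h$ is piecewise constant; hence $\rot_h\utau_h=q_h$ and $(\rot_h\utau_h,q_h)=\|q_h\|_{0,\Omega}^2$. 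For the norm, the $L^2$ interpolation estimate \eqref{inequ:L2 norm for local estimation in H(rot)} (or directly the $H^1$-stability Lemma \ref{lemma:H^{1} stable on interpolation associated with QRT}) yields $\|\sqcap_K\utau\|_{0,K}\le C\|\utau\|_{1,K}$, while $\|\rot\sqcap_K\utau\|_{0,K}=\|P_K\rot\utau\|_{0,K}\le\|\rot\utau\|_{0,K}$; summing over $K$ gives $\|\utau_h\|_{\rot,h}\le C\|\utau\|_{1,\Omega}\le C\|q_h\|_{0,\Omega}$. Combining, $(\rot_h\utau_h,q_h)=\|q_h\|_{0,\Omega}^2\ge C^{-1}\|\utau_h\|_{\rot,h}\|q_h\|_{0,\Omega}$, which is the asserted bound.

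The main obstacle I anticipate is securing the two $h$-uniform ingredients simultaneously: the per-cell constants in Lemma \ref{lemma:H^{1} stable on interpolation associated with QRT} and in \eqref{inequ:L2 norm for local estimation in H(rot)} must be controlled uniformly over the regular family $\{\mathcal{J}_h\}$ through the shape-regularity indicator $\mathcal{R}_K$, and the elliptic-regularity constant governing the continuous lifting must be finite, which restricts the admissible domains (e.g. $\Omega$ convex or smooth). Everything else is a direct assembly of the commuting property and the interpolation estimates already established.
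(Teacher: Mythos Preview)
Your proof is correct and follows essentially the same Fortin-type argument as the paper: construct a continuous lifting $\utau\in\uH^1(\Omega)$ with $\rot\utau=q_h$ and $\|\utau\|_{1,\Omega}\le C\|q_h\|_{0,\Omega}$, set $\utau_h=\sqcap_h\utau$, and use the commutative diagram together with the stability of $\sqcap_h$ to conclude. Your version is simply more explicit about how the lifting is obtained (via Dirichlet/Neumann Laplacians) and about why $\sqcap_h\utau$ lands in $V_h^{\rm QRT}$ resp.\ $V_{h0}^{\rm QRT}$, while the paper just asserts the existence of such a $\utau$ and proceeds identically.
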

\begin{proof}
	Given $q_h\in W_h$, there exists a $\utau\in \uH{}^1(\Omega)$, such that $\rot \utau=q_h$, and $\|\utau\|_{1,\Omega}\leqslant C\|q_h\|_{0,\Omega}$. Set $\utau{}_h:=\sqcap_h\utau$, then $\rot_h\utau{}_h=q_h$, and $\|\utau{}_h\|_{\rot,h}\leqslant C\|\utau\|_{1,\Omega}$. This proves \eqref{eq:infsup1}. Similarly is \eqref{eq:infsup2} proved.
\end{proof}

\begin{theorem}
	The commutative diagrams hold as below:
	\begin{equation}
	\begin{array}{ccccccccc}
	\mathbb{R} & ~~~\longrightarrow~~~ & H^2(\Omega) & ~~~\xrightarrow{\bs{\mrm{\nabla}}}~~~ & \uH^1(\Omega) & ~~~\xrightarrow{\mrm{rot}}~~~ & L^2(\Omega)  \\
	%	& ~~~\xrightarrow{\int_\Omega\cdot}~~~ & \mathbb{R}  \\
	& & \downarrow J_h & & \downarrow \sqcap_{h} & & \downarrow P_h & & \\
	\mathbb{R} & \longrightarrow & V^{\rm QBL}_h & \xrightarrow{\bs{\mrm{\nabla_h}}} & V^{\rm QRT}_h & \xrightarrow{\mrm{rot_h}} & W_h,
	%	& \xrightarrow{\int_\Omega\cdot} & \mathbb{R},
	\end{array}
	\label{eq:ddrc1}
	\end{equation}
	and
	\begin{equation}
	\begin{array}{ccccccccc}
	\{0\} & ~~~\longrightarrow~~~ & H^2(\Omega)\cap H^1_0(\Omega) & ~~~\xrightarrow{\bs{\mrm{\nabla}}}~~~ & \uH^1(\Omega)\cap H_0(\rot,\Omega) & ~~~\xrightarrow{\mrm{rot}}~~~ & L_0^2(\Omega)  & ~~~\xrightarrow{\int_\Omega\cdot}~~~ & \{0\}  \\
	& & \downarrow J_h & & \downarrow \sqcap_{h} & & \downarrow P_h & & \\
	\{0\} & \longrightarrow & V^{\rm QBL}_{h0} & \xrightarrow{\bs{\mrm{\nabla_h}}} & V^{\rm QRT}_{h0} & \xrightarrow{\mrm{rot_h}} & W_{h0} & \xrightarrow{\int_\Omega\cdot} & \{0\}.
	\end{array}
	\label{eq:ddrc2}
	\end{equation}
\end{theorem}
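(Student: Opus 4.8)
The plan is to reduce everything to the single-cell result of Theorem \ref{thm:commutativity on a cell} together with the definitions of the global spaces, so that the only genuinely global content is (i) that the three interpolants land in the correct discrete spaces, (ii) that the bottom rows are indeed complexes, i.e. the horizontal maps are well defined into the spaces displayed, and (iii) that the square-wise commutativity of Theorem \ref{thm:commutativity on a cell} glues across cells. For \eqref{eq:ddrc2} there is the additional task of establishing exactness, which the enclosing $\{0\}$'s demand.

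First I would check that the interpolants are conforming. For $u\in H^2(\Omega)\hookrightarrow C^0(\bar\Omega)$ the nodal values $u(A_i)$ are single-valued at every vertex, so $J_hu$ is continuous at the vertices and lies in $V^{\rm QBL}_h$; the zero boundary data for \eqref{eq:ddrc2} are inherited at boundary vertices, giving $J_hu\in V^{\rm QBL}_{h0}$. For $\usigma\in\uH^1(\Omega)$ the degrees of freedom $\dashint_{e}\usigma\cdot\ut{}_e\,\mathrm{d}s$ are single-valued on each edge, so the tangential edge-averages of $\sqcap_h\usigma$ match across interior edges and $\sqcap_h\usigma\in V^{\rm QRT}_h$ (respectively $V^{\rm QRT}_{h0}$ when $\usigma\in H_0(\rot,\Omega)$). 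Finally $P_h$ maps into $W_h$ trivially and preserves the mean, so $P_h:L^2_0(\Omega)\to W_{h0}$.

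Next I would verify the horizontal maps and the commutativity. The containment $\rot_h V^{\rm QRT}_h\subset W_h$ is immediate because $\rot(\utau{}_h|_K)$ is a cell-wise constant by Theorem \ref{thm:commutativity on a cell}. The key point is $\nabla_h V^{\rm QBL}_h\subset V^{\rm QRT}_h$: for $v_h\in V^{\rm QBL}_h$ and an interior edge $e$ with endpoints $P,Q$ and tangent $\ut{}_e$, the fundamental theorem of calculus along $e$ gives $\dashint_e\nabla(v_h|_K)\cdot\ut{}_e\,\mathrm{d}s=(v_h(Q)-v_h(P))/|e|$, which depends only on the vertex values, so vertex continuity of $v_h$ forces the tangential average of $\nabla_hv_h$ to be continuous across $e$. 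Commutativity of both squares then follows cell by cell, $\nabla_hJ_hu|_K=\nabla J_Ku=\sqcap_K\nabla u=\sqcap_h\nabla u|_K$ and $\rot_h\sqcap_h\usigma|_K=\rot\sqcap_K\usigma=P_K\rot\usigma=P_h\rot\usigma|_K$, each equality being Theorem \ref{thm:commutativity on a cell} applied on $K$. The left-end assertions ($\ker\nabla_h=\mathbb{R}$ for \eqref{eq:ddrc1}, $\ker\nabla_h=\{0\}$ for \eqref{eq:ddrc2}) follow because $\nabla_hv_h=0$ makes $v_h$ cell-wise constant, and vertex continuity on the connected $\Omega$ pins down a single constant, which must vanish under the homogeneous boundary condition.

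It remains to prove exactness of the bottom row of \eqref{eq:ddrc2}, and this is the main obstacle. Injectivity of $\nabla_h$ on $V^{\rm QBL}_{h0}$ has just been observed. Surjectivity of $\rot_h:V^{\rm QRT}_{h0}\to W_{h0}$ is exactly the inf-sup estimate \eqref{eq:infsup2} proved just above. For exactness in the middle I would use a dimension count: writing $N^i_h=\#\mathcal{N}_h^i$, $E^i_h=\#\mathcal{E}_h^i$ and $F$ for the number of cells, one has $\dim V^{\rm QBL}_{h0}=N^i_h$, $\dim V^{\rm QRT}_{h0}=E^i_h$ and $\dim W_{h0}=F-1$, while Euler's relation $N^i_h-E^i_h+F=1$ for the simply connected $\Omega$ (using that its boundary contributes equal numbers of boundary vertices and edges) yields the alternating sum $N^i_h-E^i_h+(F-1)=0$. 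Since the row is a complex with $\nabla_h$ injective and $\rot_h$ surjective, rank-nullity then forces $\dim\ker\rot_h=\dim\nabla_hV^{\rm QBL}_{h0}$, and together with the inclusion $\nabla_hV^{\rm QBL}_{h0}\subseteq\ker\rot_h$ this gives equality, completing the exact sequence. The analogous (non-exact) statement for \eqref{eq:ddrc1} requires only steps one through three above.
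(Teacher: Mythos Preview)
Your proof is correct and follows essentially the same route as the paper: cell-wise commutativity from Theorem~\ref{thm:commutativity on a cell}, surjectivity of $\rot_h$ via the inf-sup lemma, and exactness in the middle by an Euler-formula dimension count. The only minor discrepancy is that the paper also establishes exactness of the bottom row of \eqref{eq:ddrc1} (using \eqref{eq:infsup1} and $\dim V^{\rm QRT}_h=\dim V^{\rm QBL}_h+\dim W_h-1$), whereas you treat \eqref{eq:ddrc1} as merely a commutative diagram of complexes; your argument for \eqref{eq:ddrc2} transfers verbatim, so this is not a gap.
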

%\begin{theorem}
%	The discretized de Rham complexes hold below:
%	%\begin{equation}
%	%\begin{array}{ccccccccc}
%	%\begin{eqnarray}
%	\begin{equation}\label{eq:ddrc1}
%	R  \longrightarrow  V^{\rm QBL}_h  \xrightarrow{\nabla_h}  V_{h}^{\rm QRT}  \xrightarrow{\rot_h}  W_h.
%	\end{equation}
%	
%	\begin{equation}\label{eq:ddrc2}
%	\{0\}  \longrightarrow  V^{\rm QBL}_{h0}  \xrightarrow{\nabla_h}  V_{h0}^{\rm QRT}  \xrightarrow{\rot_h}  W_{h0}   \longrightarrow  \{0\}.
%	\end{equation}
%	%\end{eqnarray}
%	%\end{array}
%	%\end{equation}
%\end{theorem}
\begin{proof}
	We first consider \eqref{eq:ddrc1}. The commutativity is trivial by Theorem \ref{thm:commutativity on a cell} and it remains us to verify the discretized de Rham complex by the standard dimension counting technique.
	
	Evidently ${\rm ker}(\nabla_h)=\mathbb{R}$ and $\nabla_h V^{\rm QBL}_h\subset V_{h}^{\rm QRT}$. On the other hand, by \eqref{eq:infsup1}, $\rot_h V_{h}^{\rm QRT}=W_h$. This way, \eqref{eq:ddrc1} follows by noting that $\dim(V^{\rm QBL}_h)=\#(\mathcal{N}_h)$ and $\dim(V_{h}^{\rm QRT})=\#(\mathcal{E}_h)$, and that $\dim(V_{h}^{\rm QRT})=\dim(V^{\rm QBL}_h)+\dim(W_h)-1$ by the Euler formula.
	
	Similarly is \eqref{eq:ddrc2} proved. The proof is completed.
\end{proof}

The error estimation of the global interpolator is the same as that of the respective local ones.
\begin{theorem}
	There exists a constant $C$ depending on the shape regularity of $\mathcal{J}_h$ only, such that
	\begin{enumerate}
		\item $\displaystyle\|u-J_{h}u\|_{0,\Omega}+h|u-J_hu|_{1,h}
		\leqslant
		Ch^2|u|_{2,\Omega}
		\quad\forall u\in H^2(\Omega);$
		\item $\displaystyle	\|\usigma-\sqcap_{h}\usigma\|_{{\rm rot},h}
		\leqslant
		Ch(|\usigma|_{1,\Omega}+|{\rm rot}\usigma|_{1,\Omega})
		\quad\forall \usigma\in H^1({\rm rot},\Omega);$
		\item $	\displaystyle\| q-P_{h}q\|_{0,\Omega}
		\leqslant
		Ch|q|_{1,\Omega}
		\quad\forall q\in H^1(\Omega).$
	\end{enumerate}
	\label{thm:global intepolation error estimation}
\end{theorem}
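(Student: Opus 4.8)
The plan is to deduce all three global estimates from the corresponding local ones proved above, exploiting that each global interpolator acts elementwise ($J_h u|_K = J_K u$, $\sqcap_h \usigma|_K = \sqcap_K \usigma$, $P_h q|_K = P_K q$) and that the broken norms $\|\cdot\|_{0,\Omega}$, $|\cdot|_{1,h}$ and $\|\cdot\|_{\rot,h}$ split as sums of squared local contributions over $K \in \mathcal{J}_h$. On each cell the restriction $u|_K$, $\usigma|_K$ or $q|_K$ inherits the regularity required by the local theorems, so those theorems apply directly.

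Concretely, for (3) I would write $\|q - P_h q\|_{0,\Omega}^2 = \sum_K \|q - P_K q\|_{0,K}^2 \le C \sum_K h_K^2 |q|_{1,K}^2 \le C h^2 |q|_{1,\Omega}^2$, using $h_K \le h$ and $\sum_K |q|_{1,K}^2 = |q|_{1,\Omega}^2$; a square root gives the claim. For (1) the same bookkeeping applied to $\|u - J_K u\|_{0,K} \le C h_K^2 |u|_{2,K}$ and to $|u - J_K u|_{1,K} \le C h_K |u|_{2,K}$ yields $\|u - J_h u\|_{0,\Omega} \le C h^2 |u|_{2,\Omega}$ and $|u - J_h u|_{1,h} \le C h |u|_{2,\Omega}$, which combine into the stated sum. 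For (2) I would expand $\|\usigma - \sqcap_K \usigma\|_{\rot,K}^2 = \|\usigma - \sqcap_K \usigma\|_{0,K}^2 + \|\rot \usigma - \rot \sqcap_K \usigma\|_{0,K}^2$, bound the first term by $C h_K |\usigma|_{1,K}$ and the second by $C h_K |\rot \usigma|_{1,K}$ (the latter via the commutativity $\rot \sqcap_K = P_K \rot$ and the $L^2$ estimate for $P_K$), sum over $K$, and finish with the elementary $(a^2 + b^2)^{1/2} \le a + b$.

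The only genuinely nontrivial point, and the one I expect to be the main obstacle, is certifying that the constant $C$ is truly uniform: it must depend on the shape regularity indicator $\mathcal{R}_K$ alone, and not on the individual local shape parameters $\alpha_K, \beta_K$ or on $h_K$. This forces a second look at the local proofs, most importantly the $H^1$-stability Lemma for $\sqcap_K$, whose argument rests on the eigenvalues of $B_1^T B_1$ and on the Rayleigh-quotient lower bound $\min\{\lambda_2, \lambda_3, \lambda_4\}$ on the orthogonal complement of $\ker B$. I would check that this lower bound stays bounded away from zero and that $\lambda_{\max}(D)$ stays bounded above, uniformly whenever $\mathcal{R}_K$ is uniformly controlled; the same care is needed for the constants in the local $L^2$ estimates for $\sqcap_K$ and $P_K$, which arise from dividing $K$ into two triangles and invoking explicit Poincaré-type constants. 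Once uniformity of the local constants is secured, the replacement $h_K \le h$ and the recombination of cellwise seminorms into global ones complete the three estimates simultaneously.
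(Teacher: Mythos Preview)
Your proposal is correct and matches the paper's approach: the paper does not give an explicit proof of this theorem at all, merely remarking that ``the error estimation of the global interpolator is the same as that of the respective local ones,'' which is exactly the elementwise summation you describe. Your additional discussion of the uniformity of the local constants in $\mathcal{R}_K$ goes beyond what the paper spells out, but is the right thing to worry about and is consistent with the earlier local arguments.
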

\section{Nonconforming finite element spaces and their modulus of continuity} %Consistency error estimations}
\label{sec:NFEs and mc}
In this section, we show that on a grid that consists of arbitrary quadrilaterals and satisfies the condition that the cells are asymptotically parallelograms, the spaces $V^{\rm QBL}_h$ and $V^{\rm QRT}_h$, though not subspaces of  $H^1$ and $H(\rot)$ respectively, the consistency can be controlled well. We begin with an analysis that $V^{\rm QBL}_h$ is in general not continuous.
\subsection{Continuity and non-continuity of $V_{h0}^{\rm QBL}$}
\label{subsec:cnc of QBL}

Let $\mathcal{G}_D$ be a patch with the center $D$ and four cells $K_{1},K_{2},K_{3},K_{4}$, see Figure \ref{fig:patch for QBL FE space}. Let $V_{h0}^{\rm QBL}(\mathcal{G}_D)$ be ${\rm QBL}$ finite element space defined on $\mathcal{G}_D$ with zero boundary condition. Denote by local shape parameters $\alpha_i,\beta_i$ of $K_i$, $i=1:4$. Here $\alpha_i=\beta_i=0$ for $i=2:4$.

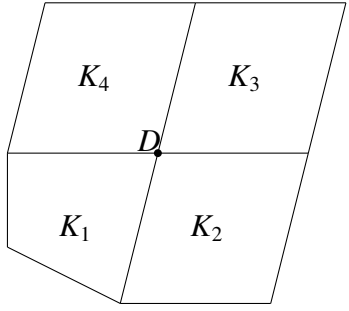
\begin{figure}[htbp]
	\centering
	\begin{tikzpicture}
	\draw(-1.5,-1.25)--(0,-2);
	\draw(0,-2)--(2,-2);
	\draw(-1.5,0)--(0.5,0);
	\draw(0.5, 0)--(2.5, 0);
	\draw(-1, 2)--(1,2);
	\draw(1,2)--(3, 2);
	
	\draw(-1.5,-1.25)--(-1.5,-0);
	\draw(-1.5,-0)--(-1, 2);
	\draw(0,-2)--(0.5,0);
	\draw(0.5, 0)--(1, 2);
	\draw(2,-2)--(2.5,-0);
	\draw(2.5, -0)--(3, 2);

	\draw(0.38,0.17)node{$D$};
	\draw(-0.6,-1)node{$K_{1}$};
	\draw(1.15,-1)node{$K_{2}$};
	\draw(-0.35,1)node{$K_{4}$};
	\draw(1.65,1)node{$K_{3}$};
	%		\draw(-0.85,-1.8)node{$e$};
	\fill(0.5,0) circle(1.5pt);
	\end{tikzpicture}
	\caption{Illustration of a patch $\mathcal{G}_D$}
	\label{fig:patch for QBL FE space}
\end{figure}

\begin{theorem}
	For $\alpha_1,\beta_1\neq 0$ and $v_h\neq 0\in V_{h0}^{\rm QBL}(\mathcal{G}_D)$, there exists a function $\varphi\in C^{\infty}(\mathcal{G}_D)$ such that
	\begin{equation*}
	(\nabla\varphi,\nabla_{h} v_h)+(\Delta \varphi,v_h)\neq 0
	\end{equation*}
\end{theorem}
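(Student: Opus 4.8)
The plan is to read $(\nabla\varphi,\nabla_{h}v_h)+(\Delta\varphi,v_h)$ as the consistency (nonconformity) functional of $v_h$ and to exhibit one smooth $\varphi$ on which it is nonzero. First I would integrate by parts cell by cell. Since $\varphi$ is smooth and $v_h|_{K}\in P_{K}^{\rm QBL}$ is smooth on each cell, Green's formula on $K$ gives $\int_{K}\nabla\varphi\cdot\nabla v_h\,\mathrm{d}x=-\int_{K}\Delta\varphi\,v_h\,\mathrm{d}x+\int_{\partial K}\partial_{n_K}\varphi\,v_h\,\mathrm{d}s$. Summing over $K_1,\dots,K_4$ and using that $\varphi$ is single-valued (so the two one-sided normal derivatives on a shared edge coincide up to the sign coming from $n_{K}=-n_{K'}$), the volume terms recombine into $-(\Delta\varphi,v_h)$, leaving
\begin{equation*}
(\nabla\varphi,\nabla_{h}v_h)+(\Delta\varphi,v_h)=\sum_{e\in\mathcal{E}^{i}}\int_{e}\partial_{n_e}\varphi\,[\![v_h]\!]_{e}\,\mathrm{d}s+\sum_{e\in\mathcal{E}^{b}}\int_{e}\partial_{n}\varphi\,v_h\,\mathrm{d}s,
\end{equation*}
where $\mathcal{E}^{i}$ and $\mathcal{E}^{b}$ denote the interior and boundary edges of the patch $\mathcal{G}_D$. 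Thus the functional measures exactly the tangential mismatch of $v_h$ across the edges.

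Next I would localise. Let $e^{\ast}$ be one of the two interior edges of $K_1$ emanating from $D$; it is shared with one of the parallelogram cells, say $K_2$. Taking $\varphi\in C_{c}^{\infty}$ supported in a small ball about the relative midpoint of $e^{\ast}$ that meets only $K_1$, $K_2$ and the relative interior of $e^{\ast}$ kills every boundary integral and every interior-edge integral except the one on $e^{\ast}$, so that
\begin{equation*}
(\nabla\varphi,\nabla_{h}v_h)+(\Delta\varphi,v_h)=\int_{e^{\ast}}\partial_{n_{e^{\ast}}}\varphi\,[\![v_h]\!]_{e^{\ast}}\,\mathrm{d}s.
\end{equation*}
It then remains to show that $[\![v_h]\!]_{e^{\ast}}\not\equiv0$ on $e^{\ast}$ and to pick $\varphi$ making this one-dimensional integral nonzero.

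The crux, and the step I expect to be the main obstacle, is proving $[\![v_h]\!]_{e^{\ast}}\not\equiv0$. Since $D$ is the only interior vertex of $\mathcal{G}_D$, all remaining vertices carry value $0$ under the zero boundary condition, whence $v_h|_{K_1}=v_h(D)\,\phi_D$ with $v_h(D)\neq0$, where $\phi_D$ is the QBL nodal basis function of $K_1$ at $D$. Parametrising $e^{\ast}$ linearly by $t$, both $\xi$ and $\eta$ are affine in $t$, so $\phi_D|_{e^{\ast}}$ is a quadratic polynomial whose leading coefficient is the $\xi\eta$-coefficient of $\phi_D$ times the product of the $t$-slopes of $\xi$ and $\eta$; a direct parametrisation shows this slope product is a nonzero multiple of $\alpha_1$ or of $\beta_1$ (according to which edge at $D$ is taken), while the $\xi\eta$-coefficient is nonzero on the admissible range $|\alpha_1|+|\beta_1|<1$ by \eqref{equ:local dual basis of QBL}. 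Hence $v_h|_{K_1}$ is genuinely quadratic along $e^{\ast}$ once $\alpha_1,\beta_1\neq0$. On the other side, $K_2$ is a parallelogram, so one of $\xi,\eta$ is constant along $e^{\ast}$, $\xi\eta$ degenerates to an affine function there, and $v_h|_{K_2}$ is affine on $e^{\ast}$. Consequently $[\![v_h]\!]_{e^{\ast}}$ is a nonzero quadratic minus an affine function, a nontrivial polynomial, and therefore has a fixed sign on some open subinterval $I\subset e^{\ast}$.

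Finally, in local tangential–normal coordinates $(\tau,\nu)$ along $e^{\ast}$ I would set $\varphi(\tau,\nu)=\nu\,\chi(\tau)\,\rho(\nu)$ with $\chi\in C_{c}^{\infty}(I)$ and $\rho$ a smooth cutoff satisfying $\rho(0)=1$ and supported near $\nu=0$. This $\varphi$ is smooth, compactly supported in the ball above, and satisfies $\partial_{n_{e^{\ast}}}\varphi|_{e^{\ast}}=\chi$. Choosing the sign of $\chi$ to agree with that of $[\![v_h]\!]_{e^{\ast}}$ on $I$ gives $\int_{e^{\ast}}\partial_{n_{e^{\ast}}}\varphi\,[\![v_h]\!]_{e^{\ast}}\,\mathrm{d}s=\int_{I}\chi\,[\![v_h]\!]_{e^{\ast}}\,\mathrm{d}s\neq0$, which is the desired conclusion. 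The only delicate points are verifying that the $\xi\eta$-coefficient of $\phi_D$ does not vanish for admissible parameters and that the chosen edge at $D$ carries a nonzero $\alpha_1$- or $\beta_1$-slope; both follow from the explicit formulas of Section~\ref{subsec:def of FEs}.
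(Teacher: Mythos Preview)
Your argument is correct and the key insight is the same as the paper's: on the parallelogram cells $K_2,K_3,K_4$ the QBL restriction to any edge is affine, so all nonconformity is concentrated on $\partial K_1$, and the $\xi\eta$-coefficient of the nodal function $\phi_D$ never vanishes for admissible $(\alpha_1,\beta_1)$. Your edge-slope computation (one interior edge at $D$ carries an $\alpha_1$ factor, the other a $\beta_1$ factor) is exactly right.

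Where you diverge from the paper is in the choice of $\varphi$. The paper does not localise with a bump function; instead it takes the globally defined \emph{linear} polynomial $\varphi$ with constant gradient $\nabla\varphi=\undertilde{r}{}_{K_1}/(\undertilde{r}{}_{K_1}\times\undertilde{s}{}_{K_1})$, so that $\Delta\varphi=0$ and $\partial_n\varphi$ is constant on each edge. Using the reduction
\[
(\nabla\varphi,\nabla_h v_h)+(\Delta\varphi,v_h)=\int_{\partial K_1}\partial_n\varphi\,(v_h-q)\,\mathrm{d}s,
\]
with $q$ the endpoint linear interpolant, the paper then verifies nonvanishing by a direct calculation over all four edges of $K_1$ simultaneously. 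Your route trades that explicit computation for a soft argument: once the jump on $e^\ast$ is a nontrivial quadratic, any bump detecting its sign suffices. The paper's choice is shorter to write down and stays within polynomials (nice for readers who want an explicit witness $\varphi$), while yours is more robust---it does not rely on a fortunate algebraic cancellation across the four edges and would generalise immediately to other shape-function spaces where the jump is known to be nontrivial.
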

\begin{proof}
	Since $\alpha_i,\beta_i=0$ for $i=2:4$, then
	\begin{equation*}
	(\nabla \varphi,\nabla_h v_h)+(\Delta \varphi,v_h)=
	\int_{\partial K_1}\frac{\partial \varphi}{\partial n}(v_h-q)\ \mathrm{d}s.
	\end{equation*}
	Here $q$ is the linear interpolation of $v_h$ on $e$ with respect to  endpoints. Without loss of generality, we assume $\alpha_1\neq 0$. Noticing that $\dim V_{h0}^{\rm QBL}(\mathcal{G}_D)=1$, we denote by $\phi_{D}$ the  basis of $V_{h0}^{\rm QBL}(\mathcal{G}_D)$ and $\phi_{D}|_{K_1}=\phi_{1}$ (see \eqref{equ:local dual basis of QBL}), then $v_h|_{K_1}=v_h(D)\phi_{1}$.
	Let $\varphi$ be a linear polynomial whose gradient is $\ur{}_{K_1}/\ur{}_{K_1}\times\us{}_{K_1}$, defined on the patch $\mathcal{G}_D$. Then the proof is completed by simple calculation.
\end{proof}

\subsection{Modulus of continuity of $V_{h0}^{\rm QBL}$}
\label{subsec:mc of QBL}

Define consistency functional $E(\uzeta,v_{h})$ by
\begin{align}
E(\uzeta, v_{h})&=(\uzeta,\nabla_{h}v_{h})+(\dv\uzeta,v_{h})
\quad
\mbox{for}\
\uzeta\in \uH^{1}(\Omega),
v_{h}\in V_{h0}^{\rm QBL} \\
E(\uzeta, v_{h})&=(\uzeta,\nabla_{h}v_{h})+(\dv\uzeta,v_{h})
\quad
\mbox{for}\
\uzeta\in \uH{}_{0}^{1}(\Omega),
v_{h}\in V_{h}^{\rm QBL} \cap L^2_0(\Omega).
\end{align}

Let $\{\phi_{i}\}_{i=1:4}$ be local basis of $P_{K}^{\rm QBL}$, then define by
\begin{equation*}
\iota_{1}=\sum_{i=1}^{4}\phi_{i}^{(1)}v_{h}(A_{i})\quad
\iota_{2}=\sum_{i=1}^{4}\phi_{i}^{(2)}v_{h}(A_{i})\quad
\iota_{3}=\sum_{i=1}^{4}\phi_{i}^{(3)}v_{h}(A_{i}).
\end{equation*}

\begin{theorem}
	For
	$\uzeta\in \uH^{1}(\Omega)$,
	$v_{h}\in V_{h0}^{\rm QBL}$ or
	$\uzeta\in \uH{}_{0}^{1}(\Omega)$,
	$v_{h}\in V_{h}^{\rm QBL}$,
	it holds
	\begin{equation*}
	E(\uzeta,v_{h})
	\leqslant
	Ch\|\uzeta\|_{1,\Omega}|v_{h}|_{1,h}.
	\end{equation*}
	\label{thm:global estimate of E(u,v_h)}
\end{theorem}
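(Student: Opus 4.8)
The plan is to reduce $E(\uzeta,v_h)$ to a sum of edge integrals by elementwise integration by parts, and then to exhibit an explicit factor $\mathcal{O}(h)$ coming from the asymptotically parallelogram assumption. First I would integrate by parts on each cell, using $(\uzeta,\nabla v_h)_K+(\dv\uzeta,v_h)_K=\int_{\partial K}(\uzeta\cdot\un{}_K)v_h\dS$, so that $E(\uzeta,v_h)=\sum_{K}\int_{\partial K}(\uzeta\cdot\un{}_K)v_h\dS$. Since $\uzeta\in\uH^1(\Omega)$ has single-valued normal trace across interior edges, the contributions regroup into $\sum_{e\in\mathcal{E}_h^i}\int_e(\uzeta\cdot\un{}_e)\llt v_h\rrt_e\dS$ plus boundary terms; in the case $\uzeta\in\uH{}_0^1(\Omega)$ the boundary terms vanish because $\uzeta\cdot\un=0$ on $\Gamma$, while in the case $v_h\in V_{h0}^{\rm QBL}$ they are the single-cell integrals $\int_e(\uzeta\cdot\un)v_h\dS$ over boundary edges.

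The heart of the argument is a local claim: on each edge $e$ the relevant trace --- the jump $\llt v_h\rrt_e$ on interior edges, or $v_h|_e$ itself on boundary edges --- is a quadratic edge bubble $c_e\,b_e$ that vanishes at the two endpoints of $e$, with coefficient controlled by $|c_e|\leqslant Ch|v_h|_{1,\omega_e}$, where $\omega_e$ denotes the union of the cells meeting $e$. To see this I would write $v_h|_K=\iota_1\xi\eta+\iota_2\xi+\iota_3\eta+\iota_4$ in terms of the coefficients $\iota_1,\iota_2,\iota_3$ introduced above, and restrict it to an edge. Using the vertex coordinates in Figure \ref{fig:convquad} one checks that $\xi$ and $\eta$ are affine along each $e_i$ and that the genuinely quadratic part of $\xi\eta|_{e_i}$ has coefficient proportional to $\alpha_K$ or $\beta_K$; hence $v_h|_K|_{e}$ splits into an affine (vertex-interpolating) part and a remaining bubble of amplitude $\mathcal{O}(\alpha_K+\beta_K)\,\iota_1$. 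The affine parts cancel in $\llt v_h\rrt_e$ by the continuity of $v_h$ at the vertices, and they vanish on boundary edges because $v_h\in V_{h0}^{\rm QBL}$ is zero at both boundary endpoints; what survives is exactly the bubble. Invoking the asymptotically parallelogram assumption, $\alpha_K,\beta_K=\mathcal{O}(h)$, together with the local norm equivalence $|\iota_1|\leqslant C|v_h|_{1,K}$ (which follows from the positive definiteness of the Gram matrix of $\nabla\xi,\nabla\eta,\nabla(\xi\eta)$ under shape regularity, since $\iota_1$ annihilates affine functions), then yields $|c_e|\leqslant Ch|v_h|_{1,\omega_e}$.

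With the local claim in hand the estimate is routine. On each edge I would bound $\bigl|\int_e(\uzeta\cdot\un{}_e)c_e b_e\dS\bigr|\leqslant\|\uzeta\cdot\un{}_e\|_{0,e}\,|c_e|\,\|b_e\|_{0,e}$, use the scaled trace inequality $\|\uzeta\cdot\un{}_e\|_{0,e}\leqslant Ch^{-1/2}\|\uzeta\|_{1,\omega_e}$ and $\|b_e\|_{0,e}\leqslant Ch^{1/2}$, so that each edge contributes $Ch\|\uzeta\|_{1,\omega_e}|v_h|_{1,\omega_e}$. Summing over all edges and applying the Cauchy--Schwarz inequality with the finite-overlap property of the patches $\omega_e$ gives $E(\uzeta,v_h)\leqslant Ch\|\uzeta\|_{1,\Omega}|v_h|_{1,h}$, as claimed; the same chain works verbatim in both cases once the boundary terms are disposed of as above.

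I expect the main obstacle to be the local claim, specifically the bookkeeping needed to isolate the bubble coefficient $c_e$ and to certify that it is $\mathcal{O}(h)$ rather than $\mathcal{O}(1)$. This is precisely the place where the failure of the patch test would otherwise obstruct consistency: without the asymptotically parallelogram hypothesis the quadratic part of $\xi\eta$ along an edge is only $\mathcal{O}(1)$, and $c_e$ would not be small. Making the dependence on $\alpha_K,\beta_K$ explicit --- for which the evaluations collected in Tables \ref{tab:tab1}--\ref{tab:tab2} and the closed forms of $\iota_1,\iota_2,\iota_3$ are the natural tools --- and simultaneously controlling $\iota_1$ by $|v_h|_{1,K}$ is the delicate step; the integration-by-parts reduction and the final summation are comparatively standard.
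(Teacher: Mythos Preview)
Your proposal is correct and follows essentially the same route as the paper. The paper also subtracts the affine vertex interpolant $q$ on each edge, writes the residue cell-by-cell as $\sum_{e\subset\partial K}\int_e(\uzeta\cdot\un)(v_h-q)\,\mathrm{d}s$, computes explicitly that on each $e_i$ the surviving factor is $\alpha_K\iota_1$ or $\beta_K\iota_1$ times a bounded edge function, and then bounds $\iota_1^2\leqslant\lambda_{F,G}|v_h|_{1,K}^2$ via an explicit $3\times3$ Gram matrix $G$ and the generalized Rayleigh quotient---exactly your ``positive definiteness of the Gram matrix of $\nabla\xi,\nabla\eta,\nabla(\xi\eta)$'' argument made concrete. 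The only cosmetic difference is that the paper sums cell-by-cell rather than edge-by-edge with patches $\omega_e$, and writes out the edge bubbles in the $\xi$-coordinate rather than as an abstract $c_e b_e$.
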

\begin{proof}
	Evidently the consistency functional can be decomposed into
	\begin{equation*}
	E(\uzeta, v_{h})=
	\sum_{e\in\mathcal{E}_{h}}
	\int_{e}\uzeta\cdot\un{}_{e}[\![v_{h}-q]\!]_{e}\ \mathrm{d}s=
	\sum_{K\in\mathcal{J}_{h}}\sum_{e\subset\partial K}
	\int_{e}\uzeta\cdot\un{}_{e}(v_{h}-q)\ \mathrm{d}s.
	\end{equation*}
	Here $q$ is the linear interpolation of $v_h$ on $e$ with respect to  endpoints. Then by direct calculation, we have
	\begin{align*}
	\int_{\partial K}\uzeta\cdot\un(v_{h}-q)\ \mathrm{d}s
	&=
	\beta_K\iota_{1}\int_{e_{1}}\uzeta\cdot\un{}_{1}
	(\frac{\xi^2}{1+\alpha_K}-(1+\alpha_K))\ \mathrm{d}s+
	\alpha_K\iota_{1}\int_{e_{2}}\uzeta\cdot\un{}_{2}
	(\frac{\eta^2}{-1+\beta_K}-(-1+\beta_K))\ \mathrm{d}s \\
	&+
	\beta_K\iota_{1}\int_{e_{3}}\uzeta\cdot\un{}_{3}
	(\frac{\xi^2}{-1+\alpha_K}-(-1+\alpha_K))\ \mathrm{d}s+
	\alpha_K\iota_{1}\int_{e_{4}}\uzeta\cdot\un{}_{4}
	(\frac{\eta^2}{1+\beta_K}-(1+\beta_K))\ \mathrm{d}s \\
	&\leqslant
	Ch\iota_{1}\|\uzeta\|_{1,K}
	%\leqslant C\max\{\alpha_K,\beta_K\}\iota_{1}\|\uzeta\|_{1,K}
	\label{equ:consistency functional}.
	\end{align*}	
	On the other hand, setting
	$
	\undertilde{\iota}=
	(\iota_{1},\iota_{2},\iota_{3})^{T}
	$ and
	$|v_{h}|^{2}_{1, K}=\undertilde{\iota}^{T}G\undertilde{\iota}$,
	then
	\begin{equation*}
	G=
	\begin{bmatrix}
	\frac
	{4(1+\beta_K^{2}) \undertilde{s}{}_K\cdot\undertilde{s}{}_{K}
		-8\alpha_K\beta_K   \undertilde{r}{}_{K}\cdot\undertilde{s}{}_{K}
		+4(1+\alpha_K^{2})\undertilde{r}{}_{K}\cdot\undertilde{r}{}_{K}}
	{3              \undertilde{r}{}_{K}\times\undertilde{s}{}_{K}} &
	\frac
	{4\alpha_K \undertilde{s}{}_{K}\cdot\undertilde{s}{}_{K}
		-4\beta_K  \undertilde{r}{}_{K}\cdot\undertilde{s}{}_{K}}
	{3       \undertilde{r}{}_{K}\times\undertilde{s}{}_{K}} &
	\frac
	{-4\alpha_K \undertilde{r}{}_{K}\cdot\undertilde{s}{}_{K}
		+4\beta_K  \undertilde{r}{}_{K}\cdot\undertilde{r}{}_{K}}
	{3        \undertilde{r}{}_{K}\times\undertilde{s}{}_{K}}  \\
	\frac
	{4\alpha_K \undertilde{s}{}_{K}\cdot\undertilde{s}{}_{K}
		-4\beta_K  \undertilde{r}{}_{K}\cdot\undertilde{s}{}_{K}}
	{3       \undertilde{r}{}_{K}\times\undertilde{s}{}_{K}} &
	\frac
	{4\undertilde{s}{}_{K}\cdot\undertilde{s}{}_{K}}
	{ \undertilde{r}{}_{K}\times\undertilde{s}{}_{K}} &
	\frac
	{-4\undertilde{r}{}_{K}\cdot\undertilde{s}{}_{K}}
	{  \undertilde{r}{}_{K}\times\undertilde{s}{}_{K}} \\
	\frac
	{-4\alpha_K \undertilde{r}{}_{K}\cdot\undertilde{s}{}_{K}
		+4\beta_K  \undertilde{r}{}_{K}\cdot\undertilde{r}{}_{K}}
	{3        \undertilde{r}{}_{K}\times\undertilde{s}{}_{K}} &
	\frac
	{-4\undertilde{r}{}_{K}\cdot\undertilde{s}{}_{K}}
	{  \undertilde{r}{}_{K}\times\undertilde{s}{}_{K}} &
	\frac
	{4 \undertilde{r}{}_{K}\cdot\undertilde{r}{}_{K}}
	{  \undertilde{r}{}_{K}\times\undertilde{s}{}_{K}} \\
	\end{bmatrix}.
	\end{equation*}
	By the generalized Rayleigh quotient theorem, then for
	$
	F=
	\begin{bmatrix}
	1 & 0 & 0 \\
	0 & 0 & 0 \\
	0 & 0 & 0 \\
	\end{bmatrix}
	$ and all $\undertilde{\iota}\in R^3$
	\begin{equation*}
	\undertilde{\iota}^{T}F\undertilde{\iota}
	\leqslant
	\lambda_{F,G}\undertilde{\iota}^{T}G\undertilde{\iota},
	\quad
	\lambda_{F,G}=
	\frac
	{9\ur{}_K\times\us{}_K}
	{4((3-\alpha_K^2+3\beta_K^2)\us{}_{k}\cdot\us{}_{k}
		-4\alpha_K\beta_K\ur{}_{K}\cdot\us{}_{K}
		+(3+3\alpha_K^2-\beta_K^2)\ur{}_{K}\cdot\ur{}_{K})}.
	\end{equation*}
	
	In summary, we have
	\begin{equation*}
	E(\uzeta,v_h)\leqslant
	Ch\sum_{K\in\mathcal{J}{}_{h}}\|\uzeta\|_{1,K}|v_h|_{1,K}
	\leqslant Ch\|\uzeta\|_{1,\Omega}|v_h|_{1,h}.
	\end{equation*}
	The proof is completed.
\end{proof}

\subsection{Modulus of continuity of $V_{h0}^{\rm QRT}$}
\label{subsec:mc of QRT}

Define consistency functional
$E(w,\undertilde{\tau}{}_{h})$ by
\begin{align}
E(w,\undertilde{\tau}{}_{h})&=
(w,\mathrm{rot}_{h}\undertilde{\tau}{}_{h})-
(\undertilde{\mathrm{curl}}\ w,\undertilde{\tau}{}_{h})
\quad
\mbox{for}\
w\in H^{1}(\Omega),
\forall
\undertilde{\tau}{}_{h}\in V_{h0}^{\rm QRT} \\
E(w,\undertilde{\tau}{}_{h})&=
(w,\mathrm{rot}_{h}\undertilde{\tau}{}_{h})-
(\undertilde{\mathrm{curl}}\ w,\undertilde{\tau}{}_{h})
\quad
\mbox{for}\
w\in H^{1}_0(\Omega),
\forall
\undertilde{\tau}{}_{h}\in V_{h}^{\rm QRT}.
\end{align}
Evidently the consistency functional can be decomposed into
\begin{equation}
E(w,\undertilde{\tau}{}_{h})=
\sum_{K\in\mathcal{J}{}_{h}}\sum_{e\subset\partial K}
\int_{e}
(w-c_K)
(\undertilde{\tau}{}_{h}
\cdot
\ut{}_{e}-
P_{e}
(\utau{}_{h}
\cdot
\ut{}_{e}))
\ \mathrm{d}s.
\label{equ:consistency functional decomposition}
\end{equation}
Here $c_K$ is an arbitrary constant.

\begin{theorem}
	For
	$w\in H^{1}(\Omega)$,
	$\utau{}_{h}
	\in V_{h0}^{\rm QRT}$ or
	$w\in H_0^{1}(\Omega)$,
	$\utau{}_{h}
	\in V_{h}^{\rm QRT}$,
	it holds
	\begin{equation}
	E(w,\utau{}_{h})\leqslant Ch
	|w|_{1,\Omega}\|\utau{}_{h}\|_{{\rm rot},h}.
	\label{inequ: rot consistency error}
	\end{equation}
	\label{thm: rot consistency error}
\end{theorem}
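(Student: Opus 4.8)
The plan is to estimate the contribution of each edge in the decomposition \eqref{equ:consistency functional decomposition} and then sum over the grid, in the spirit of the argument for Theorem \ref{thm:global estimate of E(u,v_h)}. The structural fact I intend to exploit is that, although $V_h^{\rm QRT}\not\subset H(\rot,\Omega)$, the tangential jump across each interior edge carries a full factor of the local shape parameters $\alpha_K,\beta_K$, which are $\mathcal{O}(h)$ on an asymptotically parallelogram grid. Since the factor $\utau{}_h\cdot\ut{}_e-P_e(\utau{}_h\cdot\ut{}_e)$ has zero mean on $e$, the constant $c_K$ in \eqref{equ:consistency functional decomposition} may be chosen freely; I will take $c_K=P_Kw$ and, via the trace inequality combined with Poincar\'e's inequality, record $\|w-P_Kw\|_{0,e}\leqslant Ch_K^{1/2}|w|_{1,K}$.

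First I would pin down the local structure of the tangential component. Writing $\utau{}_h|_K=d_1\nabla\xi+d_2\nabla\eta+d_3\xi\nabla\eta+d_4\eta\nabla\xi$ and using the vertex coordinates $A_i$ in the $\ur O\us$ frame, a direct parametrization of each edge shows that $\utau{}_h\cdot\ut{}_e$ is an affine function of the edge length parameter, whose deviation from its mean is proportional to $\alpha_K$ (on the edges $e_2,e_4$) or $\beta_K$ (on $e_1,e_3$) times the combination $(d_3+d_4)$. In particular this deviation vanishes identically when $\alpha_K=\beta_K=0$, so the element is $H(\rot)$-conforming on parallelograms; this plays the role that the $\xi^2$-type deviations played in driving the $H^1$ inconsistency in Theorem \ref{thm:global estimate of E(u,v_h)}.

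Next I would convert this into a quantitative edgewise bound. Under the shape-regularity assumption the basis $\{\nabla\xi,\nabla\eta,\xi\nabla\eta,\eta\nabla\xi\}$ is uniformly well conditioned, so its Gram matrix has $\mathcal{O}(1)$ spectrum and the coefficients satisfy $|d_i|\leqslant C\|\utau{}_h\|_{0,K}$; combining this with the scalings $|\nabla\xi|,|\nabla\eta|\sim h_K^{-1}$ and $|e|\sim h_K$ gives
\begin{equation*}
\|\utau{}_h\cdot\ut{}_e-P_e(\utau{}_h\cdot\ut{}_e)\|_{0,e}\leqslant C\,h_K^{-1/2}\max\{\alpha_K,\beta_K\}\,\|\utau{}_h\|_{0,K}.
\end{equation*}
A Cauchy--Schwarz inequality on each edge, together with the trace--Poincar\'e estimate above, then yields
\begin{equation*}
\Big|\int_e(w-c_K)(\utau{}_h\cdot\ut{}_e-P_e(\utau{}_h\cdot\ut{}_e))\dS\Big|\leqslant C\max\{\alpha_K,\beta_K\}\,|w|_{1,K}\,\|\utau{}_h\|_{0,K}.
\end{equation*}

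Finally I would invoke the asymptotic-parallelogram property $\max_K\{\alpha_K,\beta_K\}=\mathcal{O}(h)$ from Lemma \ref{lemma:mesh refinement} to replace the shape parameters by $Ch$, and sum over all cells with a discrete Cauchy--Schwarz inequality, using $\|\utau{}_h\|_{0,K}\leqslant\|\utau{}_h\|_{{\rm rot},K}$, to conclude
\begin{equation*}
E(w,\utau{}_h)\leqslant Ch\sum_{K\in\mathcal{J}_h}|w|_{1,K}\,\|\utau{}_h\|_{0,K}\leqslant Ch\,|w|_{1,\Omega}\,\|\utau{}_h\|_{{\rm rot},h}.
\end{equation*}
The two cases in the statement are handled identically, since the per-edge estimate is insensitive to whether the homogeneous condition is imposed on $w$ or on the tangential averages of $\utau{}_h$. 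The main obstacle is the edge computation of the second step: one must check that the deviation carries a genuine factor of $\alpha_K$ or $\beta_K$ and is controlled by $\|\utau{}_h\|_{0,K}$ \emph{alone} rather than by $|\utau{}_h|_{1,K}$, because it is precisely the pairing of the $\mathcal{O}(h)$ smallness of the shape parameters with the zeroth-order norm of $\utau{}_h$ that delivers the optimal $\mathcal{O}(h)$ rate without an unwanted loss of a power of $h$.
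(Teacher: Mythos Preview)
Your plan is correct and follows essentially the same route as the paper: the edge decomposition \eqref{equ:consistency functional decomposition}, the trace/Poincar\'e bound on $w-c_K$, the explicit observation that on each edge the deviation $\utau{}_h\cdot\ut{}_e-P_e(\utau{}_h\cdot\ut{}_e)$ is proportional to $\alpha_K$ or $\beta_K$ times $(d_3+d_4)$, and finally the asymptotic-parallelogram bound $\max\{\alpha_K,\beta_K\}=\mathcal{O}(h)$. The only noteworthy difference is in the last technical step: the paper writes $\utau{}_h$ in the centered basis $\{\nabla\xi,\nabla\eta,\hat\xi\nabla\eta,\hat\eta\nabla\xi\}$, computes the matrix $V$ of $\|\cdot\|_{{\rm rot},K}$ and the rank-one matrix $U$ representing $(\gamma_3+\gamma_4)^2$, and invokes the generalized Rayleigh quotient $\lambda_{U,V}$ (given explicitly) to conclude; you instead appeal to the uniform conditioning of the Gram matrix to get $|d_3+d_4|\leqslant C\|\utau{}_h\|_{0,K}$ directly. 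Your argument is slightly more elementary and in fact yields control by $\|\utau{}_h\|_{0,K}$ rather than the full $\|\utau{}_h\|_{{\rm rot},K}$, while the paper's explicit matrix computation makes the constants transparent; substantively the two are the same proof.
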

\begin{proof}
	Evidently, we have
	$\inf_{c_{K}\in \mathbb{R}}\|w-c_K\|_{0,\partial K}^{2}\leqslant Ch_{K}|w|_{1,K}^{2}$.
	Let
	$
	\utau{}_{h}=
	\gamma_{1}\nabla\xi+
	\gamma_{2}\nabla\eta+
	\gamma_{3}\hat{\xi}\nabla\eta+
	\gamma_{4}\hat{\eta}\nabla\xi
	$ and $\|\utau{}_{h}\|_{{\rm rot},K}^2=\ugamma^T V\ugamma$, with  $\ugamma=(\gamma_{1},\gamma_{2},\gamma_{3},\gamma_{4})^T$ and	
	\begin{equation}
	V=
	\begin{bmatrix}
	\frac
	{4 \us{}_K\cdot\us{}_K}
	{  \ur{}_K\times\us{}_K} &
	\frac
	{-4\ur{}_K\cdot\us{}_K}
	{  \ur{}_K\times\us{}_K} &
	0 &
	0                        \\
	\frac
	{-4\ur{}_K\cdot\us{}_K}
	{  \ur{}_K\times\us{}_K} &
	\frac
	{4 \ur{}_K\cdot\ur{}_K}
	{  \ur{}_K\times\us{}_K} &
	0 &
	0                         \\
	0 &
	0 &
	\frac
	{4(3+3\alpha_K^2-\beta_K^2)\ur{}_K\cdot\ur{}_{K}+36}
	{9                   \ur{}_K\times\us{}_K} &
	\frac
	{-8\alpha_K\beta_K \ur{}_K\cdot\us{}_K-36}
	{9             \ur{}_K\times\us{}_K}   \\
	0 &
	0 &
	\frac
	{-8\alpha_K\beta_K \ur{}_K\cdot\us{}_K-36}
	{9             \ur{}_K\times\us{}_K} &
	\frac
	{4(3+3\beta_K^2-\alpha_K^2)\us{}_K\cdot\us{}_K+36}
	{9                   \ur{}_K\times\us{}_K}
	\end{bmatrix}.
	\label{matrix:V}
	\end{equation}
	
	On the other hand
	\begin{equation*}
	\|
	\utau{}_{h}
	\cdot
	\ut{}_{e}-
	P_{e}
	(\utau{}_{h}
	\cdot
	\ut{}_{e})
	\|_{0,e}^{2}
	=
	\int_{e}
	(\utau{}_{h}
	\cdot
	\ut{}_{e}-
	P_{e}
	(\utau{}_{h}
	\cdot
	\ut{}_{e}))^2
	\ \mathrm{d}s
	=
	\int_{e}
	\utau{}_{h}
	\cdot
	\ut{}_{e}
	(\utau{}_{h}
	\cdot
	\ut{}_{e}-
	P_{e}
	(\utau{}_{h}
	\cdot
	\ut{}_{e}))
	\ \mathrm{d}s,
	\end{equation*}
	then by simple calculation
	\begin{align*}
	&\int_{e_{1}}
	\undertilde{\tau}{}_{h}
	\cdot
	\undertilde{t}{}_{1}
	(\undertilde{\tau}{}_{h}
	\cdot
	\undertilde{t}{}_{1}-
	P_{e_{1}}
	(\undertilde{\tau}{}_{h}
	\cdot
	\undertilde{t}{}_{1}))
	\ \mathrm{d}s
	\\
	=
	&\int_{e_{1}}
	(
	\gamma_{3}\nabla\eta\cdot
	\undertilde{t}{}_{1}\xi+
	\gamma_{4}\nabla\xi\cdot
	\undertilde{t}{}_{1}\eta
	)
	\cdot
	(
	\gamma_{3}\nabla\eta\cdot
	\undertilde{t}{}_{1}
	(\xi-P_{e_{1}}(\xi))+
	\gamma_{4}\nabla\xi \cdot
	\undertilde{t}{}_{1}
	(\eta-P_{e_{1}}(\eta))
	)
	\ \mathrm{d}s \\
	=
	&\int_{e_{1}}
	(\gamma_{3}\nabla\eta\cdot
	\undertilde{t}{}_{1})^{2}\xi^{2}+
	2(\gamma_{3}\nabla\eta\cdot
	\undertilde{t}{}_{1})
	(\gamma_{4}\nabla\xi\cdot
	\undertilde{t}{}_{1})\xi\eta+
	(\gamma_{4}\nabla\xi\cdot
	\undertilde{t}{}_{1})^{2}\eta(\eta-1)
	\ \mathrm{d}s =
	\frac{4(1+\alpha_K)^{2}\beta_K^{2}}{3|e_{1}|}
	\undertilde{\gamma}^{T}U\undertilde{\gamma}.
	%	\leqslant
	%	C\lambda_{U,V}\frac{\beta_K^2}{|e_1|}\ugamma^TV\ugamma=
	%	C\lambda_{U,V}\frac{\beta_K^2}{|e_1|}\|\utau{}_h\|_{{\rm rot},K}^2
	\end{align*}
	Here $U=
	\begin{bmatrix}
	0 & 0 & 0 & 0 \\
	0 & 0 & 0 & 0 \\
	0 & 0 & 1 & 1 \\
	0 & 0 & 1 & 1 \\
	\end{bmatrix}$. Similarly
	\begin{align*}
	&
	\|\utau{}_{h}\cdot\ut{}_{2}-
	P_{e_{2}}({\utau}{}_{h}\cdot\ut{}_{2})\|_{0,e_{2}}^{2}
	=
	\frac{4\alpha_K^{2}(-1+\beta_K)^{2}}{3|e_{2}|}
	\ugamma^{T}U\ugamma	\\
	&\|
	\utau{}_{h}\cdot\ut{}_{3}-
	P_{e_{3}}(\utau{}_{h}\cdot\ut{}_{3})\|_{0,e_{3}}^{2}
	=
	\frac{4(-1+\alpha_K)^{2}\beta_K^{2}}{3|e_{3}|}
	\ugamma^{T}U\ugamma	\\
	&\|
	\utau{}_{h}\cdot\ut{}_{4}-
	P_{e_{4}}(\utau{}_{h}\cdot\ut{}_{4})\|_{0,e_{4}}^{2}
	=
	\frac{4\alpha_K^{2}(1+\beta_K)^{2}}{3|e_{4}|}
	\ugamma^{T}U\ugamma.
	\end{align*}
	By the generalized Rayleigh quotient theorem, then for all
	$\ugamma \in \mathbb{R}^4$
	\begin{equation*}
	\|
	\utau{}_{h}\cdot\ut{}_{e}-
	P_{e}(\utau{}_{h}\cdot\ut{}_{e})\|_{0,\partial {K}}^{2}
	\leqslant
	C\lambda_{U,V}h_K\|\utau{}_h\|_{{\rm rot},K}^2.
	\end{equation*}
	Here $\lambda_{U,V}=p(\alpha_K,\beta_K,\ur{}_K,\us{}_K)/q(\alpha_K,\beta_K,\ur{}_K,\us{}_K)$ with
	$$
	p(\alpha_K,\beta_K,\ur{}_K,\us{}_K) =
	9((3+3\alpha_K^{2}-\beta_K^2)\ur{}_K\cdot\ur{}_K+
	4\alpha_K\beta_K \ur{}_K\cdot\us{}_K+
	(3+3\beta_K^{2}-\alpha_K^2)\us{}_K\cdot\us{}_K+
	36)\ur{}_K\times\us{}_K
	$$
	and
	\begin{multline*}
	q(\alpha_K,\beta_K,\ur{}_K,\us{}_K) =
	4((3+3\alpha_K^{2}-\beta_K^2)(3+3\beta_K^{2}-\alpha_K^2)
	(\ur{}_K\cdot\ur{}_K)(\us{}_K\cdot\us{}_K)
	-4\alpha_K^{2}\beta_K^{2}(\ur{}_K\cdot\us{}_K)^2
	\\+(27-9\alpha_K^2+27\beta_K^2)\us{}_K\cdot\us{}_K-36\alpha_K\beta_K\ur{}_K\cdot\us{}_K
	+(27+27\alpha_K^2-9\beta_K^2)\ur{}_K\cdot\ur{}_K).
	\end{multline*}
	\eqref{inequ: rot consistency error} follows from the Cauchy-Schwarz inequality, the proof is completed.
\end{proof}

%
%
%\section{\color{red}Approximation error estimation}

%
%
%

\section{Finite element schemes for respective model problems}
\label{sec:FE schemes for model problems}

\subsection{A finite element scheme for the Poisson equation}
\label{subsec:a FE scheme for the Poisson equation}

We consider the Poisson problem with homogeneous boundary condition
\begin{equation*}
\left\{
\begin{aligned}
-\Delta u & =f &  \quad & \mbox{in} \ \Omega \\
u & =0 &  \quad & \mbox{on} \ \Gamma 
\end{aligned}
\right.
\label{equ:Poisson problem in numerical experiment}
\end{equation*}

The variational formulation is to find $u\in H^1_0(\Omega)$, such that 
\begin{equation}
\int_\Omega \nabla u\nabla v \ \mathrm{d}x=\int_\Omega fv \ \mathrm{d}x,\quad\forall\,v\in H^1_0(\Omega).
\label{equ:variational problem on Poisson }
\end{equation}
The finite element problem is to find $u_h\in V_{h0}^{\rm QBL}$, such that 
\begin{equation}
\sum_{K\in\mathcal{J}_h}\int_K\nabla u_h\nabla v_h\ \mathrm{d}x=\int_\Omega fv_h \ \mathrm{d}x,\quad\forall\,v_h\in V_{h0}^{\rm QBL}.
\label{equ:discrete problem on Poisson}
\end{equation}
\begin{theorem}
	Let $u\in H^2(\Omega)\cap H^1_0(\Omega)$ and $u_h$ be the solutions of \eqref{equ:variational problem on Poisson }, and \eqref{equ:discrete problem on Poisson}, respectively. Then
	\begin{equation}
	|u-u_h|_{1,h}\leqslant Ch\|u\|_{2,\Omega}\ \ \ \mbox{and}\ \ \ \|u-u_h\|_{0,\Omega}\leqslant Ch^2\|u\|_{2,\Omega}.
	\end{equation}
\end{theorem}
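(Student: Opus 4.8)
The plan is to obtain the two estimates by separate arguments: the energy estimate by the second Strang lemma, and the $L^2$ estimate by an Aubin--Nitsche duality whose two consistency ``cross terms'' are tamed by controlling a single bilinear coefficient. First I would record well-posedness of \eqref{equ:discrete problem on Poisson}: the form $a_h(v_h,w_h):=(\nabla_h v_h,\nabla_h w_h)$ satisfies $a_h(v_h,v_h)=|v_h|_{1,h}^2$, and $|v_h|_{1,h}=0$ forces $v_h$ to be piecewise constant, which by continuity at the interior vertices and vanishing at the boundary vertices must be identically zero; thus a discrete Poincar\'e--Friedrichs inequality makes $|\cdot|_{1,h}$ a norm and $a_h$ coercive, and Lax--Milgram applies. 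The second Strang lemma then gives
\begin{equation*}
|u-u_h|_{1,h}\leqslant C\Big(\inf_{v_h\in V_{h0}^{\rm QBL}}|u-v_h|_{1,h}+\sup_{0\neq\phi_h\in V_{h0}^{\rm QBL}}\frac{|a_h(u,\phi_h)-(f,\phi_h)|}{|\phi_h|_{1,h}}\Big).
\end{equation*}
The approximation term is $\leqslant|u-J_hu|_{1,h}\leqslant Ch|u|_{2,\Omega}$ by Theorem~\ref{thm:global intepolation error estimation}. For the consistency term, testing $-\Delta u=f$ against $\phi_h$ and integrating by parts cell by cell yields the identity $a_h(u,\phi_h)-(f,\phi_h)=E(\nabla u,\phi_h)$, which Theorem~\ref{thm:global estimate of E(u,v_h)} bounds by $Ch\|\nabla u\|_{1,\Omega}|\phi_h|_{1,h}\leqslant Ch\|u\|_{2,\Omega}|\phi_h|_{1,h}$. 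This gives $|u-u_h|_{1,h}\leqslant Ch\|u\|_{2,\Omega}$.

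For the $L^2$ estimate I would let $w\in H^2(\Omega)\cap H^1_0(\Omega)$ solve $-\Delta w=u-u_h$ with $\|w\|_{2,\Omega}\leqslant C\|u-u_h\|_{0,\Omega}$ by elliptic regularity, and write $e:=u-u_h$. Integrating $(e,-\Delta w)$ by parts elementwise (the $u$-side is conforming, so it is clean, and the boundary contributions on the $u_h$-side assemble into $E(\nabla w,u_h)$ since $\nabla w$ is single-valued), and using the Galerkin-type identity $a_h(e,v_h)=E(\nabla u,v_h)$ for $v_h=J_hw\in V_{h0}^{\rm QBL}$, I expect the representation
\begin{equation*}
\|e\|_{0,\Omega}^2=(\nabla_h e,\nabla_h(w-J_hw))+E(\nabla u,J_hw)+E(\nabla w,u_h).
\end{equation*}
The first term closes at once: by Theorem~\ref{thm:global intepolation error estimation} it is $\leqslant|e|_{1,h}\,|w-J_hw|_{1,h}\leqslant Ch|u|_{2,\Omega}\cdot Ch|w|_{2,\Omega}$, which after the energy estimate is $Ch^2\|u\|_{2,\Omega}\|e\|_{0,\Omega}$.

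The main obstacle, and the reason the textbook duality argument stalls, is the two consistency cross terms $E(\nabla u,J_hw)$ and $E(\nabla w,u_h)$: the generic bound of Theorem~\ref{thm:global estimate of E(u,v_h)} only yields $\mathcal{O}(h)$ for each, which would cap the whole estimate at $\mathcal{O}(h)$. The observation I would exploit is that the \emph{cellwise} consistency bound established inside Theorem~\ref{thm:global estimate of E(u,v_h)} is governed by the single bilinear coefficient $\iota_1$, namely $\big|\int_{\partial K}\uzeta\cdot\un\,(v_h-q)\,\mathrm{d}s\big|\leqslant Ch\,\iota_1(v_h)\,\|\uzeta\|_{1,K}$, because the entire edge jump $v_h-q$ originates from the $\xi\eta$-term and carries a factor $\alpha_K$ or $\beta_K$. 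Summing cellwise and applying Cauchy--Schwarz, each cross term is bounded by $Ch\big(\sum_K\iota_1(v_h)^2\big)^{1/2}\,\|\nabla(\cdot)\|_{1,\Omega}$, so everything reduces to showing $\big(\sum_K\iota_1^2\big)^{1/2}=\mathcal{O}(h)$ for the two relevant choices of $v_h$.

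For $v_h=J_hw$ this is a Bramble--Hilbert estimate: $\iota_1$ is a bounded functional on $H^2(K)$ that annihilates affine functions (the interpolant of an affine function has no $\xi\eta$ component), whence $|\iota_1(J_Kw)|\leqslant Ch_K|w|_{2,K}$ and $\sum_K\iota_1(J_hw)^2\leqslant Ch^2|w|_{2,\Omega}^2$; the commuting relation $\nabla_h J_Kw=\sqcap_K\nabla w$ of Theorem~\ref{thm:commutativity on a cell} identifies $\iota_1(J_Kw)$ with the coefficient $g_3(\sqcap_K\nabla w)$ and gives the same conclusion. For $v_h=u_h$ I would split $u_h=J_hu+(u_h-J_hu)$: the first part is handled exactly as above, and for the second I use the elementary inequality $\iota_1(v_h)^2\leqslant\lambda_{F,G}|v_h|_{1,K}^2$ with $\lambda_{F,G}=\mathcal{O}(1)$ (the generalized Rayleigh quotient already computed in Theorem~\ref{thm:global estimate of E(u,v_h)}) together with the energy estimate, giving $\sum_K\iota_1(u_h-J_hu)^2\leqslant C|u_h-J_hu|_{1,h}^2\leqslant Ch^2\|u\|_{2,\Omega}^2$. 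Hence both cross terms are $\mathcal{O}(h^2)$, and collecting the three terms yields $\|e\|_{0,\Omega}^2\leqslant Ch^2\|u\|_{2,\Omega}\|w\|_{2,\Omega}\leqslant Ch^2\|u\|_{2,\Omega}\|e\|_{0,\Omega}$; dividing gives $\|u-u_h\|_{0,\Omega}\leqslant Ch^2\|u\|_{2,\Omega}$. The delicate point throughout is precisely this reduction of the consistency error to $\iota_1$ and the recognition that $\iota_1$ is an order $h$ smaller for near-interpolants, which is what recovers the optimal $\mathcal{O}(h^2)$ rate even though the space contains no nontrivial conforming subspace.
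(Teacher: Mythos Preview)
Your argument is correct and follows essentially the same mechanism as the paper: both proofs recognize that the cellwise consistency error is controlled by the single bilinear coefficient $\iota_1$, and both recover the extra factor of $h$ by splitting the test function into an interpolant of a smooth function (handled by a Bramble--Hilbert/commuting-diagram argument giving $\iota_1=\mathcal{O}(h)$) plus a remainder controlled by the energy estimate and an inverse-type bound. The only difference is cosmetic: the paper introduces the \emph{discrete} dual solution $z_h$ and works with the decomposition $(\nabla_h e,\nabla_h(z-z_h))+(\nabla_h e,\nabla_h z_h)+(\nabla_h u_h,\nabla_h(z-z_h))$, then bounds $\sum_K h_K^2|z_h|_{2,K}^2$ (which is essentially $\sum_K\iota_1(z_h)^2$) via the split $z_h=(z_h-J_hz)+J_hz$; you bypass $z_h$ entirely by inserting $J_hw$ directly, which makes the two cross terms $E(\nabla u,J_hw)$ and $E(\nabla w,u_h)$ explicit from the start and saves introducing an auxiliary discrete problem.
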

\begin{proof}
	The former is evident by Theorem \ref{thm:global intepolation error estimation} and Theorem \ref{thm:global estimate of E(u,v_h)}. Then we are going to show the latter. Let $z\in H^2(\Omega)\cap H^1_0(\Omega)$ and $z_h\in V_{h0}^{\rm QBL}$ be the solutions of the two problems below, respectively,
	\begin{equation*}
		(\nabla v,,\nabla z)=(u-u_h,v),\ \forall v\in H_0^1(\Omega) 
		\ \ \ \mbox{and}\ \ \ 
		(\nabla_h v_h,\nabla_h z_h)=(u-u_h,v_h),\ \forall v_h\in V_{h0}^{\rm QBL}.
	\end{equation*}
	Then it holds that
	\begin{equation}
		\|u-u_h\|^2_{0,\Omega}=(\nabla_h (u-u_h),\nabla_h (z-z_h))+(\nabla_h (u-u_h),\nabla_h z_h)+(\nabla_h u_h,\nabla_h (z-z_h)).
		\label{equ:Nitsche}
	\end{equation}
	For the first term in the right side of \eqref{equ:Nitsche}, we utilize the regularity of solution on a convex domain, then it holds that
	\begin{equation}
		(\nabla_h (u-u_h),\nabla_h (z-z_h))\leqslant Ch^2\|u\|_{2,\Omega}\|z\|_{2,\Omega}\leqslant
		Ch^2\|u\|_{2,\Omega}\|u-u_h\|_{0,\Omega}.
		\label{inequ:first term in dual argument}
	\end{equation}
	For the second term in the right side of \eqref{equ:Nitsche}, repeating the similar arguments in Theorem \ref{thm:global estimate of E(u,v_h)} yields 
	\begin{equation*}
		(\nabla_h (u-u_h),\nabla_h z_h)\leqslant
		C\sum_{K\in\mathcal{J}{}_{h}}h_K^2\|u\|_{2,K}|z_h|_{2,K}\leqslant
		Ch\|u\|_{2,\Omega}(\sum_{K\in\mathcal{J}{}_{h}}h_K^2|z_h|^2_{2,K})^{\frac{1}{2}}.
	\end{equation*}
	Since $|z-J_{h}z|_{1,K}\leqslant Ch_{K}|z|_{2,K}$, and $|J_hz|_{2,K}\leqslant C|z|_{2,K}$ (by Theorem \ref{thm:commutativity on a cell} and Lemma \ref{lemma:H^{1} stable on interpolation associated with QRT}), then it holds that
	\begin{align*}
		\sum_{K\in\mathcal{J}{}_{h}}h_K^2|z_h|^2_{2,K}
		&\leqslant C\sum_{K\in\mathcal{J}{}_{h}}h_K^2(|z_h-J_hz|^2_{2,K}+|J_hz|^2_{2,K})\\
		&\leqslant
		C\sum_{K\in\mathcal{J}{}_{h}}(|z_h-J_hz|^2_{1,K}+h_K^2|J_hz|^2_{2,K})\\
		&\leqslant
		C\sum_{K\in\mathcal{J}{}_{h}}(|z-z_h|^2_{1,K}+|z-J_hz|^2_{1,K}+h_K^2|J_hz|^2_{2,K})\leqslant Ch^2\|z\|^2_{2,\Omega}.
	\end{align*}
	This way, we have $(\nabla_h (u-u_h),\nabla_h z_h)\leqslant Ch^2\|u\|_{2,\Omega}\|z\|_{2,\Omega}\leqslant Ch^2\|u\|_{2,\Omega}\|u-u_h\|_{0,\Omega}$. 
	Similarly, so does the third term in the right side of \eqref{equ:Nitsche} hold. Thus we have 
	\begin{equation}
		(\nabla_h (u-u_h),\nabla_h z_h)+(\nabla_h u_h,\nabla_h (z-z_h))\leqslant Ch^2\|u\|_{2,\Omega}\|u-u_h\|_{0,\Omega}.
		\label{inequ:second and third term in dual argument}
	\end{equation}
	Substituting \eqref{inequ:first term in dual argument} and \eqref{inequ:second and third term in dual argument} into \eqref{equ:Nitsche}, the proof is completed.
\end{proof}
\subsubsection{Numerical verification}

We choose the computational domain to be the quadrilateral with vertexes
$(0,0)$, $(1,0)$, $(2,2)$, $(-1,1)$. 
The data $f$ is chosen such that the exact solution is the polynomial 
$u = y(x+y)(x-3y+4)(2x-y-2)$. 
We subdivide the domain with quadrilateral grids and triangle grids,  respectively, 
and numerical solutions are computed on both grids. 
To generate quadrilateral grids, 
we use bisection strategy.
To generate triangle grids, 
we firstly subdivide the domain with quadrilateral grids, 
then bisect all of them each to two triangles, see Figure 
\ref{fig:Poissonshowmesh}. We first test the performance of the ${\rm QBL}$ element on the quadrilateral grids, then we test Courant element on the triangle grids as a comparison. The results are recorded in Table \ref{tab:tab6}.

\begin{figure}[htbp]
	\vspace{-2cm}
	\centering
	\subfigure[]{
		\begin{minipage}[t]{0.23\linewidth}
			\centering
			\includegraphics[width=1.651in]{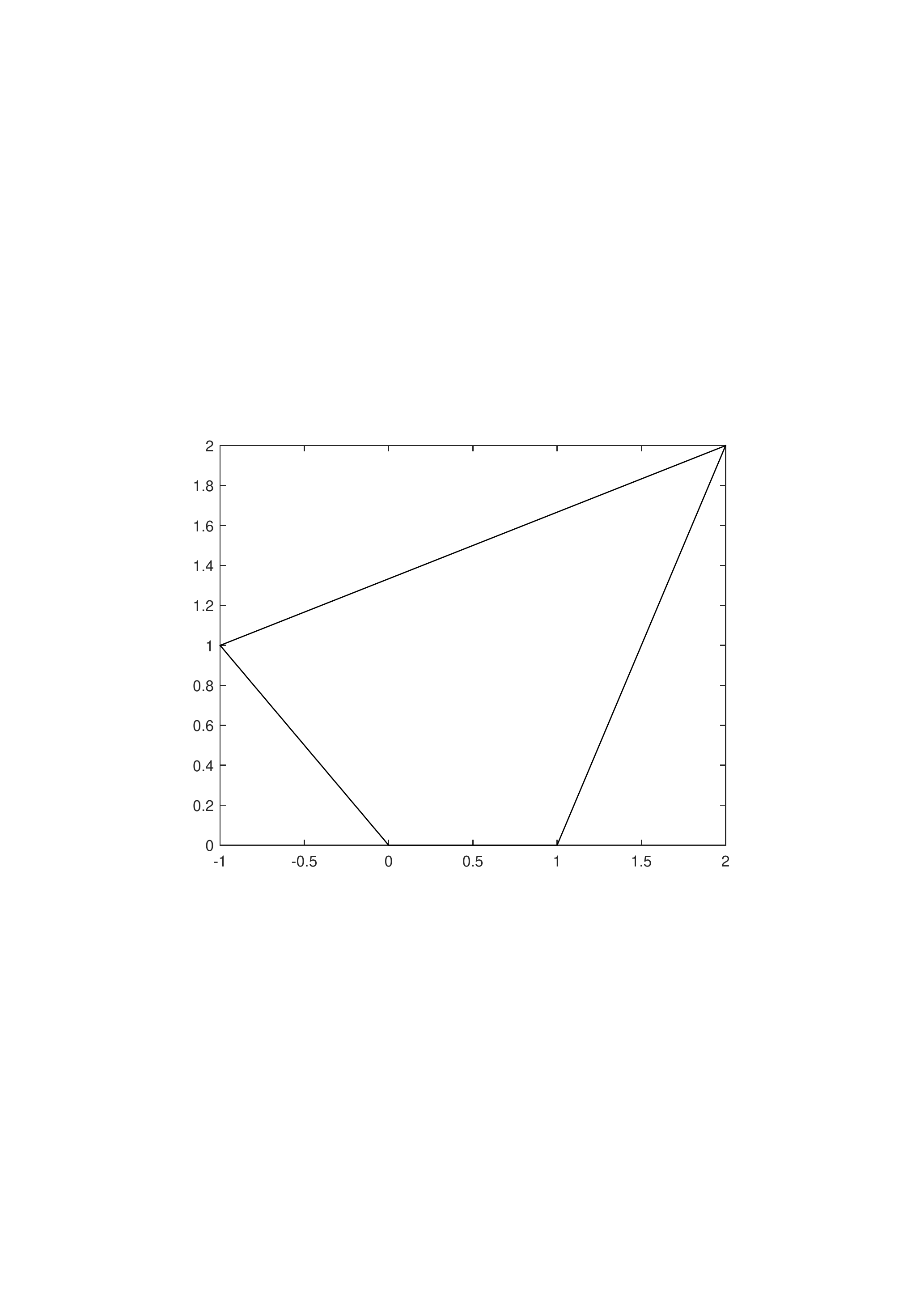}\\
			\vspace{-3.8cm}
			\includegraphics[width=1.651in]{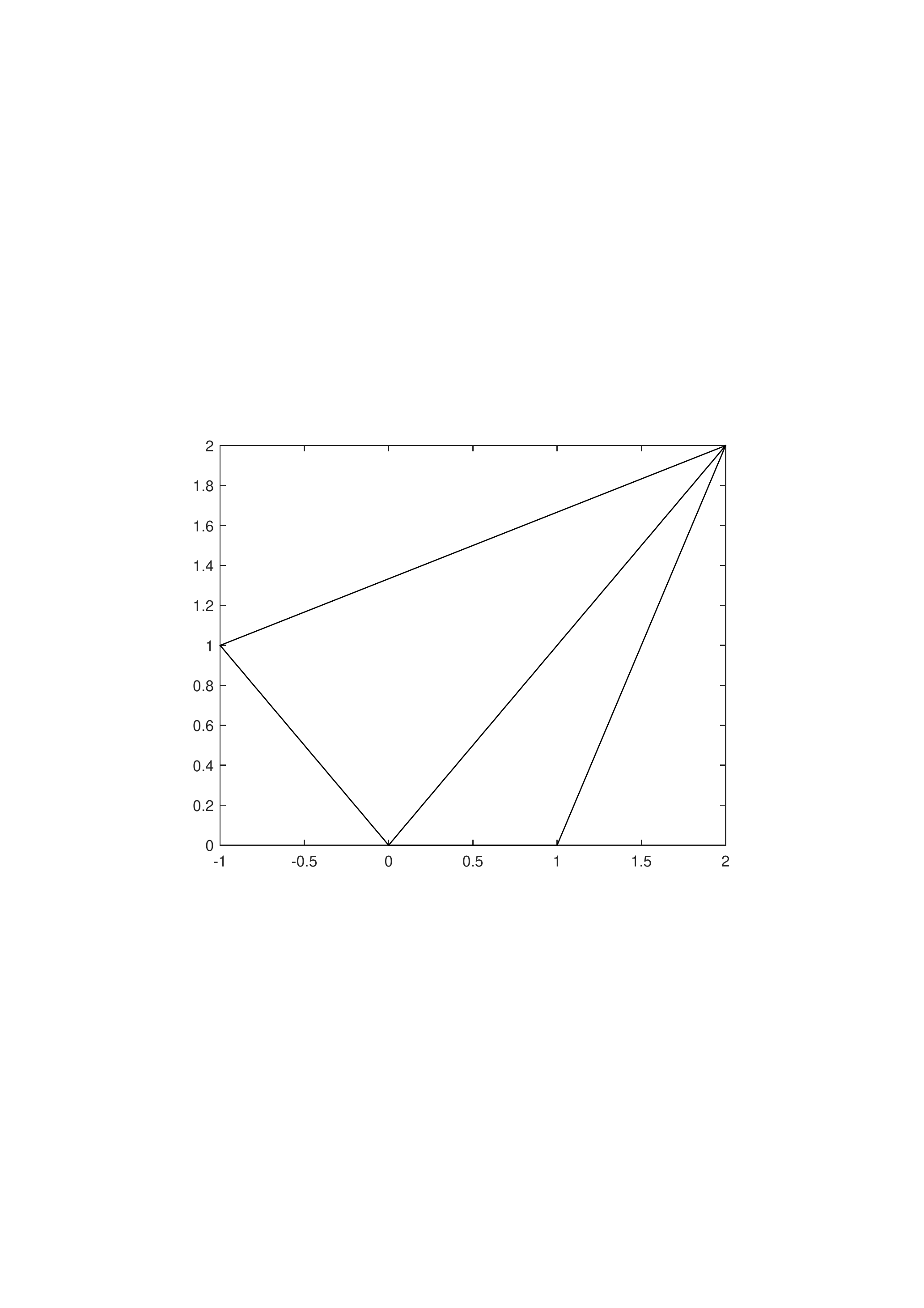}\\
			\vspace{-1.8cm}
		\end{minipage}
	}
	\subfigure[]{
		\begin{minipage}[t]{0.23\linewidth}
			\centering
			\includegraphics[width=1.651in]{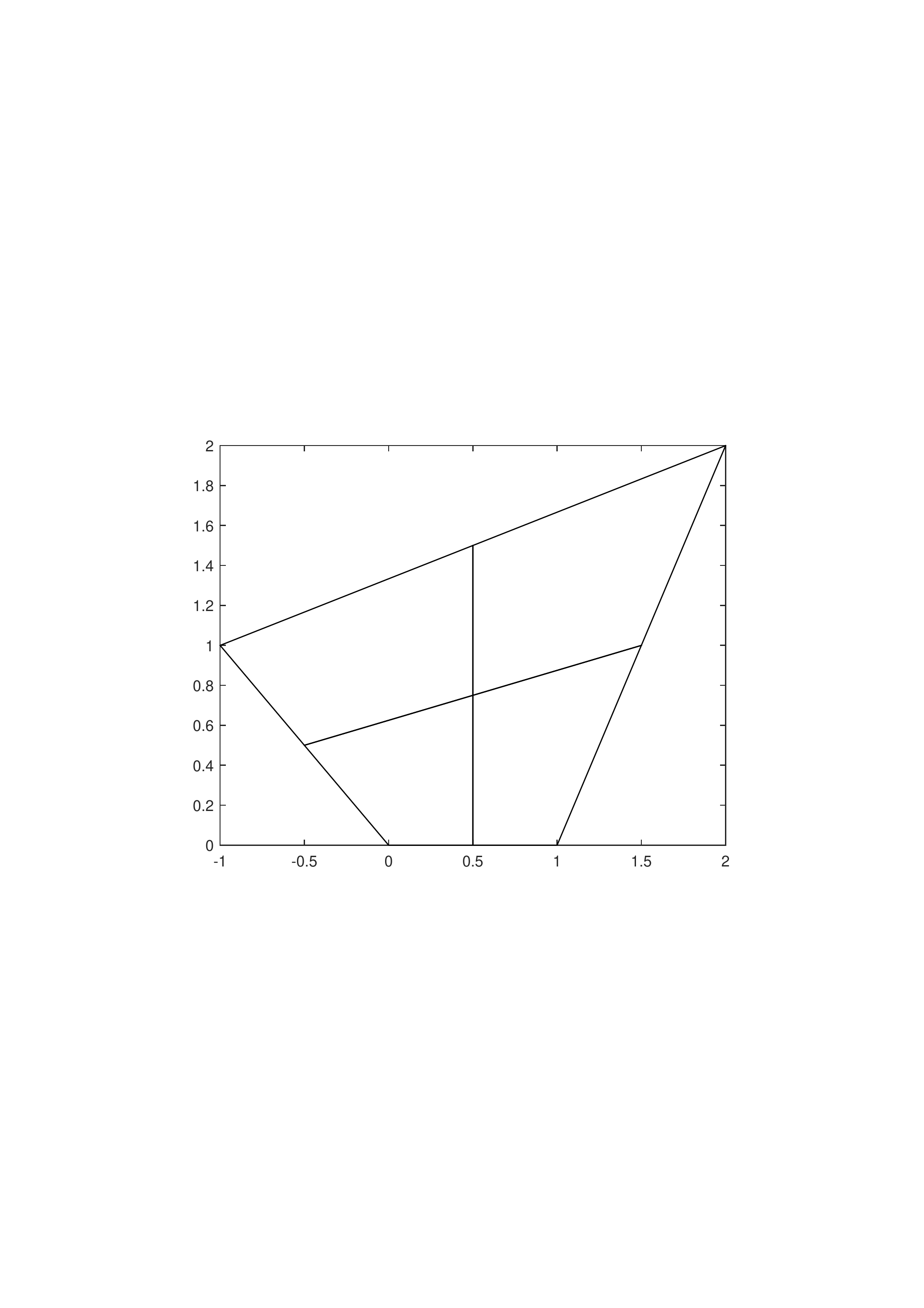}\\
			\vspace{-3.8cm}
			\includegraphics[width=1.651in]{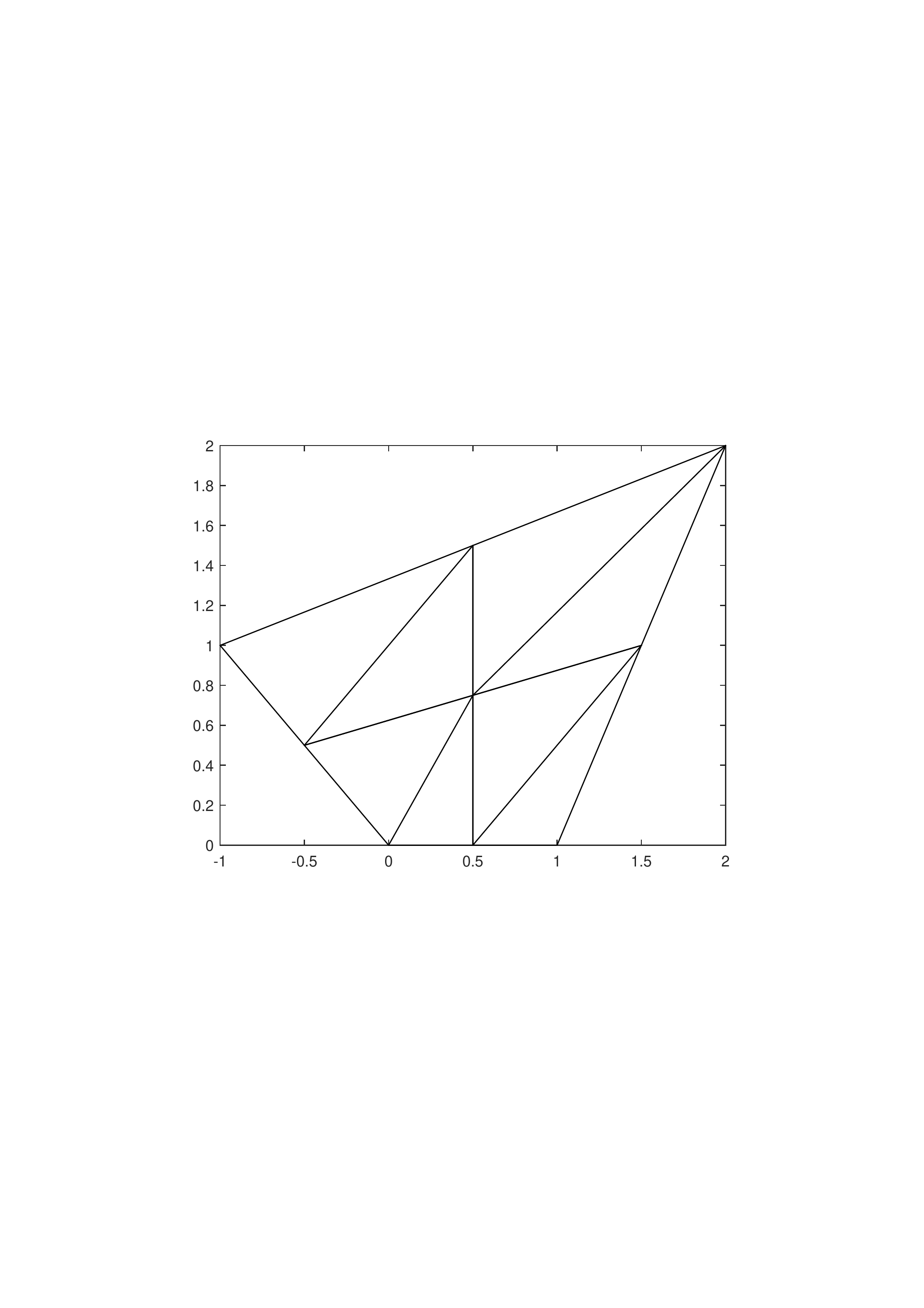}\\
			\vspace{-1.8cm}
		\end{minipage}
	}
	\subfigure[]{
		\begin{minipage}[t]{0.23\linewidth}
			\centering
			\includegraphics[width=1.651in]{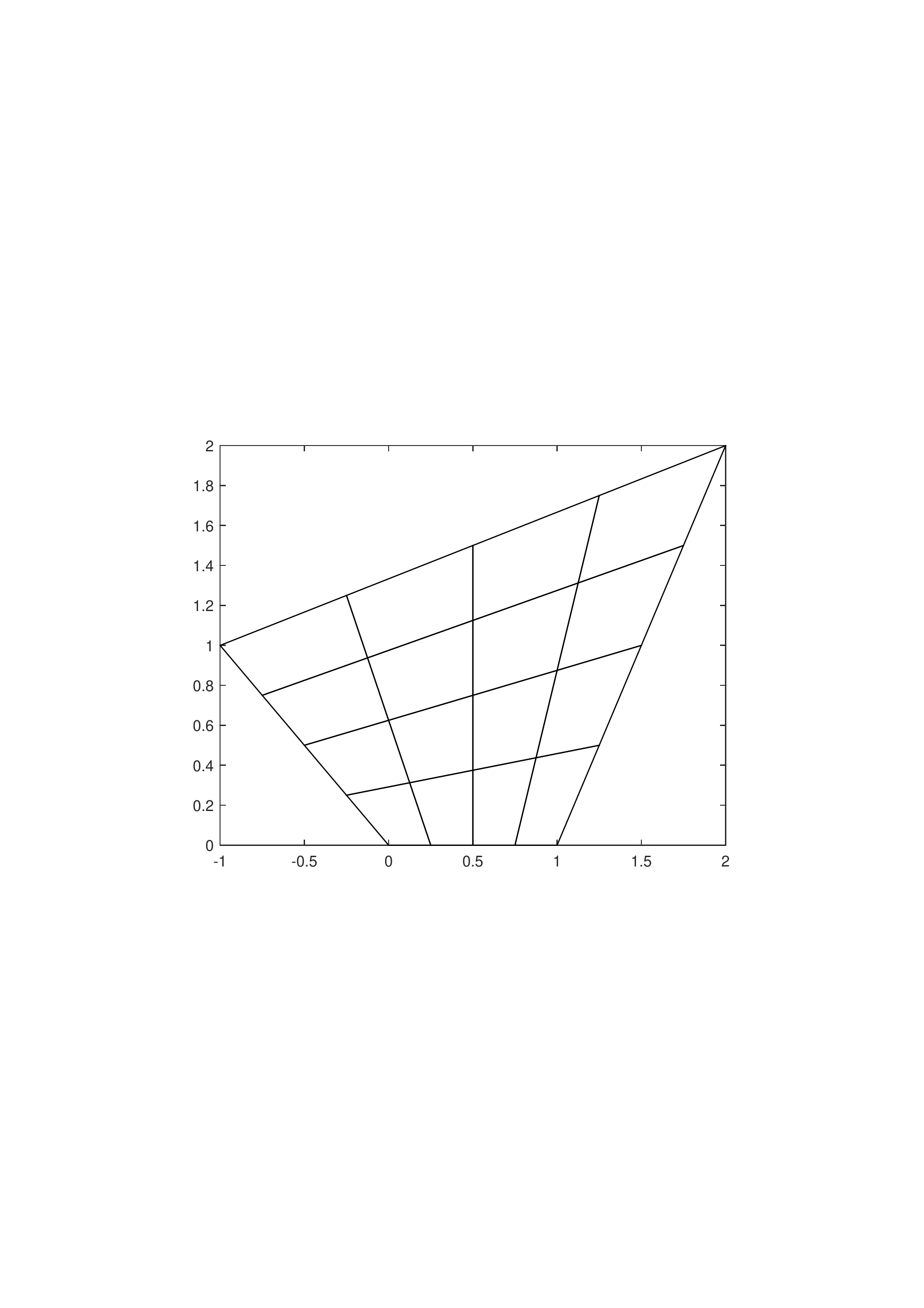}\\
			\vspace{-3.8cm}
			\includegraphics[width=1.651in]{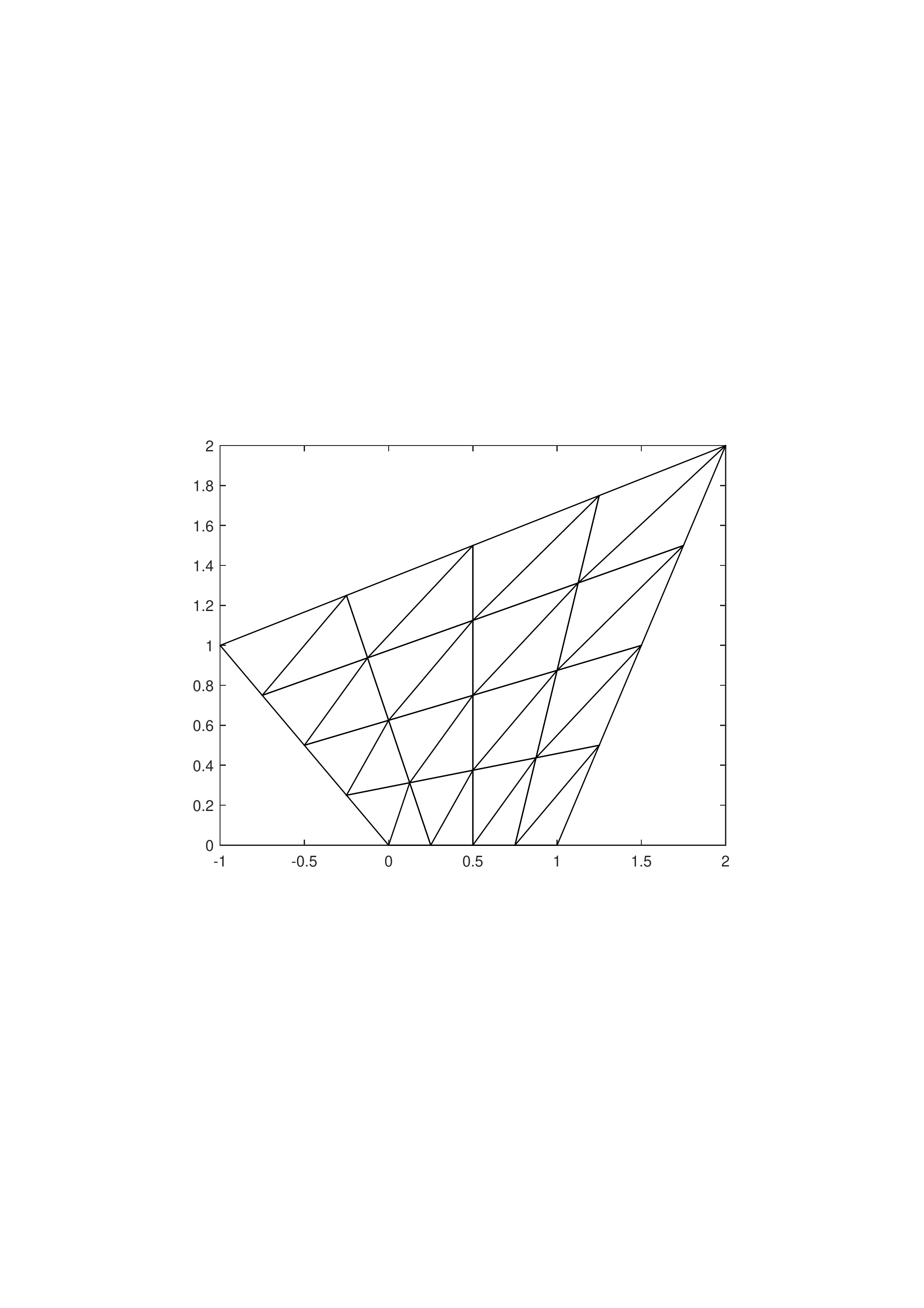}\\
			\vspace{-1.8cm}
		\end{minipage}
	}
	\subfigure[]{
		\begin{minipage}[t]{0.23\linewidth}
			\centering
			\includegraphics[width=1.651in]{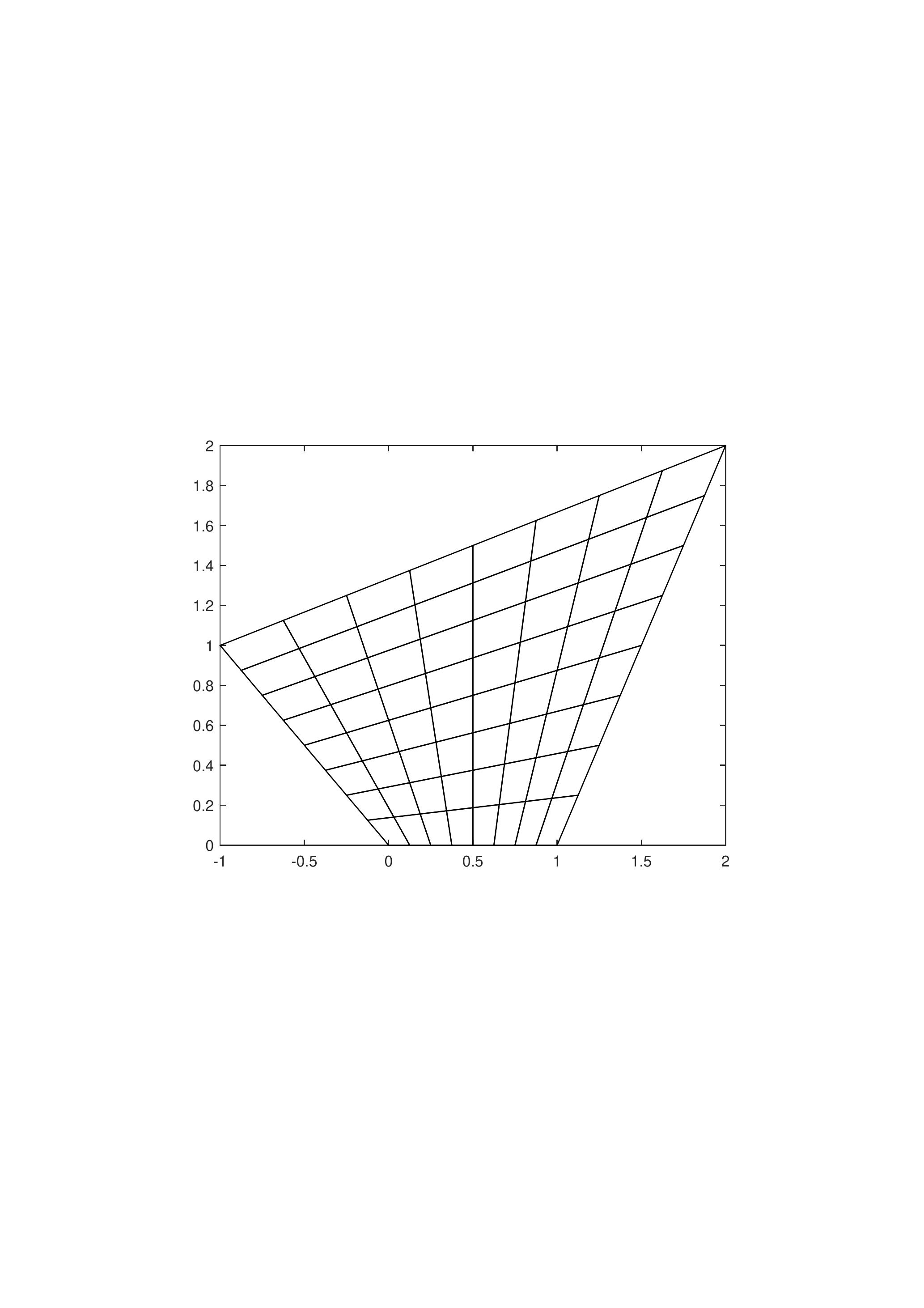}\\
			\vspace{-3.8cm}
			\includegraphics[width=1.651in]{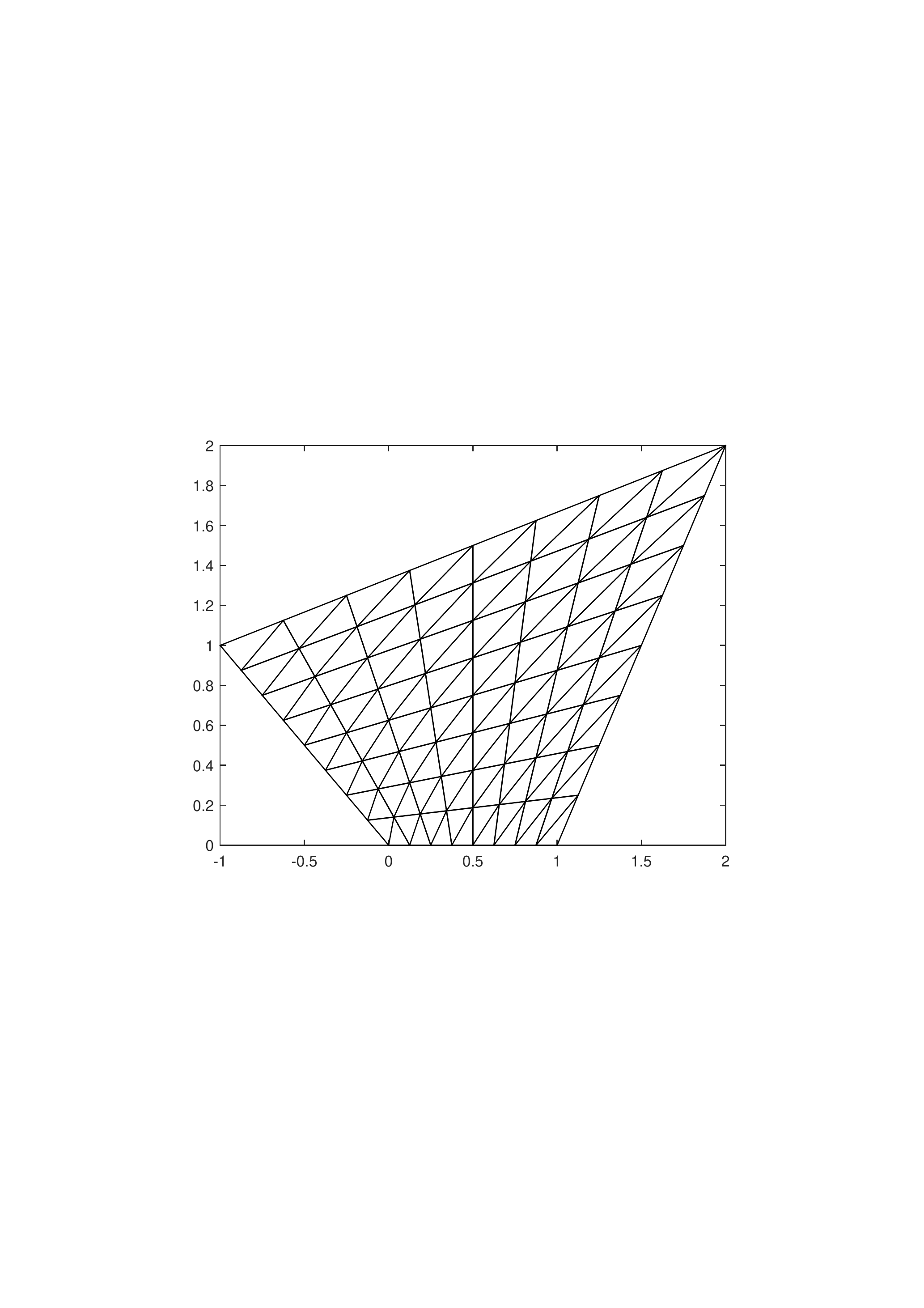}\\
			\vspace{-1.8cm}
		\end{minipage}
	}
	\centering
	\caption{Two different sequences of grids}
	\vspace{-0.2cm}
	\label{fig:Poissonshowmesh}
\end{figure}

\begin{table}[htbp]
	\caption{Numerical results for Poisson problem}\label{tab:tab6}
	\begin{center}
		\begin{tabular}{|c|ccccc|}
			\hline
			Size($\mathcal{J}_{h}$) & 
			\multicolumn{2}{c}{On quadrilateral grids} & 
			\quad & 
			\multicolumn{2}{c|}{On triangle grids} 
			\\ 
			\quad & 
			$|u-u_{h}|_{1,h}$ & 
			$\|u-u_{h}\|_{0,\Omega}$ & 
			\quad & 
			$|u-u_{h}|_{1,h}$ & 
			$\|u-u_{h}\|_{0,\Omega}$
			\\ \hline
			$8\times8$ & 
			1.67E0 & 
			1.39E-1 & 
			\quad & 
			3.59E0 & 
			2.57E-1
			\\
			$16\times16$ & 
			8.35E-1 & 
			3.52E-2 & 
			\quad & 
			1.83E0 & 
			6.73E-2
			\\
			$32\times32$ & 
			4.18E-1 & 
			9.69E-3 & 
			\quad & 
			9.23E-1 & 
			1.70E-2
			\\
			$64\times64$ & 
			2.09E-1 & 
			2.42E-3 & 
			\quad & 
			4.62E-1 & 
			4.57E-3
			\\ \hline
			Convergence order & 
			1 & 
			2 & 
			\quad & 
			1 & 
			2
			\\ \hline
		\end{tabular}
	\end{center}
\end{table}

Figure \ref{fig:convergence order on Poisson problem} 
reports on approximation results of ${\rm QBL}$ and Courant elements for Poisson equation.
The $x$-axis and the $y$-axis 
represent the logarithm of grid size $h$ and of the error, respectively. 
The dashed line and the solid line 
represent the error associated with the norm 
$|\cdot|_{1,h}$ and $\|\cdot\|_{0,\Omega}$, respectively.
The results confirm our conclusion: 
a clear first-order of convergence is observed with $|\cdot|_{1,h}$. 

\begin{figure}[htbp]
	\vspace{-7.5cm}
	\centering
	\includegraphics[width=1\linewidth]{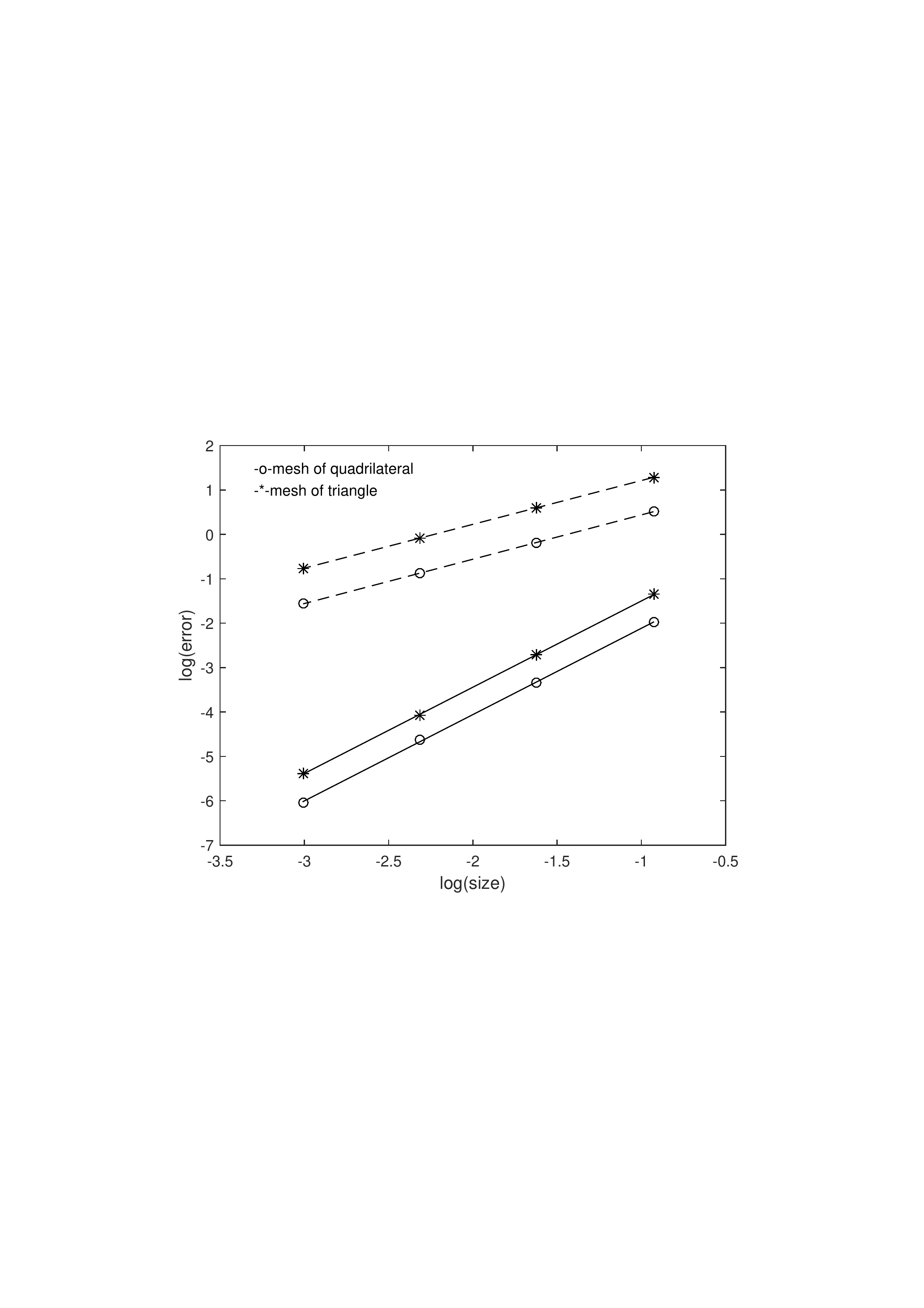}
	\centering
	\vspace{-8cm}
	\caption{The log-log plot of the error of ${\rm QBL}$ and Courant elements for Poisson equation}
	\label{fig:convergence order on Poisson problem}
\end{figure}

\subsection{Application on Laplace eigenvalue equation}
\label{subsec:application on Laplace eigenvalue equation}

We consider the Laplace eigenvalue problem with homogeneous boundary condition
\begin{equation}\label{eq:evpoisson}
\left\{
\begin{aligned}
-\Delta u & =\lambda u &  \quad & \mbox{in} \ \Omega \\
u & =0 &  \quad & \mbox{on} \ \Gamma.
\end{aligned}
\right.
%\label{equ:Laplace eigenvalue problem in numerical experiment}
\end{equation}

The variational formulation is to find $(\lambda,u)\in \mathbb{R}\times H^1_0(\Omega)$, such that 
\begin{equation}
\int_\Omega \nabla u\nabla v \ \mathrm{d}x=\lambda\int_\Omega uv \ \mathrm{d}x,\quad\forall\,v\in H^1_0(\Omega).
\label{eq:variational problem on evpoisson}
\end{equation}
The finite element problem is to find $(\lambda_h,u_h)\in \mathbb{R}\times V_{h0}^{\rm QBL}$, such that 
\begin{equation}
\sum_{K\in\mathcal{J}_h}\int_K\nabla u_h\nabla v_h \ \mathrm{d}x=\lambda_h\int_\Omega u_hv_h \ \mathrm{d}x,\quad\forall\,v_h\in V_{h0}^{\rm QBL}.
\label{eq:dicrete on evpoisson}
\end{equation}
\begin{theorem}
	Let the eigenvalues of the problem \eqref{eq:variational problem on evpoisson} and \eqref{eq:dicrete on evpoisson} be sorted from small to big. Let $(\lambda, u)$ and $(\lambda_h,u_h)$ be the $k$-th eigenpair of \eqref{eq:variational problem on evpoisson} and \eqref{eq:dicrete on evpoisson}, respectively. Then for $h$ small enough,
	\begin{equation}
	|\lambda-\lambda_h|=\mathcal{O}(h^2)\ \ \ \mbox{and}\ \ \ |u-u_h|_{1,h}=\mathcal{O}(h).
	\end{equation}
\end{theorem}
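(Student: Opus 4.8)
The plan is to transfer the a priori estimates already established for the source (Poisson) problem to the eigenvalue problem through solution operators, to invoke the classical spectral approximation theory of self-adjoint operators for the qualitative convergence of the eigenpairs together with the eigenfunction error rates, and finally to read off the $\mathcal{O}(h^2)$ eigenvalue rate from an algebraic identity in which the nonconforming consistency term turns out to be of second order. Throughout I normalize $\|u\|_{0,\Omega}=\|u_h\|_{0,\Omega}=1$ and abbreviate the broken bilinear form by $a_h(v,w)=\sum_{K\in\mathcal{J}_h}\int_K\nabla v\cdot\nabla w\,\mathrm{d}x$.

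First I would introduce $T:L^2(\Omega)\to H^1_0(\Omega)$ and $T_h:L^2(\Omega)\to V^{\rm QBL}_{h0}$ by $(\nabla Tf,\nabla v)=(f,v)$ for all $v\in H^1_0(\Omega)$ and $a_h(T_hf,v_h)=(f,v_h)$ for all $v_h\in V^{\rm QBL}_{h0}$. Both operators are compact and self-adjoint on $L^2(\Omega)$, and the eigenpairs of \eqref{eq:variational problem on evpoisson} and \eqref{eq:dicrete on evpoisson} are precisely the reciprocal eigenpairs of $T$ and $T_h$. From the elliptic regularity $\|Tf\|_{2,\Omega}\le C\|f\|_{0,\Omega}$ on the convex domain and the $L^2$ estimate of the Poisson scheme proved above, I obtain $\|(T-T_h)f\|_{0,\Omega}\le Ch^2\|f\|_{0,\Omega}$, so that $\|T-T_h\|_{\mathcal{L}(L^2,L^2)}\to 0$. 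The spectral approximation theory for self-adjoint operators then yields, for $h$ small, the convergence $(\lambda_h,u_h)\to(\lambda,u)$ of the $k$-th eigenpair (choosing, for a multiple eigenvalue, the member $u$ of the eigenspace best approximated by $u_h$); and comparing $u_h$ with the discrete source solution $T_h(\lambda u)$, whose distance to $u=T(\lambda u)$ is controlled by the source estimates, gives the eigenfunction rates $|u-u_h|_{1,h}\le Ch\|u\|_{2,\Omega}$ and $\|u-u_h\|_{0,\Omega}\le Ch^2\|u\|_{2,\Omega}$, where $u\in H^2(\Omega)\cap H^1_0(\Omega)$ by the regularity of Laplace eigenfunctions on a convex domain.

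The eigenvalue rate I would extract from the identity
\begin{equation*}
\lambda_h-\lambda=|u-u_h|_{1,h}^2-\lambda\|u-u_h\|_{0,\Omega}^2+2E(\nabla u,u_h),
\end{equation*}
obtained by expanding $a_h(u-u_h,u-u_h)-\lambda\|u-u_h\|_{0,\Omega}^2$ and using $a_h(u_h,u_h)=\lambda_h$, $a_h(u,u)=\lambda$ together with the observation that $a_h(u,u_h)-\lambda(u,u_h)=E(\nabla u,u_h)$ (since $\dv\nabla u=\Delta u=-\lambda u$). By the eigenfunction rates the first term on the right is $\mathcal{O}(h^2)$ and the second is $\mathcal{O}(h^4)$, so the whole argument reduces to showing that the consistency term is of second order.

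The hard part is precisely to improve the consistency estimate of Theorem \ref{thm:global estimate of E(u,v_h)} from $\mathcal{O}(h)$ to $\mathcal{O}(h^2)$ when the test function is the discrete eigenfunction, and I would do this through the splitting $u_h=(u_h-J_hu)+J_hu$. For the first piece, Theorem \ref{thm:global estimate of E(u,v_h)} gives $|E(\nabla u,u_h-J_hu)|\le Ch\|u\|_{2,\Omega}|u_h-J_hu|_{1,h}$, and $|u_h-J_hu|_{1,h}\le|u-u_h|_{1,h}+|u-J_hu|_{1,h}=\mathcal{O}(h)$ by the eigenfunction energy rate and Theorem \ref{thm:global intepolation error estimation}, so this piece is $\mathcal{O}(h^2)$. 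For the second piece I would return to the cell-wise estimate hidden in the proof of Theorem \ref{thm:global estimate of E(u,v_h)}, namely $|E(\nabla u,v_h)|_K\le Ch_K\,|\iota_1^{(K)}|\,\|\nabla u\|_{1,K}$, in which $\iota_1^{(K)}$ is the coefficient of $\xi\eta$ of $v_h|_K$ and the prefactor $h_K$ already carries the asymptotic-parallelogram smallness $\alpha_K,\beta_K=\mathcal{O}(h)$. Because $J_hu$ is the nodal interpolant of a smooth function and the functional $u\mapsto\iota_1^{(K)}(J_hu)$ annihilates affine functions, a Bramble--Hilbert and scaling argument yields $|\iota_1^{(K)}(J_hu)|\le Ch_K|u|_{2,K}$; summing over cells gives $|E(\nabla u,J_hu)|\le C\sum_{K}h_K^2|u|_{2,K}\|u\|_{2,K}\le Ch^2\|u\|_{2,\Omega}^2$. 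Combining the two pieces gives $E(\nabla u,u_h)=\mathcal{O}(h^2)$, and substitution into the identity produces $|\lambda-\lambda_h|=\mathcal{O}(h^2)$, the companion bound $|u-u_h|_{1,h}=\mathcal{O}(h)$ being the eigenfunction energy rate. The step I expect to be most delicate, and would verify most carefully, is the refined bound $|\iota_1^{(K)}(J_hu)|\le Ch_K|u|_{2,K}$ on genuinely non-parallelogram cells, since the passage from nodal values to the $\xi\eta$-coefficient is governed by $\alpha_K,\beta_K$ through the local basis \eqref{equ:local dual basis of QBL}.
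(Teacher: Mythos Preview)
The paper's own proof consists of the single sentence ``The theorem is proved by the standard technique,'' so there is no detailed argument to compare against. Your proposal is a correct and careful realization of that standard technique for nonconforming eigenvalue approximation: operator convergence from the $L^2$ source estimate, the Babu\v{s}ka--Osborn framework for the eigenfunction rates, and the algebraic identity $\lambda_h-\lambda=|u-u_h|_{1,h}^2-\lambda\|u-u_h\|_{0,\Omega}^2+2E(\nabla u,u_h)$ for the eigenvalue. The one nonroutine step---upgrading $E(\nabla u,u_h)$ from $\mathcal{O}(h)$ to $\mathcal{O}(h^2)$ via the splitting $u_h=(u_h-J_hu)+J_hu$ and the Bramble--Hilbert bound $|\iota_1^{(K)}(J_hu)|\le Ch_K|u|_{2,K}$ (the functional annihilates affines since $J_K$ reproduces $P_1$ and linear functions have no $\xi\eta$ component)---is correctly identified and handled; your caution about the $\alpha_K,\beta_K$-dependence of the passage from nodal values to $\iota_1$ is well placed but harmless, since the coefficients $\phi_i^{(1)}$ in \eqref{equ:local dual basis of QBL} are uniformly bounded on shape-regular cells.
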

\begin{proof}
	The theorem is proved by the standard technique.
\end{proof}
\subsubsection{Numerical verification}
We choose the computational domain to be the unit square 
$\Omega = (0,1)\times(0,1)$. 
The eigenvalue $\lambda$ is chosen 
such that the exact solution is the function 
$u = \sin(\pi x)\sin(\pi y)$. 
We first divide the computational domain into four trapezoids, 
then use the same strategy as Subsection \ref{subsec:a FE scheme for the Poisson equation} to generate the grids, see Figure \ref{fig:Laplaeigshowmesh}, and repeat numerical test by same elements as before. The results are recorded in Table \ref{tab:tab7}.

\begin{figure}[htbp]
	\vspace{-2.1cm}
	\centering
	\subfigure[]{
		\begin{minipage}[t]{0.23\linewidth}
			\centering
			\includegraphics[width=1.651in]{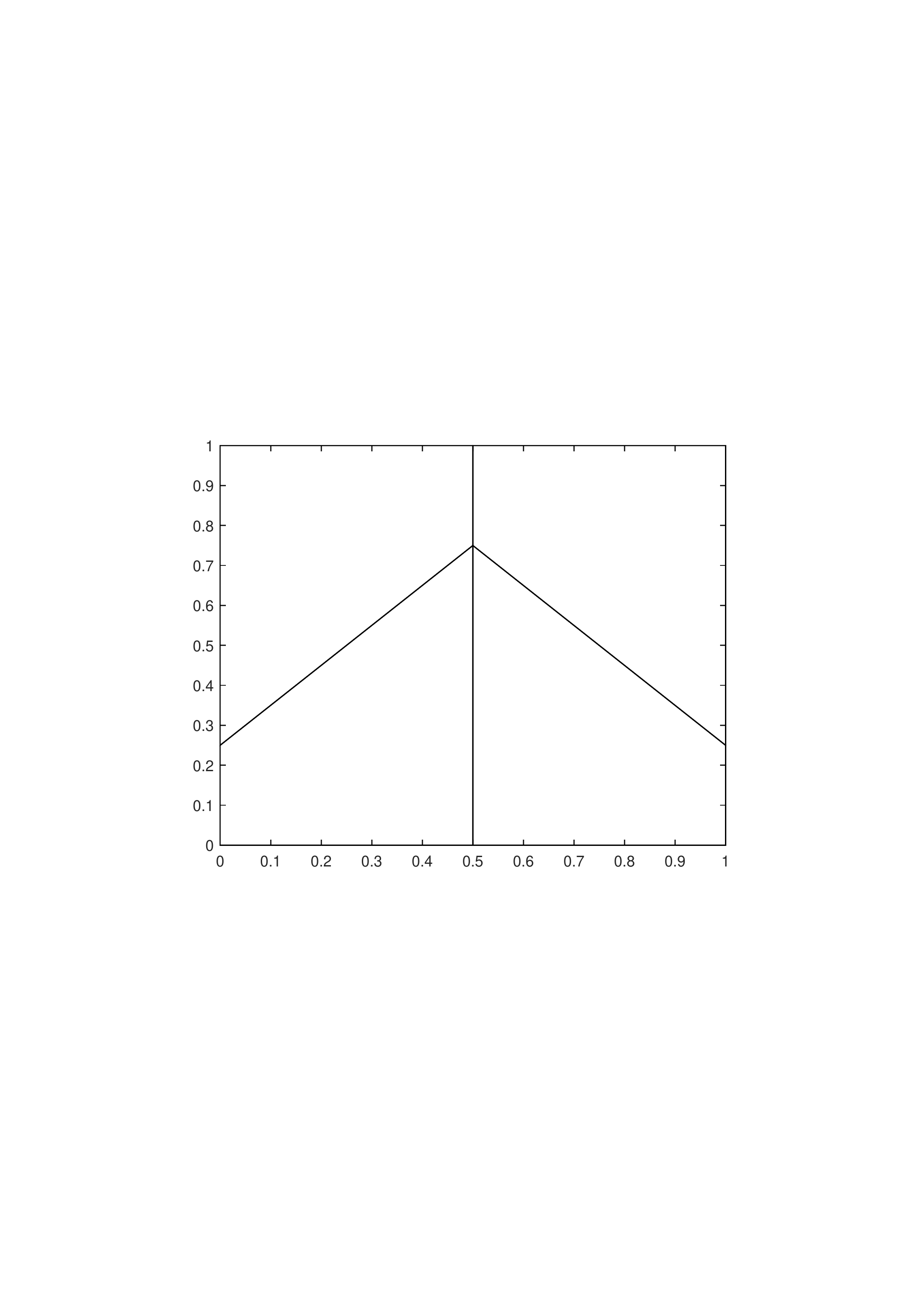}\\
			\vspace{-3.8cm}
			\includegraphics[width=1.651in]{Laplaeigmesh_1.pdf}\\
			\vspace{-1.8cm}
		\end{minipage}
	}
	\subfigure[]{
		\begin{minipage}[t]{0.23\linewidth}
			\centering
			\includegraphics[width=1.651in]{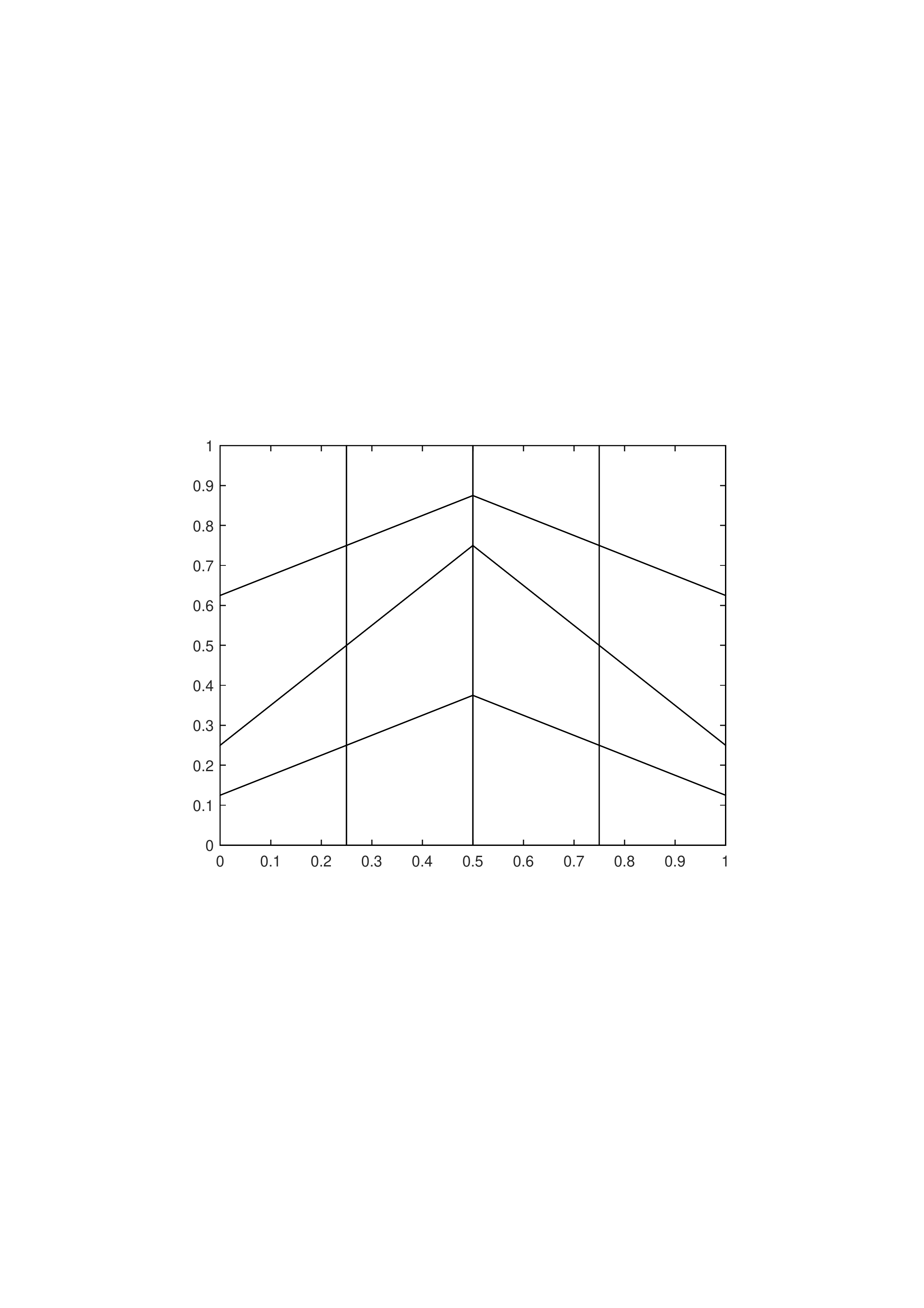}\\
			\vspace{-3.8cm}
			\includegraphics[width=1.651in]{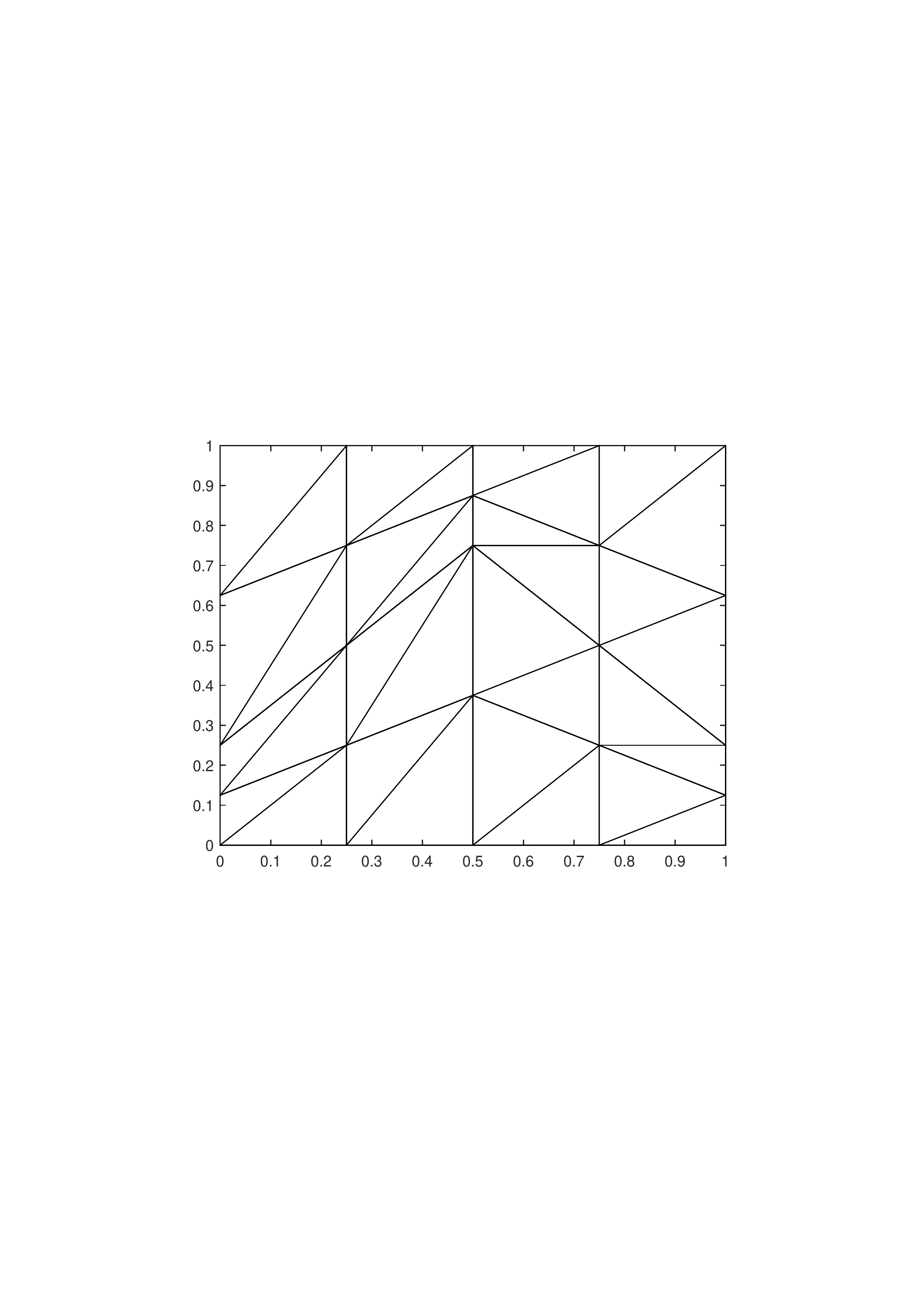}\\
			\vspace{-1.8cm}
		\end{minipage}
	}
	\subfigure[]{
		\begin{minipage}[t]{0.23\linewidth}
			\centering
			\includegraphics[width=1.651in]{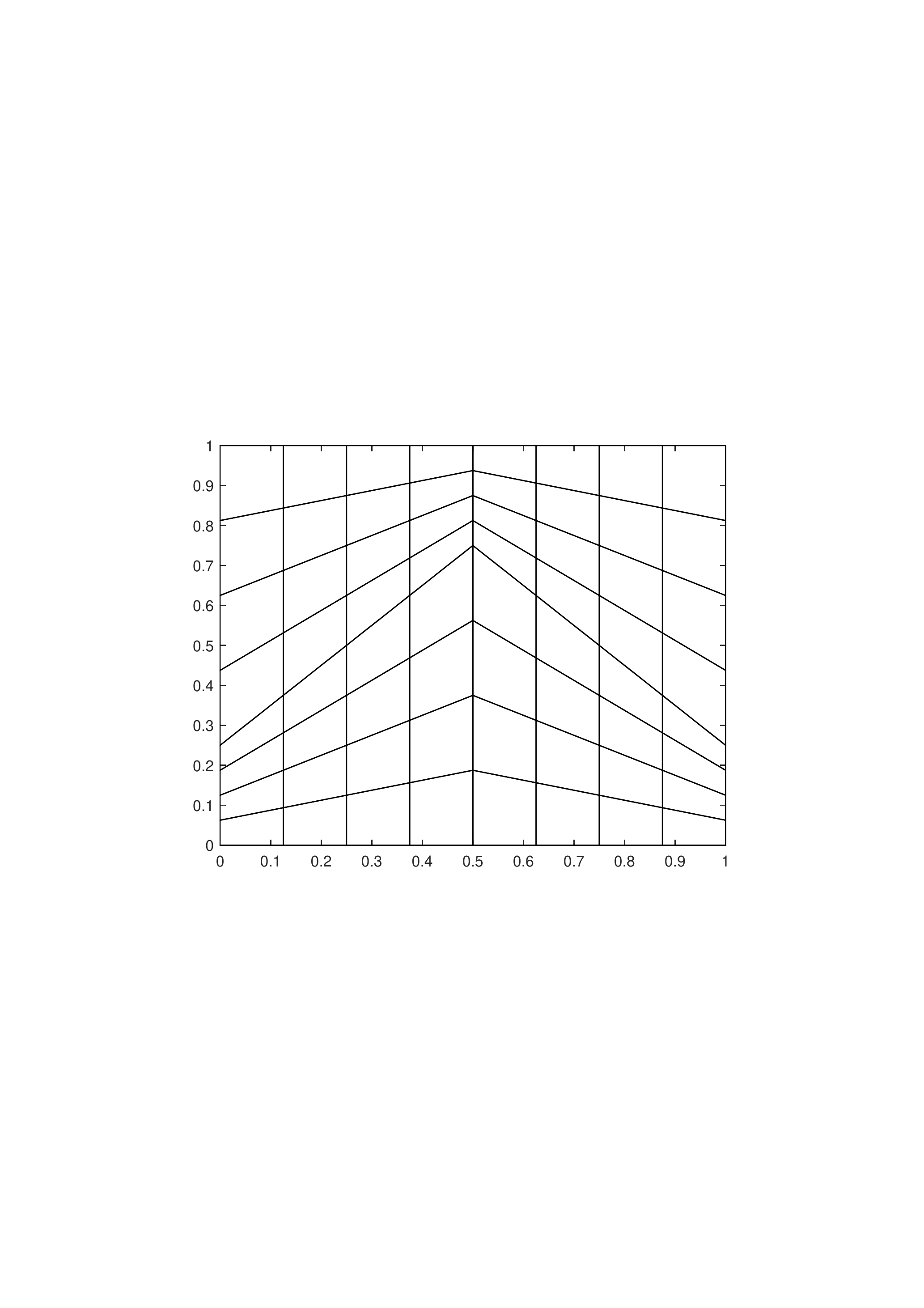}\\
			\vspace{-3.8cm}
			\includegraphics[width=1.651in]{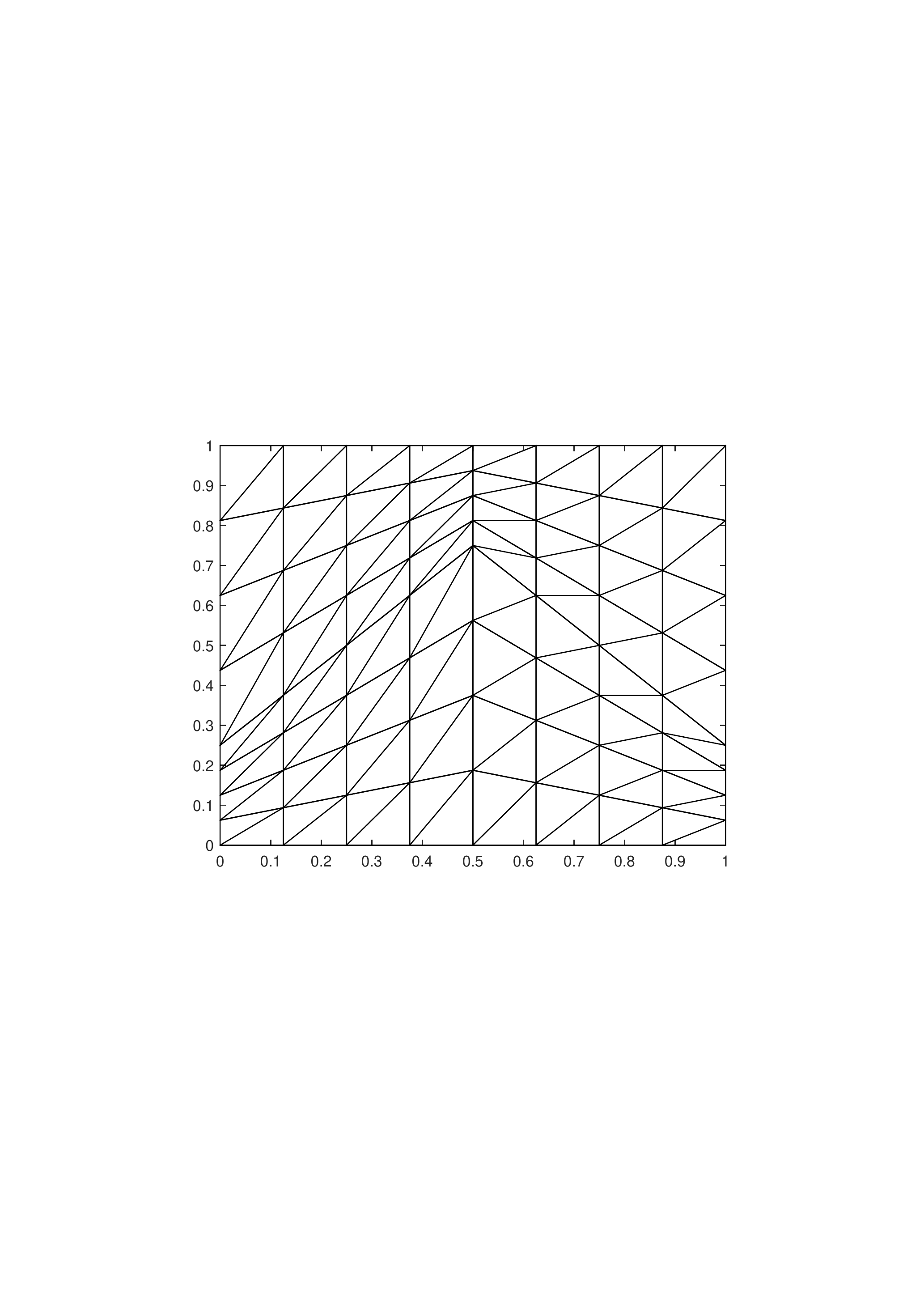}\\
			\vspace{-1.8cm}
		\end{minipage}
	}
	\subfigure[]{
		\begin{minipage}[t]{0.23\linewidth}
			\centering
			\includegraphics[width=1.651in]{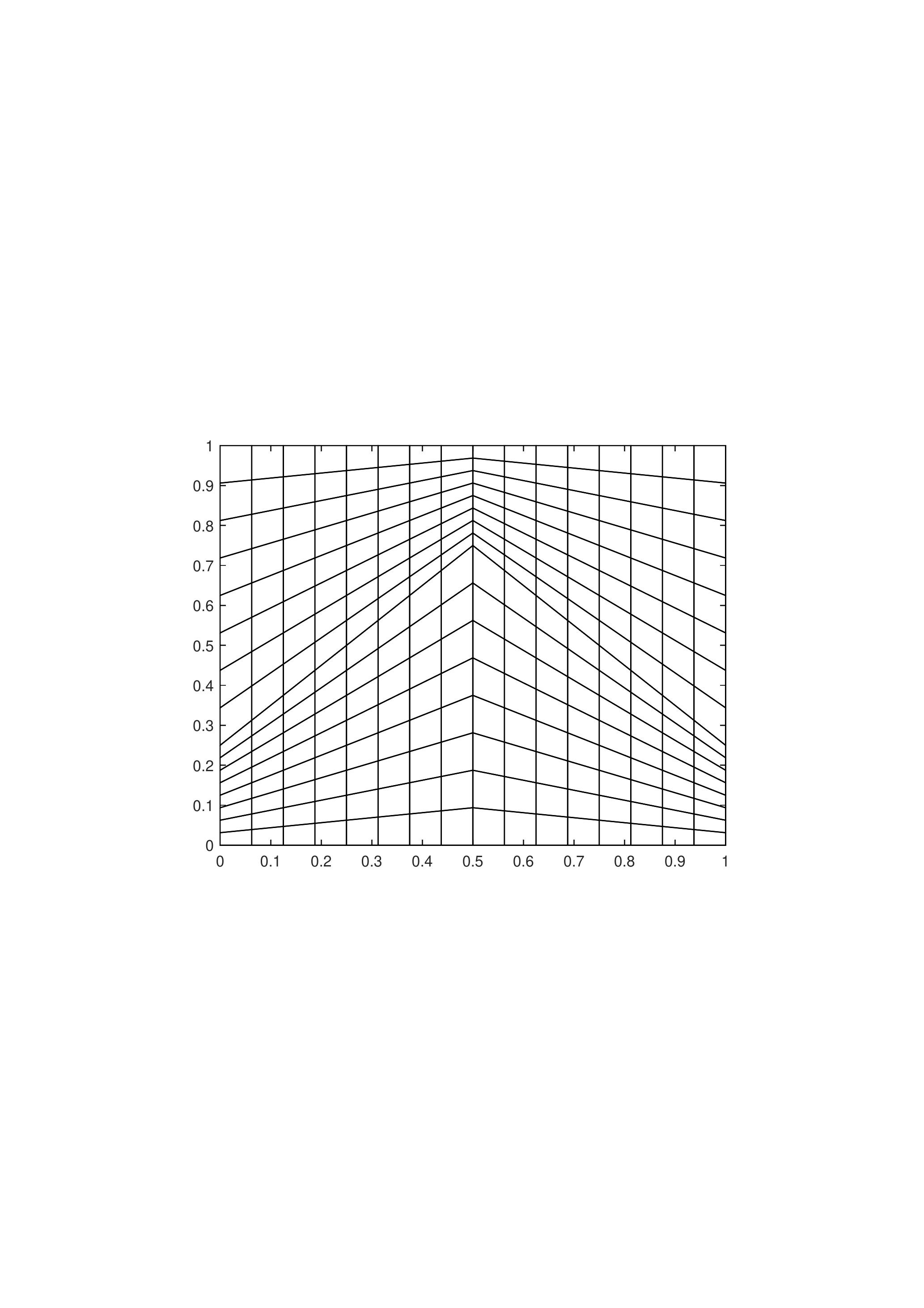}\\
			\vspace{-3.8cm}
			\includegraphics[width=1.651in]{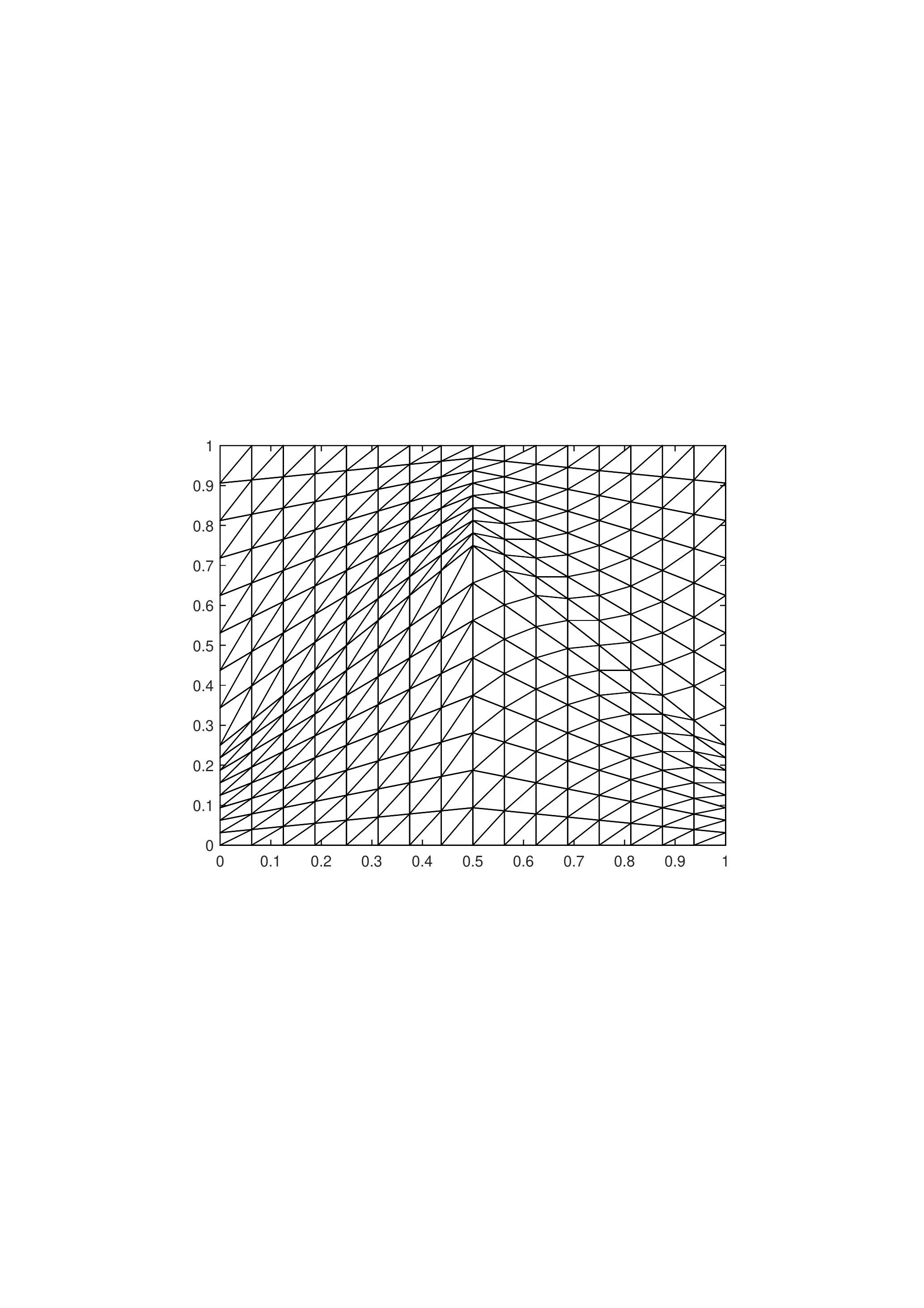}\\
			\vspace{-1.8cm}
		\end{minipage}
	}
	\centering
	\caption{Two different sequences of grids}
	\vspace{-0.3cm}
	\label{fig:Laplaeigshowmesh}
\end{figure}

\begin{table}[htbp]
	\caption{Numerical results for Laplace eigenvalue problem}\label{tab:tab7}
	\begin{center}
		\begin{tabular}{|c|cccccc|}
			\hline
			Size($\mathcal{J}_{h}$) & 
			\multicolumn{2}{c}{On quadrilateral grids} & 
			\quad & 
			\multicolumn{2}{c}{On triangle grids} &
			\quad
			\\ 
			\quad & 
			$\lambda_{h}$ & 
			$|\lambda-\lambda_{h}|$ & 
			\quad & 
			$\lambda_{h}$ & 
			$|\lambda-\lambda_{h}|$ &
			$\lambda$
			\\ \hline
			$8\times8$ & 
			2.035E1 & 
			6.090E-1 & 
			\quad & 
			2.074E1 & 
			9.995E-1 &
			2$\pi^2$
			\\
			$16\times16$ & 
			1.988E1 & 
			1.359E-1 & 
			\quad & 
			1.998E1 & 
			2.424E-1 &
			2$\pi^2$
			\\
			$32\times32$ & 
			1.977E1 & 
			3.210E-2 & 
			\quad & 
			1.980E1 & 
			6.120E-2 &
			2$\pi^2$
			\\
			$64\times64$ & 
			1.9747E1 & 
			7.800E-3 & 
			\quad & 
			1.9754E-1 & 
			1.480E-2 &
			2$\pi^2$
			\\ \hline
			Convergence order & 
			\quad & 
			2 & 
			\quad & 
			\quad & 
			2 &
			\quad
			\\ \hline
		\end{tabular}
	\end{center}
\end{table}

Figure \ref{fig:convergence order on Laplace eigenvalue problem}
reports on approximation results of ${\rm QBL}$ and Courant elements for Laplace eigenvalue equation.
The $x$-axis and the $y$-axis 
represent the logarithm of grid size $h$ and of the error, respectively. 
In this numerical experiment, a clear second-order of convergence is observed and the numerical performance of ${\rm QBL}$ element is better than that of Courant element.
\begin{figure}[htbp]
	\vspace{-7.2cm}
	\centering
	\includegraphics[width=1\linewidth]{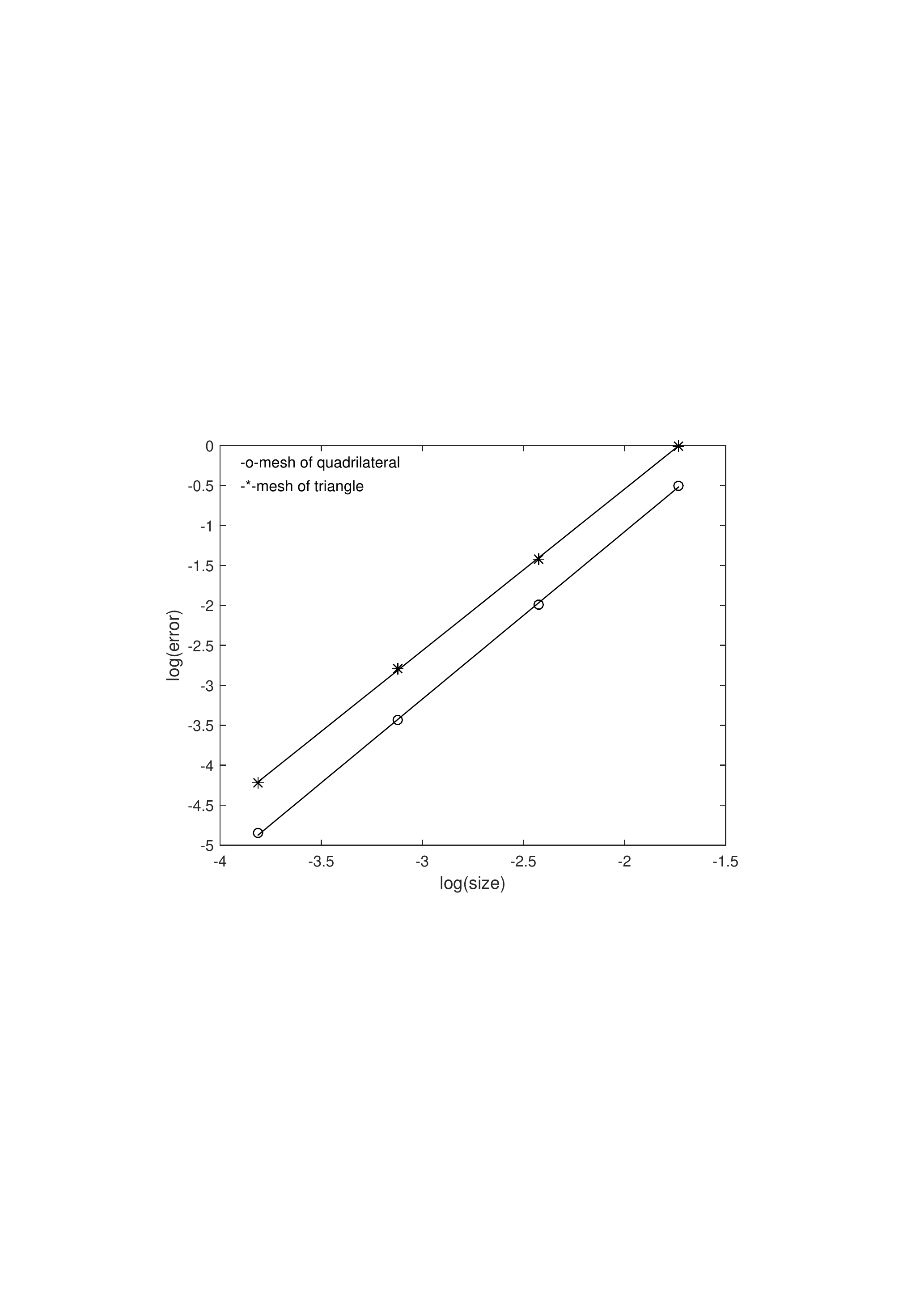}
	\centering
	\vspace{-8cm}
	\caption{The log-log plot of the error of ${\rm QBL}$ and Courant elements for Laplace eigenvalue equation}
	\label{fig:convergence order on Laplace eigenvalue problem}
\end{figure}

\subsection{Application on $H(\mathrm{rot})$ equation}
\label{subsec:application on rot equation}

We consider the problem with homogeneous boundary condition

\begin{equation*}
\left\{
\begin{aligned}
\undertilde{\mathrm{curl}}
&(\mathrm{rot}\usigma)+\usigma=\uf &  
\quad & \mbox{in} \ \Omega \\
&\usigma\ut=0 &  
\quad & \mbox{on} \ \Gamma.
\end{aligned}
\right.
\end{equation*}

The variational formulation is to find 
$\usigma\in H_0({\rm rot},\Omega)$, such that 
\begin{equation}
\int_\Omega {\rm rot}\usigma{\rm rot}\utau+\usigma\utau \ \mathrm{d}x=\int_\Omega \uf\utau\ \mathrm{d}x,\quad\forall\,v\in H_0({\rm rot},\Omega).
\label{equ:variational problem on rot}
\end{equation}
The finite element problem is to find $\usigma{}_h\in V_{h0}^{\rm QRT}$, such that 
\begin{equation}
\sum_{K\in\mathcal{J}_h}\int_K{\rm rot}\usigma{}_h{\rm rot}\utau{}_h+\usigma{}_h\utau{}_h\ \mathrm{d}x=\int_\Omega \uf\utau{}_h \ \mathrm{d}x,\quad\forall\,\utau{}_h\in V_{h0}^{\rm QRT}.
\label{equ:discrete problem on rot}
\end{equation}
\begin{theorem}
	Let $\usigma\in H^1({\rm rot},\Omega)\cap H_0({\rm rot},\Omega)$ and $\usigma{}_h$ be the solutions of \eqref{equ:variational problem on rot}, and \eqref{equ:discrete problem on rot}, respectively. Then
	\begin{equation}
	\|\usigma-\usigma{}_h\|_{{\rm rot},h}\leqslant Ch(|\usigma|_{1,\Omega}+|{\rm rot}\usigma|_{1,\Omega}).
	\end{equation}
\end{theorem}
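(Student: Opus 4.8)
The plan is to treat \eqref{equ:discrete problem on rot} as a nonconforming discretization of \eqref{equ:variational problem on rot} and apply the second Strang lemma. Write $a_h(\usigma,\utau{}_h)=\sum_{K\in\mathcal{J}_h}\int_K \rot\usigma\,\rot\utau{}_h+\usigma\cdot\utau{}_h\,\mathrm{d}x$ for the piecewise bilinear form. The crucial simplification is that $a_h$ is its own energy norm, i.e. $a_h(\utau{}_h,\utau{}_h)=\|\utau{}_h\|_{\rot,h}^2$, so the discrete coercivity constant is exactly one and no discrete inf--sup or Korn-type argument is needed. The Strang estimate then reads
\[
\|\usigma-\usigma{}_h\|_{\rot,h}\leqslant \inf_{\utau{}_h\in V_{h0}^{\rm QRT}}\|\usigma-\utau{}_h\|_{\rot,h}+\sup_{\utau{}_h\in V_{h0}^{\rm QRT}}\frac{|a_h(\usigma,\utau{}_h)-(\uf,\utau{}_h)|}{\|\utau{}_h\|_{\rot,h}},
\]
and it remains to bound the two terms on the right.

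For the approximation term I would take $\utau{}_h=\sqcap_h\usigma$ and invoke the second estimate of Theorem \ref{thm:global intepolation error estimation}, which gives directly $\|\usigma-\sqcap_h\usigma\|_{\rot,h}\leqslant Ch(|\usigma|_{1,\Omega}+|\rot\usigma|_{1,\Omega})$. This already has the shape of the target bound, so the infimum is controlled with no further work.

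The consistency term is where the real content lies, but the preparation has been done in Subsection \ref{subsec:mc of QRT}. Since $\usigma$ solves the strong equation $\undertilde{\mathrm{curl}}(\rot\usigma)+\usigma=\uf$ (legitimately, as $\usigma\in H^1(\rot,\Omega)$ forces $\rot\usigma\in H^1(\Omega)$ and hence $\undertilde{\mathrm{curl}}(\rot\usigma)\in L^2$), substituting this into $(\uf,\utau{}_h)$ cancels the zeroth-order terms, because $\sum_K\int_K\usigma\cdot\utau{}_h=\int_\Omega\usigma\cdot\utau{}_h$. What survives is
\[
a_h(\usigma,\utau{}_h)-(\uf,\utau{}_h)=\sum_{K\in\mathcal{J}_h}\int_K \rot\usigma\,\rot\utau{}_h\,\mathrm{d}x-(\undertilde{\mathrm{curl}}(\rot\usigma),\utau{}_h).
\]
Setting $w=\rot\usigma\in H^1(\Omega)$, the right-hand side is exactly the consistency functional $E(w,\utau{}_h)=(w,\rot_h\utau{}_h)-(\undertilde{\mathrm{curl}}\,w,\utau{}_h)$ defined in Subsection \ref{subsec:mc of QRT}. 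Applying the elementwise Green's formula $\int_K\undertilde{\mathrm{curl}}\,w\cdot\utau{}_h\,\mathrm{d}x=\int_K w\,\rot\utau{}_h\,\mathrm{d}x-\int_{\partial K}w\,(\utau{}_h\cdot\ut{}_e)\,\mathrm{d}s$ collapses $E(w,\utau{}_h)$ into the edge sum that is precisely the decomposition \eqref{equ:consistency functional decomposition}. Consequently Theorem \ref{thm: rot consistency error} applies verbatim and yields $|E(w,\utau{}_h)|\leqslant Ch|\rot\usigma|_{1,\Omega}\|\utau{}_h\|_{\rot,h}$.

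Combining the two bounds through the Strang estimate then gives $\|\usigma-\usigma{}_h\|_{\rot,h}\leqslant Ch(|\usigma|_{1,\Omega}+|\rot\usigma|_{1,\Omega})$, as claimed. The main obstacle is conceptual rather than computational: one must carry out the elementwise integration by parts correctly so that the residual $a_h(\usigma,\utau{}_h)-(\uf,\utau{}_h)$ is recognized as the already-estimated functional $E(\rot\usigma,\utau{}_h)$. The only bookkeeping subtlety is the tangential-edge accounting encapsulated in \eqref{equ:consistency functional decomposition}: on an interior edge the jump of $\utau{}_h\cdot\ut{}_e$ need not vanish, but its edge average $P_e(\utau{}_h\cdot\ut{}_e)$ is continuous, so subtracting it (together with an elementwise constant $c_K$ from $w$) makes the edge contributions consistent, while on boundary edges $P_e(\utau{}_h\cdot\ut{}_e)=0$ for $\utau{}_h\in V_{h0}^{\rm QRT}$ handles the term identically. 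Since all of this is already absorbed into Theorem \ref{thm: rot consistency error}, no genuinely new estimate is required.
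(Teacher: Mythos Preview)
Your argument is correct and is exactly the ``standard technique'' the paper alludes to: Strang's second lemma together with the interpolation estimate of Theorem \ref{thm:global intepolation error estimation} and the consistency bound of Theorem \ref{thm: rot consistency error}. The only detail worth noting explicitly is that $\sqcap_h\usigma\in V_{h0}^{\rm QRT}$ because $\usigma\in H_0(\rot,\Omega)$ forces the boundary edge averages of $\usigma\cdot\ut$ to vanish, which you implicitly use when taking $\utau{}_h=\sqcap_h\usigma$.
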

\begin{proof}
	The theorem is proved by the standard technique.
\end{proof}
\subsubsection{Numerical verification}
We choose the computational domain to be the unit square 
$\Omega = (0,1)\times(0,1)$. 
The source term $\uf$ is chosen such that the exact solution is given by $\usigma=(xy^2-xy,x^2y-xy)^{T}$. Then we test the performance of the ${\rm QRT}$ element on the quadrilateral grids as Subsection \ref{subsec:application on Laplace eigenvalue equation} and the results are recorded in Table \ref{tab:tab8}.

\begin{table}[htbp]
	\caption{Numerical results for $H(\mathrm{rot})$ problem}\label{tab:tab8}
	\begin{center}
		\begin{tabular}{|c|ccc|}
			\hline
			Size($\mathcal{J}_{h}$) & 
			$\|\usigma-\usigma{}_{h}\|_{0,\Omega}$ &
			$|\usigma-\usigma{}_{h}|_{\mathrm{rot},h}$ &
			$\|\usigma-\usigma{}_{h}\|_{\mathrm{rot},h}$
			\\ \hline
			$8\times8$ & 
			1.13E-1 &
			5.95E-2 &
			1.28E-1
			\\
			$16\times16$ & 
			5.53E-2 & 
			2.98E-2 & 
			6.28E-2 
			\\
			$32\times32$ & 
			2.78E-2 & 
			1.49E-2 & 
			3.15E-2
			\\
			$64\times64$ & 
			1.40E-2 & 
			7.45E-3 & 
			1.58E-2 
			\\ \hline
			Convergence order & 
			1 & 
			1 &
			1 
			\\ \hline
		\end{tabular}
	\end{center}
\end{table}

Figure \ref{fig:convergence order on rot problem}
reports on approximation results of ${\rm QRT}$  element for $H(\mathrm{rot})$ equation.
The $x$-axis and the $y$-axis 
represent the logarithm of grid size $h$ and of the error, respectively. 
The error associated with $\|\cdot\|_{0,\Omega}$, 
$|\cdot|_{\mathrm{rot},h}$ and $\|\cdot\|_{\mathrm{rot},h}$
are plotted by dotted line, dashed line and solid line, respectively.
The results confirm our conclusion: a clear first-order of convergence is observed.

\begin{figure}[htbp]
	\vspace{-7.4cm}
	\centering
	\includegraphics[width=1\linewidth]{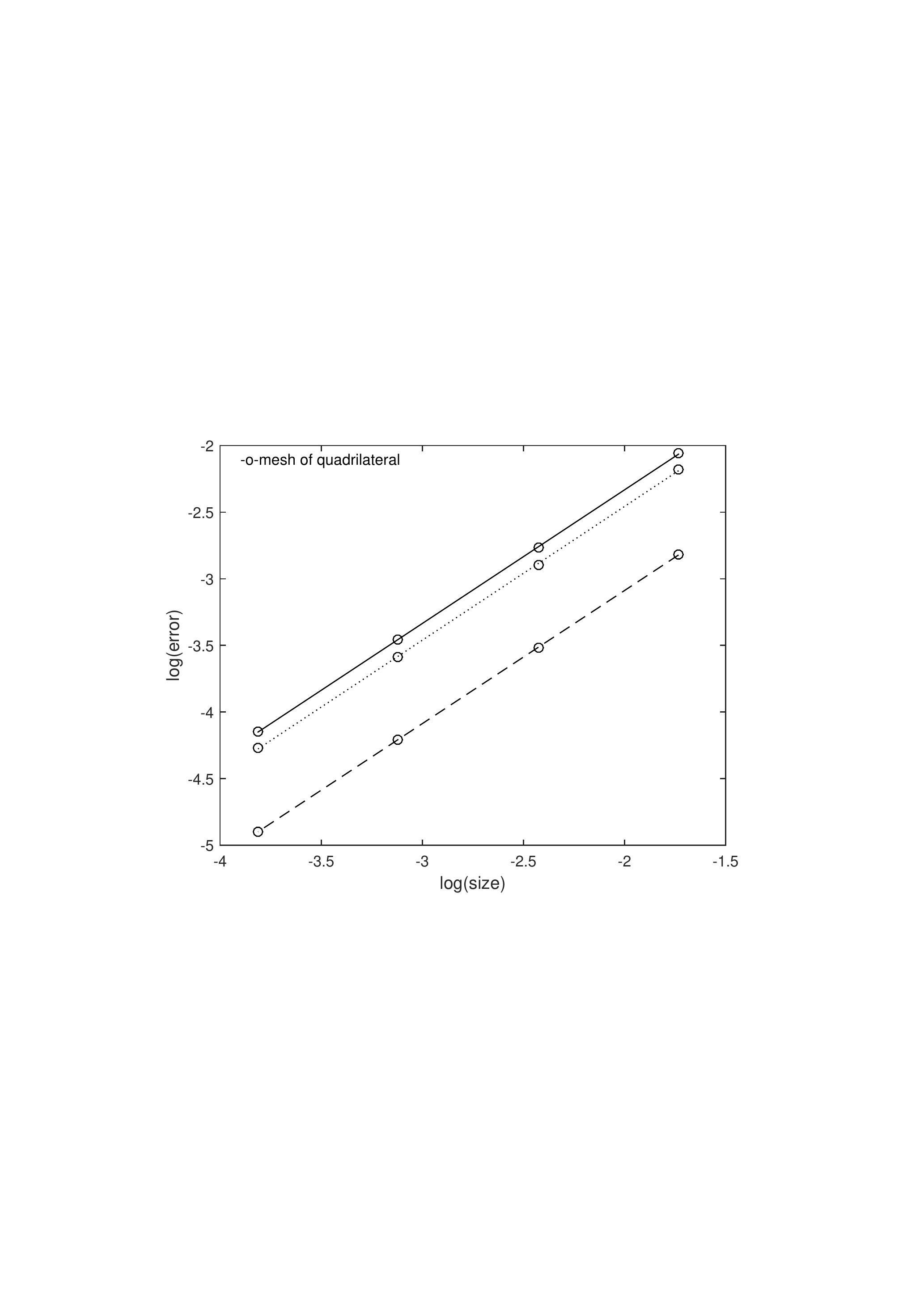}
	\centering
	\vspace{-8cm}
	\caption{The log-log plot of the error of ${\rm QRT}$ for $H(\mathrm{rot})$ equation}
	\label{fig:convergence order on rot problem}
\end{figure}

\section{Concluding remarks}
\label{sec:concluding remarks}
In this paper, we present a polynomial de Rham complex on a grid that consists of arbitrary quadrilaterals by constructing two nonconforming finite elements for the $H^1$ and $H(\rot)$ problems, respectively. As is proved in the present paper, cf. Theorem \ref{thm:nocon} and Remark \ref{rem:nocon}, we can not theoretically expect the finite element be conforming anyway; however, the two spaces are both quasi-conforming and are consistent to the requirement of the differential complex. Moreover, with respect to the $\mathcal{O}(h^2)$ asymptotic parallelogram assumption, the scheme for $H(\rot)$ problem is $\mathcal{O}(h)$ convergent for $H^1(\rot)$ exact solution; namely, this element does not suffer from the extra requirement on the regularity for general nonconforming $H(\curl)$ element (cf. \cite{shi2009low}). For the Poisson equation, numerical experiments show that the {\rm QBL} element plays superior to the triangular linear element with the same amount of unknowns for both source and eigenvalue problems, which confirms the need of quadrilateral elements.
~\\

It follows immediately that a piecewise polynomial complex can be constructed according to
\begin{equation}
\mathbb{R}\xrightarrow{\rm inclusion} H^1  \xrightarrow{\rm curl}  H(\dv) \xrightarrow{\dv} L^2
\end{equation}
by simply a rotation. Further, the methodology which uses the same shape functions as that on a framework of parallelogram and the nodal parameters on a physical cells can be generalized to more complicated cases, such as higher order schemes and higher dimension problems. These would be discussed in future.

\bibliographystyle{plain}
\bibliography{reference}

\end{document}